\documentclass{amsart}

\usepackage{amsfonts}
\usepackage{graphicx}
\usepackage{xcolor}
\usepackage{amsthm}

\usepackage{cite}

\newtheorem{theorem}{Theorem}
\newtheorem{corollary}[theorem]{Corollary}
\newtheorem{lemma}[theorem]{Lemma}
\newtheorem{proposition}[theorem]{Proposition}

\theoremstyle{plain}

\newtheorem{definition}{Definition}

\newtheorem{remark}{Remark}

\numberwithin{equation}{section}

\newcommand{\R}{{\mathbb R}}
\newcommand{\N}{{\mathbb N}}

\renewcommand{\P}{{\mathbb P}}

\newcommand{\cI}{{\mathcal I}}
\newcommand{\cR}{{\mathcal R}}
\newcommand{\cQ}{{\mathcal Q}}

\newcommand{\cA}{{\mathcal A}}
\newcommand{\cW}{{\mathcal W}}

\newcommand{\blambda}{{\boldsymbol{\lambda}}}

\newcommand{\mfS}{\mathfrak{S}}

\DeclareMathOperator{\supp}{supp}

\newcommand{\ee}{{\mathbb E}}
\newcommand{\odim}{\overline{\dim}}
\newcommand{\udim}{\underline{\dim}}

\begin{document}
\title[]{The Attainable almost sure LARGE dimensions}
\author{Kathryn E. Hare}
\address{Dept. of Pure Mathematics, University of Waterloo, Waterloo, Ont.,
Canada, N2L 3G1}
\email{kehare@uwaterloo.ca}
\author{Franklin Mendivil}
\address{Department of Mathematics and Statistics, Acadia University,
Wolfville, N.S. Canada, B4P 2R6}
\email{franklin.mendivil@acadiau.ca}
\thanks{The research of K. Hare is partially supported by NSERC 2016:03719.
The research of F. Mendivil is partially supported by NSERC\ 2019:05237.}
\subjclass[2010]{Primary: 28A80; Secondary 28C15, 60G57}
\keywords{random non-homogeneous Moran sets, 1-variable fractals, random
measures, Assouad dimensions, quasi-Assouad dimensions}

\begin{abstract}
In this paper we study the range of possible almost sure dimensions of random measures arising from a natural model
of random Moran measures.
Specifically, we consider the Assouad-like ``large'' $\Phi$-dimensions of these measures.
These dimensions can be tuned to consider a specific range of depths in scale and so provide refined local geometric
information.
The quasi-Assouad dimension is a well-known and important example of a ``large'' $\Phi$-dimension.

We determine the range of possible almost sure $\Phi$-dimensions for random measures  generated by the model and are supported on any given random Moran set.
We do this for both the case when the probability weights depend on the scaling factors and the case when they do not.
In the later situation, we show that usually there is a ``gap'' between the dimension of the set and that of the  smallest attainable upper dimension and largest attainable lower dimension.
As a consequence of our results, we also determine the a.s. dimensions of the underlying random Moran set.

\end{abstract}

\maketitle

\section{Introduction}

Random constructions have been investigated in the fractal geometry literature for many years
(see \cite{Fa, FMT, FT,  Gr, Ham, HM, Tr0, Tr, Tr2}), with a common focus being the dimensional properties of the resulting random objects.
The random context often helps to illuminate the robustness of the dimension in illustrating how  fluctuations might change or not 
change the dimension from that of a related deterministic model.
In this paper we continue our examination of the possible dimensions of random Moran sets and measures which arise from random iterated function
systems (or 1-variable random models).
The random set and measure are constructed by an iterative Moran construction where the contractions and probability weighting factors are randomly chosen at each step.
As with most such constructions, there are almost sure values for the dimensions of the resulting random set and measure.
The almost sure Hausdorff dimension of the random set was determined in \cite{Ham} (see also \cite{Betc}) and in \cite{Tr0} it was shown that the Hausdorff and upper box dimensions coincide almost surely for these random sets.

The Assouad and Assouad-like dimensions, in contrast with the Hausdorff and box dimensions, are defined to assess the most extreme local scaling behaviour of a set or measure and are thus sensitive to small local changes.
These dimensions all appear in dual ``upper'' and ``lower'' versions, similar to the pairing of the upper and lower box dimensions.
The Assouad dimensions have seen a surge of applications in fractal geometry in recent years (see \cite{BRT23, Fraserbook, FH, FMT, FT, FY, GHM, HH, HM, HM2, HM3, KL, Ol, Tr0, Tr, Tr2}), in part because of their ability to quantify the inhomogeneity of a set.
The most general of these Assouad-like dimensions are the $\Phi$-dimensions, first introduced in \cite{FY} and then extensively studied in \cite{GHM}.
These use a function, $\Phi$, to control the ratio of two scales at which to asymptotically compare the local ``size'' of the set or measure.
By varying $\Phi$ one can get a precise quantification of the extreme local scaling behaviour of the set or measure under consideration.
The Assouad, quasi-Assouad and $\theta$-Assouad spectrum are important examples of $\Phi$-dimensions.
There are ``large'' and ``small'' $\Phi$ dimensions, roughly depending on how separated are the two comparison scales (see the formal definition in the next section); the Assouad dimension is in this sense small, while the quasi-Assouad and $\theta$-Assouad spectrum are large.

Following on the work started in \cite{FMT, Tr0, Tr}, the authors of \cite{HM, HM2, HM3} investigated the $\Phi$-dimensions of these random Moran sets and measures.
In \cite{HM, HM2} they determined formulas for the almost sure dimensions of the random measures under various different scenarios on the model.
The analysis is different for the small and large $\Phi$ functions, as well as for the upper and lower versions of the dimensions.
The same threshold phenomena of small versus large was also found in previous work \cite{GHM,Tr2}.

In this paper we examine the range of attainable large $\Phi$ dimensions of random measures on a given random set, following up on a similar examination for the attainable small $\Phi$ in \cite{HM3}.
More specifically, imagine that we are given a random iterated function system which will produce a random set $F_\blambda$.
Based on this, we now add a model for randomly choosing probability weights at each step of the construction.
This will produce a random probability measure $\mu_\blambda$ supported on $F_\blambda$.
It is known (see \cite{HH}) that the $\Phi$ dimension of $F_\blambda$ bounds that of $\mu_\blambda$, but which of the allowable
$\dim_\Phi \mu_\blambda$ are attainable?
The answer turns out to depend on the details of the underlying random process.
If the choice of probability weights can be dependent on $F_\blambda$ then the entire allowable range is achievable.
On the other hand, if the probability weights are independent of $F_\blambda$ then there is usually a ``gap'' between 
the almost sure values of $\dim_\Phi F_\blambda$ and $\dim_\Phi \mu_\blambda$.
However, the set of attainable values for $\dim_\Phi \mu_\blambda$ is always an interval, contains the minimal (or maximal) value, and
contains values arbitrarily close to $\infty$ (respectively, $0$).

Section \ref{sec:phidim} presents the necessary background on $\Phi$ dimensions and
Section \ref{sec:construction} presents the details of the random model.
Section \ref{sec:G_functions} proves technical results about the machinery necessary to obtain the $\Phi$ dimensions where the main tools
are auxiliary functions $G(\theta)$ (for the upper dimensions) and $G'(\theta)$ (for the lower dimensions).
We prove that the $\Phi$ dimension is an extremal value of $G$ (or $G'$) in Theorem \ref{SupThm} and 
in Theorem \ref{thm:G(t)=t} show that it is the only place where $G$ (or $G'$) crosses the diagonal.
We also give some conditions under which the dimension is a continuous
function of the parameters of the model in Propositions \ref{prop:finite_GContinuous} and \ref{prop:single_continuous}.

The main results of the paper are given in Sections \ref{sec:range} and \ref{sec:independentrange} which study the set of attainable a.s. values for $\udim_\Phi \mu_\blambda$ and $\odim_\Phi \mu_\blambda$.
Section \ref{sec:range} concentrates on models where the probability weights are allowed to depend on the random scaling factors (the ``dependent'' case) and here 
Theorems \ref{thm:upper_dependent} and \ref{thm:lower_dependent} indicate that the entire possible range is attainable.
Specifically, with $D$ the almost sure Hausdorff dimension of $F_\blambda$, we have that $\udim_\Phi \mu_\blambda$ spans $(0,D]$ and $\odim_\Phi \mu_\blambda$ spans $[D,\infty)$.
As a consequence, we get that $\udim_\Phi F_\blambda = \dim_H F_\blambda = \odim_\Phi F_\blambda$ almost surely, as recorded in Corollary \ref{cor:asdimF}.

By contrast, in Section \ref{sec:independentrange} we show that when the probability weights are independent of $F_\blambda$ (the ``independent case''), there is generally a gap between $D$ and the attainable dimensions. 
Theorem \ref{thm:general_gap_condition} gives a sufficient condition for this to occur and Corollary \ref{cor:gap_finite} is a simplified version
for when the system is finite.
Additionally, Propositions \ref{prop:min_single} and \ref{prop:max_single} state that when the probabilities are independent of $F_\blambda$, both $\inf \odim_\Phi \mu_\blambda$ and $\sup \udim_\Phi \mu_\blambda$ are achieved by using single sets of probability weights.
Furthermore, the sets of achievable $\udim_\Phi \mu_\blambda$ and $\odim_\Phi \mu_\blambda$ are intervals and fully attainable by single sets of probability weights, see Theorem \ref{thm:singlefull}.
Finally, in Section \ref{sec:algorithm} we specialize to the situation where there are finitely many sets of IFS each with only two similarities.
We give an algorithm to determine the minimal possible value for $\odim_\Phi \mu_\blambda$ and also give an explicit formula for this value
when there are only two IFS.

We mention that all of our results are given in the context of subsets of $[0,1]$ for convenience and simplicity.
This restriction is not necessary and it is simple to extend the results to compact subsets of $\R^n$.

\section{Assouad-like Dimensions: Definitions and Basic Properties}

\label{sec:phidim}

Let $X$ be a bounded metric space. We will write$\ B(x,R)$ for the open ball
centred at $x$ belonging to the bounded metric space $X$ and radius $R$, and
denote by ${\mathcal{N}}_{r}(E)$ the least number of open balls of radius $r$
required to cover $E\subseteq X$. By a measure on $X$ we mean a Borel
probability measure supported on $X$.

\begin{definition} \label{def:phi}
A \textbf{dimension function} is a map $\Phi :(0,1)\rightarrow \mathbb{R}^{+}$ with the property that 
$t^{1+\Phi (t)}$ decreases to $0$ as $t$ decreases to $0$ and is doubling near zero, that is, there are $T,C > 0$
so that $(2 t)^{1 + \Phi(2 t)} \le C \cdot t^{1 + \Phi(t)}$ for all $0 < t < T$.
 We will say that a dimension function $\Phi$ is \textbf{large} if 
\begin{equation*}
    \Phi (t)=H(t)\frac{\log \left| \log t\right|}{\left| \log t\right|}
\end{equation*}
where $H(t)\rightarrow \infty$ as $t\rightarrow 0$ and \textbf{small} if
(with the same notation) $H(t)\rightarrow 0$ as $t\rightarrow 0$.
\end{definition}

Examples of large dimension functions include the constant functions $\Phi(t)=c>0$, as well as the functions 
$1/|\log t|^{\alpha}$ and $(\log\left| \log t\right|)^{1/\alpha}/\left| \log t\right|$
for $\alpha <1$. The function $\Phi =0$ and the functions $1/|\log
t|^{\alpha}$ and $(\log \left| \log t\right|)^{1/\alpha
}/\left| \log t\right| $ for $\alpha >1$ are some examples of small
dimension functions. We are primarily interested in large dimension
functions in this paper.
The doubling assumption is met, for instance, if $\Phi$ is non-decreasing and bounded near $t=0$.

\begin{definition}
Let $\Phi$ be a dimension function.

(i) The \textbf{upper} and \textbf{lower} $\Phi$\textbf{-dimensions} of
the set $E\subseteq X$ are given, respectively, by 
\begin{equation*}
          \odim_{\Phi}E=   \inf \left\{ \begin{array}{c}
              d:(\exists C_{1},C_{2}>0)(\forall 0<r\leq R^{1+\Phi (R)}<R<C_{1})\text{} \\ 
           {\mathcal{N}}_r(B(z,R)\bigcap E)\leq C_{2}\left( \frac{R}{r}\right)^{d}\text{ }\forall z\in E\end{array} \right\}
\end{equation*}
and
\begin{equation*}
   \udim_{\Phi}E=\sup \left\{ \begin{array}{c}
           d:(\exists C_{1},C_{2}>0)(\forall 0<r\leq R^{1+\Phi (R)}<R<C_{1})\text{} \\ 
         {\mathcal{N}}_r(B(z,R)\bigcap E)\geq C_{2}\left( \frac{R}{r}\right)^{d}\text{ }\forall z\in E
       \end{array}
   \right\}.
\end{equation*}

(ii) The \textbf{upper and lower} $\Phi$\textbf{-dimensions} of the measure $\mu$ on $X$ are given, respectively, by
\begin{equation*}
  \odim_{\Phi}\mu =\inf \left\{ \begin{array}{c}
            d:(\exists C_{1},C_{2}>0)(\forall 0<r<R^{1+\Phi (R)}\leq R\leq C_{1}) \\ 
         \frac{\mu (B(x,R))}{\mu (B(x,r))}\leq C_{2}\left( \frac{R}{r}\right)^{d}\text{ }\forall x\in \supp\mu
      \end{array}
   \right\}
\end{equation*}
and 
\begin{equation*}
   \udim_{\Phi}\mu =\sup \left\{ \begin{array}{c}
     d:(\exists C_{1},C_{2}>0)(\forall 0<r<R^{1+\Phi (R)}\leq R\leq C_{1}) \\ 
    \frac{\mu (B(x,R))}{\mu (B(x,r))}\geq C_{2}\left( \frac{R}{r}\right)^{d}\text{ }\forall x\in \supp\mu
   \end{array}
\right\}.
\end{equation*}
\end{definition}

In the special case of the small dimension function $\Phi =0$, the upper and lower $\Phi$-dimensions are known as the \textbf{upper} and 
\textbf{lower Assouad dimensions} (or the Assouad dimension and lower dimension) and
denoted $\dim_{A}\Theta $ and $\dim_{L}\Theta $ respectively, for sets or measures $\Theta$. 
The Assouad dimensions of sets were first introduced in \cite{A2} and have been extensively studied since. 
We refer the reader to Fraser's book, \cite{Fraserbook}, and the many references cited there-in. 
The Assouad dimensions of measures are also known as the upper and lower regularity dimensions and were studied in \cite{FH,KL, KLV}. 
The Assouad dimensions are important both because of their extremal properties (see the Proposition below), as well as the fact that a measure $\mu$ is doubling if and only if $\dim_{A}\mu <\infty$ (\cite{FH}) and uniformly perfect if and only if $\dim_{L}\mu >0$ (\cite{KL}).
We warn the reader that some sources (like \cite{BRT23}) use a slightly different definition and instead require that $r = R^{1+\Phi(R)}$, which
leads to subtle differences.

The $\Phi$-dimensions of sets were introduced by Fraser and Yu in \cite{FY} who focussed on the \textbf{upper} and 
\textbf{lower} $\theta$\textbf{-Assouad spectrum} which are essentially the special case of $\Phi$
dimensions arising from the large dimension functions $\Phi_{\theta}=1/\theta -1$ for $0<\theta <1$. 
From their limiting behaviour one obtains the \textbf{upper} and \textbf{lower quasi-Assouad dimensions} of sets,
introduced in \cite{LX} and defined as
\begin{equation*}
   \dim_{qA}E=\lim_{\theta \rightarrow 1}\odim_{\Phi_{\theta}}E 
        \text{ and }
   \dim_{qL}E=\lim_{\theta \rightarrow 1}\udim_{\Phi_{\theta}}E.
\end{equation*}
General $\Phi$-dimensions for sets were first extensively investigated in \cite{GHM}.

Motivated by these dimensions for sets, the upper and lower quasi-Assouad dimensions of measures (defined analogously) were studied in \cite{HHT, HT} and the most general $\Phi$-dimensions of measures were introduced in \cite{HH}.

The `large' versus `small' dimension dichotomy corresponds to the threshold phenomena which is observed in the almost sure values of the $\Phi$ dimensions of certain random Moran sets and measures in \cite{HM2}, and as well as in other random iid constructions found in \cite{GHM2} and \cite{Tr2}.

One obvious property of the $\Phi$ dimensions is their ordering.

\begin{proposition}
If $\Phi (t) \leq \Psi (t)$ for all $t>0$, then whether $\Theta $ is a set or measure
\begin{equation*}
    \odim_{\Psi}\Theta \leq \odim_{\Phi}\Theta \ \text{ and }\  \udim_{\Phi}\Theta \leq \udim_{\Psi}\Theta.
\end{equation*}
\end{proposition}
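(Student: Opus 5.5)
The plan is a direct comparison of the admissible scale pairs in the two definitions. No earlier result beyond the definitions is needed.

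First note that for $0<R<1$ we have $\log R<0$. Hence $\Phi(R)\le\Psi(R)$ gives $R^{1+\Psi(R)}\le R^{1+\Phi(R)}$. Consequently every pair $(r,R)$ with $0<r\le R^{1+\Psi(R)}<R<C_1$ also satisfies $0<r\le R^{1+\Phi(R)}<R<C_1$. The same holds with the strict/non-strict inequalities used in the measure definitions, namely $r<R^{1+\Psi(R)}\le R$ implies $r<R^{1+\Phi(R)}\le R$. In other words, the set of admissible pairs for $\Psi$ is contained in the set of admissible pairs for $\Phi$, at least once $C_1<1$; we may always shrink $C_1$ below $1$, since this only weakens the requirement.

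Now take the upper dimension, of a set or of a measure. Let $d$ belong to the set whose infimum defines $\odim_\Phi\Theta$, with constants $C_1,C_2$. Then the required bound ($\mathcal N_r(B(z,R)\cap E)\le C_2(R/r)^d$, or the analogous bound on the measure ratio) holds for all $\Phi$-admissible pairs. In particular it holds for all $\Psi$-admissible pairs, with the same $C_1$ (replaced by $\min(C_1,1/2)$ if needed) and the same $C_2$. So $d$ belongs to the set defining $\odim_\Psi\Theta$. The $\Phi$-set is therefore contained in the $\Psi$-set, and taking infima gives $\odim_\Psi\Theta\le\odim_\Phi\Theta$.

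The lower dimensions work the same way. Any $d$ in the supremum set for $\udim_\Phi\Theta$ satisfies the lower bound on all $\Phi$-admissible pairs, hence on all $\Psi$-admissible pairs. So it lies in the supremum set for $\udim_\Psi\Theta$, which gives $\udim_\Phi\Theta\le\udim_\Psi\Theta$.

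There is no real obstacle. The only points needing care are the sign of $\log R$, which is why we restrict to $R<1$ via the choice of $C_1$, and the convention $\inf\emptyset=\infty$, $\sup\emptyset=0$, which makes the inclusions still give the inequalities when a set is empty.
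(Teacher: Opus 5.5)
Your proof is correct and is exactly the argument the paper intends: the paper gives no proof, simply introducing this as an obvious property, and the inclusion of $\Psi$-admissible pairs $(r,R)$ into $\Phi$-admissible ones for $R<1$ is what makes it obvious. One point beyond the ones you list: for sets, the strict inequality $R^{1+\Phi(R)}<R$ in the definition requires $\Phi(R)>0$, so you are using that $\Phi$ takes strictly positive values (for $\Phi\equiv 0$ the set definition as literally written would be vacuous, which is a defect of the definition rather than of your argument).
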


Other well known relationships between these dimensions are stated below.
Proofs can be found in \cite{FY, GHM, HH} and the references cited therein.

\begin{proposition}
\label{prop:dimensionproperties} 
Let $E$ be a set, $\mu$ be a measure and $\Phi$ be any dimension function. 
We denote by $\udim_{B}$ ($\odim_{B}$) the lower (upper) box dimension, by $\dim_{H}$ the Hausdorff dimension
 and by $\odim_{M}$ the upper Minkowski dimension.

(i) Then
\begin{equation*}
   \dim_{L}E\leq \udim_{\Phi}E\leq \udim_{B}E\leq \odim_{B}E\leq \odim_{\Phi}E\leq \dim_{A}E,
\end{equation*}
  and 
\begin{eqnarray*}
       \dim_{L}\mu &\leq &\udim_{\Phi}\mu \leq \dim_{H}\mu \leq \dim_{H}\supp\mu \\
                    &\leq &\odim_{\Phi}\ \supp\mu \leq \odim_{\Phi}\mu \leq \dim_{A}\mu.
\end{eqnarray*}

(ii) If $\mu$ is a doubling measure, then 
\begin{equation*}
        \udim_{\Phi}\mu \leq \udim_{\Phi}\supp\mu.
\end{equation*}

(iii) If $\Theta$ is either a set or measure and $\lim_{t\rightarrow 0}$ $\Phi (t)=0$, then 
\begin{equation*}
    \udim_{\Phi}\Theta \leq \dim_{qL}\Theta \ \text{ and }\ \dim_{qA}\Theta \leq \odim_{\Phi}\Theta.
\end{equation*}

(iv) If $\Phi (t)\rightarrow \infty$ as $t\rightarrow 0$, then $\odim_{B}E=\odim_{\Phi}E$ and $\odim_{\Phi}\mu =\odim_{M}\mu$.
\end{proposition}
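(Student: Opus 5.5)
This is a collection of largely standard comparisons, so the plan is to check each item directly from the definitions, handling the set and measure versions in parallel.

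For (i), the set chain comes from choosing scales. For $\odim_B E \le \odim_\Phi E$, fix $R=C_1/2$ and let $r\to 0$. Then $\mathcal N_r(E)\le \mathcal N_R(E)\max_z \mathcal N_r(B(z,R)\cap E)\lesssim (R/r)^d$, so $\odim_B E\le d$. The lower box bound for $\udim_\Phi$ is dual. The comparisons with $\dim_L,\dim_A$ hold because the Assouad conditions quantify over all $r<R$, a larger family than $r\le R^{1+\Phi(R)}$.

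For the measure chain, take $R$ fixed. The bound $\mu(B(x,r))\ge C\mu(B(x,R)) (r/R)^{d}$ for all small $r$ gives $\underline{\dim}_{loc}\mu(x)\le d$ everywhere on the support. Hence $\dim_H\mu\le \odim_\Phi\mu$, and the dual argument gives $\udim_\Phi\mu\le\dim_H\mu$. Next, $\dim_H\mu\le\dim_H\supp\mu\le\odim_B\supp\mu\le\odim_\Phi\supp\mu$ by the set chain. For $\odim_\Phi\supp\mu\le\odim_\Phi\mu$, I will use the standard packing argument. Take a maximal $r$-separated set in $B(z,R)\cap\supp\mu$. The disjoint balls $B(x_i,r/2)$ lie in $B(z,2R)$, and each has measure at least $C^{-1}(r/R)^d\mu(B(x_i,2R))\ge C^{-1}(r/R)^d\mu(B(z,R))$. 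So the count is $\lesssim (R/r)^d$, after adjusting constants via the doubling of $t^{1+\Phi(t)}$.

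For (ii), the dual packing argument applies. If $\mu$ is doubling, cover $B(z,R)$ by $N$ balls of radius $r$ centred in the support. Then $\mu(B(z,R))\le \sum\mu(B(x_i,r))$, and each $\mu(B(x_i,r))\le C(r/R)^{d}\mu(B(x_i,R))\le C'(r/R)^d\mu(B(z,R))$ by doubling. This gives $N\gtrsim (R/r)^d$.

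For (iii), note that $\Phi\to0$ means that for each $\theta<1$, eventually $\Phi(t)\le 1/\theta-1$. Apply the monotonicity proposition near $0$; only small $t$ matter after shrinking $C_1$. This gives $\odim_{\Phi_\theta}\le\odim_\Phi$, and we then let $\theta\to1$.

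Item (iv) is the main obstacle. When $\Phi\to\infty$, $r\le R^{1+\Phi(R)}$ forces $\log(R/r)\gg|\log R|$. So the factor $\mathcal N_r(B(z,R)\cap E)\le\mathcal N_r(E)\le C_\epsilon r^{-\odim_B E-\epsilon}$ is at most $(R/r)^{\odim_B E+2\epsilon}$. Indeed $r^{-s}=(R/r)^s R^{-s}$, and $R^{-s}\le (R/r)^{\epsilon}$ once $|\log R|\cdot s\le\epsilon\log(R/r)$, which holds for small $R$. Combined with (i), this gives equality for sets. The measure analogue uses the upper Minkowski dimension of $\mu$, i.e.\ the infimum of $d$ with $\mu(B(x,r))\gtrsim r^d$. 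One gets $\mu(B(x,R))/\mu(B(x,r))\le 1/\mu(B(x,r))\lesssim r^{-d}$, absorbing $R^{-d}$ in the same way. Conversely, fixing $R$ in the $\Phi$-bound recovers $\mu(B(x,r))\gtrsim \mu(B(x,R))\,r^{d}$. I expect the delicate point to be the uniform lower bound on $\mu(B(x,R))$ at the fixed scale $R$ over $x\in\supp\mu$. I would try a compactness argument: a finite cover by balls $B(y_j,R/2)$ of positive measure gives $\mu(B(x,R))\ge\min_j\mu(B(y_j,R/2))>0$.
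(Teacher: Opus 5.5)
The paper gives no proof of this proposition; it refers to \cite{FY, GHM, HH}. Your item-by-item check from the definitions follows the standard route of those sources. Parts (ii), (iii) and (iv) are fine as you describe them, and so is most of (i). This includes the compactness argument in (iv) giving $\inf_{x\in\supp\mu}\mu(B(x,R))>0$, which is valid here because supports are compact subsets of $[0,1]$.

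One step in (i) fails as written: the packing argument for $\odim_\Phi\supp\mu\le\odim_\Phi\mu$. You bound each $\mu(B(x_i,r/2))$ below by a multiple of $(r/R)^d\mu(B(z,R))$. But the disjoint balls $B(x_i,r/2)$ are only known to lie in $B(z,2R)$, so summing gives
\[
\#\{x_i\}\lesssim \Big(\frac{R}{r}\Big)^d\,\frac{\mu(B(z,2R))}{\mu(B(z,R))}.
\]
The ratio on the right is bounded only if $\mu$ is doubling. That is the hypothesis of (ii), not of (i). The $\Phi$-condition cannot supply it either, since it only involves scale pairs with $r<R^{1+\Phi(R)}$ and says nothing about comparable radii such as $R$ and $2R$.

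The repair is to compare with a ball that contains all the small balls. Since $d(x_i,z)<R$, we have $B(x_i,3R)\supseteq B(z,2R)$. Because $t\mapsto t^{1+\Phi(t)}$ is nondecreasing, $r/2<r\le R^{1+\Phi(R)}\le (3R)^{1+\Phi(3R)}$. So for $3R\le C_1$,
\[
\mu(B(x_i,r/2))\ \ge\ C_2^{-1}\Big(\frac{r}{6R}\Big)^d\mu(B(x_i,3R))\ \ge\ C_2^{-1}6^{-d}\Big(\frac{r}{R}\Big)^d\mu(B(z,2R)).
\]
Summing over the disjoint balls inside $B(z,2R)$ cancels $\mu(B(z,2R))>0$, which is positive because $z\in\supp\mu$. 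This gives $\mathcal{N}_r(B(z,R)\cap\supp\mu)\le\#\{x_i\}\le C_2\,6^d (R/r)^d$ with no doubling assumption. Note that only monotonicity of $t^{1+\Phi(t)}$ is used here, not its doubling property.

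A smaller point concerns (ii). Centring the covering balls in the support costs a factor $2$ in the radius, so your argument directly controls only $r<R^{1+\Phi(R)}/2$. The remaining range $R^{1+\Phi(R)}/2\le r\le R^{1+\Phi(R)}$ is covered by $\mathcal{N}_{r/4}(A)\lesssim\mathcal{N}_r(A)$, which holds because $\R$ is a doubling metric space.
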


In general, the inequalities above can be strict.
Furthermore, part (iv) of the proposition is not true for the lower dimension as can be seen by the following example.  Take $E = C \cup [2,3]$ with $C \subset [0,1]$ the standard $1/3$ Cantor set.
Then $\udim_B E = 1$ but $\udim_\Phi E = \ln(2)/\ln(3)$.

Another interesting fact (see \cite{HH}) is that given any $0<\alpha <\beta<1$, there is a central Cantor set $E\subset [0,1]$ such that if $\mu$ is the uniform Cantor measure supported on $E$, then
\begin{equation*}
   \left\{ \odim_{\Phi}E:\lim_{x\rightarrow 0}\Phi =0\right\}
       =\left\{ \odim_{\Phi}\mu :\lim_{x\rightarrow 0}\Phi =0\right\}
             =[\alpha ,\beta ]=[\dim_{qA}E,\dim_{A}E].
\end{equation*}

The `large' versus `small' dimension threshold is of interest because in
several random iid constructions (see \cite{GHM2, HM2, Tr2}) it has been
observed that the almost sure values of the large $\Phi$ dimensions
coincide with the quasi-Assouad dimensions while the almost sure small $\Phi$ dimensions coincide with the Assouad dimensions.

\section{The random Moran sets and measures}

\label{sec:construction}

\subsection{Description of the Model}

The model for the random Moran sets and measures that we study in this paper
is the same as that studied in \cite{HM, HM2}. 
We outline the description here for the convenience of the reader.

Fix the integer $K \ge 2$ and constants $A,B, \tau > 0$ with $0 < K \cdot A \le B < 1$ and $\tau \le (1-B)/K$.

We start with a collection $\cI_u = \{ S_1(u), S_2(u), \ldots, S_K(u) \}$, indexed by $u \in \cA$, of IFSs consisting of $K$ similarities
acting on $[0,1]$.
We denote the scaling ratio of $S_i(u)$ by $a(i,u)$ and assume that for all $u \in \cA$
\begin{equation} \label{eq:scaling_constraints}
     A \le \min_{1\le i \le K} a(i,u) < \sum_{i=1}^K a(i,u) \le B.
\end{equation}
We further assume that 
\begin{equation*}
     dist(S_{i}(u)[0,1],\,S_{j}(u)[0,1])\geq \tau \text{ for all } i\neq j \text{ and all } u,
\end{equation*}%
so each $\cI_u$ is a strongly separated IFS.

Next, for each $v \in \cW$, we have a probability weight vector with components $p(i,v)$ for $i=1,2,\ldots, K$. 
For each $v \in \cW$ these satisfy
\begin{equation} \label{eq:probability_constraints}
  \sum_{i=1}^K p(i,v) = 1  \quad \text{ and } \quad p(i,v) \ge 0 \text{ for all } i=1,2,\ldots, K.
\end{equation}
The set $\cA$ indexes all the IFSs that could potentially be used in our model and $\cW$ all the probability weights that could potentially be used.

\medskip

We set $\Lambda = \cA \times \cW$ and place a probability measure $\P$ on $\Lambda$.
For simplicity, for $\lambda = (u,v) \in \Lambda$ we will often write $p(i,\lambda)$ for $p(i,v)$ and $a(i,\lambda)$ instead of $a(i,u)$.

\medskip

For technical reasons we require that there be some $t > 0$ so that $\ee_\lambda( p(i,\lambda)^{-t}) < \infty$ for each $i=1,2,\ldots, K$.
In particular, this guarantees that the probability that any $p(i,\lambda) = 0$ or $1$ is zero.
In the special case when $\cW$ is a single point  (we call this the \emph{single probability} case) the
requirement simply says $p(j) \neq 0$ for any $j=1,...,K$,  and we refer to these probabilities as the \emph{valid probabilities} in this case.

Our random Moran sets and measures are constructed through an iid sequence 
of choices of scaling ratios and probability weights,
meaning that $\blambda= (\blambda_{n})\in \Lambda^\N$ is chosen according to the infinite product measure $\P^{\mathbb{N}}$. 
This ensures that all pairs $\{a(i,\blambda_{n}),p(j,\blambda_{n})\}$ are independent of $\{a(k,{\blambda}_{m}),p(l,{\blambda}_{m})\}$ for $n\neq m$.

\subsubsection{Dependent/Independent probabilities}

A natural and particularly interesting example of our setup is when $\Lambda$ 
is a product of probability spaces, that is $\P$ is a product of probability measures on $\cA$ and $\cW$ respectively.
Then the contraction factors, which depend only on $u$, and the probabilities, which depend only on 
$v$, are independent of each other. 
We refer to this special situation as the \emph{independent probabilities} case to distinguish it from the 
\emph{dependent probabilities} case. 
As we will see later in this paper, the dimensional properties of the random measures (defined below) are quite
different in the two cases.

\subsubsection{Random Moran set}

Given ${\blambda}\in \Lambda^\N$ we put 
\begin{equation*}
      I_{i_{1},...,i_{n}}({\blambda})=S_{i_{1}}(\blambda_{1})\circ \cdots \circ S_{i_{n}}(\blambda_{n})[0,1],
\end{equation*}
a \textit{step (or level)} $n$\textit{\ Moran interval}. 
The step $n+1$ Moran intervals, $I_{i_{1},..i_{n},j}({\blambda})$ for $j=1,...,K$, are called the \textit{children} of the step $n$ Moran interval $I_{i_{1},...,i_{n}}({\blambda})$. 
We note that the length of $I_{i_{1},...,i_{n}}({\blambda})$, denoted 
$\left| I_{i_{1},...,i_{n}}({\blambda})\right|$, is equal to the product $a(i_{1},{\blambda}_{1})\cdots a(i_{n},{\blambda}_{n})$ 
and the distance between any two children of $I_{i_{1},...,i_{n}}({\blambda})$ is at least $\tau \left| I_{i_{1},...,i_{n}}({\blambda})\right|$. 
Corresponding to each $\blambda\in \Lambda^\N$ is the \textit{random Moran set (or attractor)} 
\begin{equation*}
     F_\blambda=\bigcap_{n}\bigcup_{i_{1},...,i_{n}\in \{1,...,K\}}I_{i_{1},...,i_{n}}(\blambda).
\end{equation*}
When ${\blambda}\in \Lambda^\N$ is clear we may write $I_{n}(x)$
for the unique step $n$ Moran interval that contains $x\in F_{\blambda}$. 
We remark that the assumptions on the contraction factors ensure that 
\begin{equation*}
    A^{n}\leq \frac{\left| I_{N}(x)\right|}{\left| I_{N+n}(x)\right|}\leq B^{n}.
\end{equation*}

\subsubsection{Random Measure}

The \textit{random probability measure} $\mu_\blambda$ is almost surely supported on the random set $F_{\mathbf\lambda}$ and is
defined by the rules $\mu_\blambda([0,1])=1$ and if $\nu=(i_{1},...,i_{n})$ then for each $j\in \{1,...,K\}$ 
\begin{equation*}
   \mu_\blambda(I_{\nu ,j}(\blambda))=\mu_\blambda(I_{\nu}(\blambda))p(j,\blambda_{n+1})=
        p(i_{1},\blambda_{1})\cdot \cdot \cdot p(i_{n},\blambda_{n})p(j,\blambda_{n+1}).
\end{equation*}
Notice that the random Moran set $F_\blambda$ depends only on the IFSs $\{\cI_\blambda\}$, while the random measure depends on both the IFSs
and the probabilities $\{\cR_\blambda\}$. 
By varying the probabilities we obtain a family of random measures all of which are supported on $F_\blambda$ almost surely.

For those familiar with V-variable fractals (c.f. [2]), $F_\blambda $ is a 1-variable fractal and $\mu_\blambda$ a 1-variable fractal measure.

\section{Determining the large $\Phi$-dimensions}
\label{sec:G_functions}

\subsection{The basic dimension theorem}

As shown in Lemma 2 of \cite{HM2} (see also arXiv:2207.14654 for more details), the uniform separation satisfied by the Moran sets makes it easy to establish that the $\Phi$ dimensions 
(for both large and small $\Phi$) of both the random set $F_\blambda$ and the random measure $\mu_\blambda$ can be computed
using only the lengths and measures of the Moran intervals.  
We remark that the doubling assumption from Def. \ref{def:phi} is not needed to establish this for the upper dimensions. 
Using this Lemma, a simple formula was found for the $\Phi$ dimensions of the random measures for small dimension functions $\Phi$ in \cite[Theorem 21]{HM2}.

The problem of determining the $\Phi$ dimensions for large $\Phi$ is much more intricate, however, because of the depth of scales which must be considered. 
In \cite{HM2} it was shown that we can understand these dimensions in terms of properties of functions, denoted $G$ and $G'$, which are
ratios of expected values of logarithms of probabilities to logarithms of scaling ratios. 
As we make extensive use of the functions $G$ and $G'$ in this paper, we state their definitions here.

\textbf{Notation:} Given our model as described above, we define the following functions with common 
domain $(\theta,\lambda) \in [0,\infty) \times \Lambda$
\begin{align*}
   Y(\theta ,\lambda) &=\log p(m,\lambda), & \qquad Y'(\theta ,\lambda) &=\log p(m',\lambda) \\
    Z(\theta ,\lambda) &=\log a(m,\lambda),& \qquad Z'(\theta ,\lambda)  &=\log a(m',\lambda)
\end{align*}%
where $m=m(\theta ,\lambda) \in \{1,2,\ldots, K \}$ is the minimal index which satisfies the condition 
\begin{equation}  \label{eq:m_definition}
   \frac{a(m,\lambda)^{\theta}}{p(m,\lambda)}=
         \max \left( \frac{a(j,\lambda)^{\theta}}{p(j,\lambda)}:j=1,...,K\right) 
\end{equation}
and $m'=m'(\theta ,\lambda)$ is the minimal index which satisfies 
\begin{equation} \label{eq:mprime_definition}
   \frac{a(m',\lambda)^{\theta}}{p(m',\lambda)}=
   \min\left( \frac{a(j,\lambda)^{\theta}}{p(j,\lambda)}:j=1,...,K\right).
\end{equation}
Then define 
\begin{equation} \label{eq:G_definition}
    G(\theta)=\frac{\ee_\lambda(Y(\theta ,\lambda))}{\ee_\lambda(Z(\theta ,\lambda))}
          \quad \text{ and } \quad
   G'(\theta)=
       \frac{\ee_\lambda(Y'(\theta ,\lambda))}{\ee_\lambda(Z'(\theta ,\lambda))}
\end{equation}
where the notation $\ee_\lambda$ means that the expectation is taken with respect to  $\lambda \in \Lambda$ using the measure $\P$. 
However, as we will only be taking expectations with respect to $\lambda$ we will usually omit the subscript.
Note that with $\blambda \in \Lambda^\N$, each family, $\{Y(\theta ,\blambda_n)\}$, 
$\{Z(\theta ,\blambda_n)\}$, $\{Y'(\theta ,\blambda_n)\}$ and $\{Z'(\theta ,\blambda_n)\}$, is an independent and identically distributed sequence of random variables.

Here is simple fact about these functions.

\begin{proposition}
\label{GBded}
Assume the random model as given above.

(i) All four quantities $\left| \ee_\lambda(Y(\theta,\lambda))\right| $, $\left| \ee_\lambda(Y'(\theta,\lambda))\right| $, $\left| \ee_\lambda(Z(\theta,\lambda))\right|$ and 
$\left| \ee_\lambda(Z'(\theta,\lambda))\right|$ are bounded above and bounded away from $0$.

(ii) Both $G(\theta)$ and $G'(\theta)$ are bounded above and bounded away from $0$.
\end{proposition}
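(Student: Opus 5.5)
The plan is to bound the four expectations directly and uniformly in $\theta$, using only the pointwise constraints \eqref{eq:scaling_constraints} and \eqref{eq:probability_constraints} together with the moment assumption $\ee_\lambda(p(i,\lambda)^{-t})<\infty$. Part (ii) then follows by taking ratios.

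First I will note measurability. For each fixed $\theta$, the indices $m(\theta,\lambda)$ and $m'(\theta,\lambda)$ are determined by finitely many comparisons of the measurable functions $a(j,\lambda)^\theta/p(j,\lambda)$. Hence $Y,Y',Z,Z'$ are measurable in $\lambda$.

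\textbf{The $Z$ terms.} By \eqref{eq:scaling_constraints}, every $a(j,\lambda)$ lies in $[A,B]$ with $0<A\le B<1$. So for every $\theta$ and $\lambda$ we have $\log A\le Z(\theta,\lambda),Z'(\theta,\lambda)\le \log B<0$. Taking expectations gives
\[
|\log B|\le |\ee(Z(\theta,\lambda))|,\ |\ee(Z'(\theta,\lambda))|\le |\log A| .
\]

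\textbf{Upper bound for the $Y$ terms.} Whatever the indices $m,m'$ are, we have $0\le -Y(\theta,\lambda),-Y'(\theta,\lambda)\le \sum_{i=1}^K \log(1/p(i,\lambda))$. Each summand is integrable. To see this, use the elementary inequality $\log x\le x^t/t$ for $x\ge 1$, or Jensen's inequality: $\ee\log(1/p(i,\lambda))\le \frac1t\log \ee(p(i,\lambda)^{-t})<\infty$. This gives a bound on $|\ee Y|$ and $|\ee Y'|$ that does not depend on $\theta$.

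\textbf{Lower bound for the $Y$ terms.} This is the step needing a little care, since the bound must be uniform in $\theta$ even though $m$ varies with $\theta$. I will dominate by a $\theta$-free quantity. Put $P^*(\lambda)=\max_i p(i,\lambda)$. Then $Y(\theta,\lambda),Y'(\theta,\lambda)\le \log P^*(\lambda)$ for all $\theta$. The moment assumption gives $p(i,\lambda)>0$ almost surely for every $i$. Since $K\ge2$ and the weights sum to $1$, this forces $P^*(\lambda)<1$ almost surely, so $\log P^*(\lambda)<0$ a.s. Consequently $c:=-\ee\log P^*(\lambda)>0$; this holds whether $c$ is finite or not, and it is in fact finite by the previous paragraph. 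Hence $|\ee Y(\theta,\lambda)|,|\ee Y'(\theta,\lambda)|\ge c$ for all $\theta$, which completes (i).

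\textbf{Part (ii).} Numerator and denominator in \eqref{eq:G_definition} are both negative. Their absolute values lie in intervals $[c,C]$ and $[|\log B|,|\log A|]$ respectively, with positive endpoints independent of $\theta$. Therefore $G(\theta)$ and $G'(\theta)$ lie in $\bigl[c/|\log A|,\,C/|\log B|\bigr]$ for all $\theta\ge0$.
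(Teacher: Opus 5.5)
Your proof is correct and follows essentially the same route as the paper. The $Z$ bounds come from $A\le a(i,\lambda)\le B$. The upper bound on $|\ee Y|$ and $|\ee Y'|$ comes from dominating by $\sum_i|\log p(i,\lambda)|$ and using the negative-moment hypothesis. The uniform lower bound comes from the $\theta$-free domination $Y,Y'\le\log\max_i p(i,\lambda)<0$ a.s.

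The differences are minor. You use Jensen (or $\log x\le x^t/t$) where the paper uses $|\log p|\le p^{-t}$ for small $p$. You also invoke strict negativity of $\ee\log\max_i p(i,\lambda)$ directly, whereas the paper passes through a set $\Lambda_\delta=\{\max_i p(i,\lambda)\le 1-\delta\}$ of positive probability to get the explicit constant $\eta|\log(1-\delta)|$.
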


\begin{proof}
(i) Since $A\leq a(i,\lambda)\leq B$ for all $i$ and $\lambda $, we have $|\log (B)|\leq |\log (a(i,\lambda))|\leq |\log (A)|$. 
That proves the claim about $\left| \ee_\lambda(Z(\theta,\lambda))\right| $ and 
$\left| \ee_\lambda (Z'(\theta,\lambda))\right|$.

As the probabilities $p(i,\lambda)$ can only take on the value $1$ with probability zero, we know that 
\begin{equation*}
    \P(\{\lambda :\max_{i}p(i,\lambda)=1\})=0.
\end{equation*}
Thus there is a $\delta >0$ so that if $\Lambda_\delta=\{\lambda \in \Lambda :\max_{i}p(i,\lambda)\leq 1-\delta \}$ 
then $\P(\Lambda_\delta)\geq \eta $ for some $\eta >0$. 
Notice that $Y(\theta,\lambda)\leq \log (1-\delta)$ for any $\lambda \in \Lambda_\delta$ and all $\theta$. 
Using this we get that 
\begin{align*}
\left| \ee(Y(\theta,\lambda))\right| & \geq \int_{\{ Y(\theta,\lambda)\leq \log (1-\delta) \} } \hskip -2 cm -Y(\theta,\lambda)\ d\P(\lambda) \\
                   & \geq \int_{\Lambda_{\delta}}\left| \log (1-\delta)\right| \ d\P(\lambda)\geq 
                                     \eta \left| \log (1-\delta)\right| \geq \delta \eta.
\end{align*}
The argument is the same for $\left| \ee(Y'(\theta,\lambda))\right|$.

By our assumptions on the probabilities, there exists some $t<0$ such that $\ee(p(i)^{-t})<\infty $ for $i=1,2,\ldots ,K$. Since 
\begin{equation*}
     \lim_{x\rightarrow 0^{+}}\frac{-\log (x)}{x^{-t}}=0
\end{equation*}
there is an $0<\eta <1$ so that whenever $0<p\leq \eta $ we have $|\log(p)|\leq p^{-t}$. 
But then, for all $\theta $, 
\begin{align*}
   |\ee(Y(\theta,\lambda))|& \leq \int \sum_{i=1}^K |\log (p(i,\lambda))|\ d\P(\lambda) \\[10pt]
         & \leq \sum_{i=1}^K\left( \int_{p(i,\lambda)\leq \eta}p(i,\lambda)^{-t}\ d\P(\lambda)+
                           \int_{p(i,\lambda)>\eta}-\log (p(i,\lambda))\ d\P(\lambda)\right)  \\[10pt]
                   & \leq \sum_{i=1}^K \ee(p(i,\lambda)^{-t})+\sum_{i=1}^K -\log (\eta) < \infty.
\end{align*}
The argument is the same for $|\ee(Y'(\theta,\lambda))|$.

(ii) The boundedness of $G$ and $G'$ (above and below) follow immediately from (i).
\end{proof}

For the convenience of the reader we state here the theorem on the $\Phi$
dimensions of the random measures for large dimension functions $\Phi$ found in \cite[Thm 11]{HM2}.

\begin{theorem}
\label{MainDimThm} 
Let $\mu_\blambda$ be the random measure.

(i) If $G(\psi)<\psi $ (or $G(\psi)\geq \psi)$, then there is a set $\Gamma_{\psi}\subseteq \Lambda$, of full measure, such that 
\begin{equation*}
           \odim_{\Phi}\mu_\blambda\leq \psi\  (\text{respectively}, \odim_{\Phi}\mu_\blambda\geq \psi)
\end{equation*}
for all large dimension functions $\Phi$ and all $\lambda \in \Gamma_\psi$.

(ii) If $G'(\psi)>\psi $ (or $G'(\psi)\leq \psi)$, then there is a set $\Gamma_{\psi}\subseteq \Lambda$, of full measure, such that 
\begin{equation*}
    \udim_{\Phi}\mu_\blambda\geq \psi \ (\text{respectively},  \udim_{\Phi}\mu_\blambda\leq \psi).
\end{equation*}
\end{theorem}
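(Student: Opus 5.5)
The plan is to reduce everything to sums of i.i.d. random variables along a ``worst'' path in the Moran tree, and then use large deviations and Borel--Cantelli. By Lemma 2 of \cite{HM2} (strong separation), it is enough to control, uniformly in $x\in F_\blambda$, the ratios $\mu_\blambda(I_N(x))/\mu_\blambda(I_{N+n}(x))$ against $(|I_N(x)|/|I_{N+n}(x)|)^d$. The pairs $(N,n)$ are those for which $r\approx|I_{N+n}|$ and $R\approx |I_N|$ satisfy $r\le R^{1+\Phi(R)}$. Since $A^n\le |I_{N+n}|/|I_N|\le B^n$, this constraint forces $n\ge c\,N\Phi(B^N)$. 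For a large $\Phi$ this means $n\ge c\,H_N\log N$ with $H_N\to\infty$.

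\textbf{Step 1: the worst path.} Taking logarithms, the inequality $\mu(I_N)/\mu(I_{N+n})\le C(|I_N|/|I_{N+n}|)^d$ becomes
\[
\sum_{k=N+1}^{N+n}\log\bigl(a(j_k,\blambda_k)^d/p(j_k,\blambda_k)\bigr)\le \log C,
\]
where $j_k$ is the branch followed by $x$. The steps are decoupled, so the supremum over $x$ of this sum is attained by choosing at each level the index $m(d,\blambda_k)$ of \eqref{eq:m_definition}. It therefore equals $S_{N,n}(d)=\sum_{k=N+1}^{N+n}(dZ(d,\blambda_k)-Y(d,\blambda_k))$, a sum of i.i.d.\ terms. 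Their common mean is $\ee Z(d)\,(d-G(d))$, and $\ee Z<0$ by Proposition \ref{GBded}. For the lower dimension one uses the minimising index $m'$ together with $Y',Z',G'$ in the same way, with all inequalities reversed.

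\textbf{Step 2: case $G(\psi)<\psi$.} Here the summands $\psi Z-Y$ have strictly negative mean $-2\varepsilon$. The hypothesis $\ee p(i,\lambda)^{-t}<\infty$, together with $|Z|\le|\log A|$, gives a finite exponential moment for $-Y$ near $0$. Cram\'er's (Chernoff) bound then gives $\P(S_{N,n}(\psi)>-\varepsilon n)\le e^{-cn}$. For fixed $N$, the sum over the admissible $n\ge cH_N\log N$ is $O(N^{-cH_N})$, and this is summable in $N$ because $H_N\to\infty$. By Borel--Cantelli, almost surely $S_{N,n}(\psi)\le 0$ for all admissible pairs with $N$ large. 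Finitely many small $N$ only affect the constant $C_2$. Hence $\odim_\Phi\mu_\blambda\le\psi$, simultaneously for all large $\Phi$ on one full-measure set: it suffices to use the event ``$S_{N,n}\le 0$ whenever $n\ge \log^2 N$ and $N\ge N_0$'', and every large $\Phi$ eventually dominates $\log^2N$ up to constants, since $\Phi$ large means $n/\log N\to\infty$.

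\textbf{Step 3: case $G(\psi)\ge\psi$.} Fix $d<\psi$. Along the path $m(\psi,\blambda_k)$ we have $dZ-Y=(\psi Z-Y)+(\psi-d)|Z|$. The mean of the first term is $\ge 0$ and the second is $\ge(\psi-d)|\log B|$, so the summands have mean at least $c>0$. By the strong law of large numbers, for a fixed start (say $N=0$, or any fixed $N$) we get $S_{0,n}(d)\ge cn/2\to\infty$ almost surely. The pairs $(0,n)$ with $n$ large are admissible because $R$ is fixed while $r\to0$. The quantity $S_{0,n}(d)$ is a lower bound for the supremum over $x$, so no constant $C_2$ works for exponent $d$; hence $\odim_\Phi\mu_\blambda\ge d$. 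Taking a countable sequence $d\uparrow\psi$ gives a single full-measure set independent of $\Phi$.

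Part (ii) is handled symmetrically with $m'$ and $G'$: when $G'(\psi)>\psi$, large deviations show every admissible sum is $\ge0$, and when $G'(\psi)\le\psi$, the law of large numbers gives unbounded violations. The main obstacle I expect is Step 2. There the large-deviation bound must be uniform over the growing family of windows and strong enough to beat the number of starting points $N$. This is exactly where largeness of $\Phi$ ($H\to\infty$) and the exponential-moment hypothesis on $p^{-t}$ are both needed. A smaller technical point is converting Moran-interval ratios back to ball ratios with admissible $r,R$, which is handled by Lemma 2 of \cite{HM2}.
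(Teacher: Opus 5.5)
Your overall strategy is sound, and Step 2 uses the same mechanism the paper relies on. The path that picks $m(\psi,\blambda_k)$ at each level realises the worst ratio over all $x$. All windows of length $n\gtrsim H\log N$ are then controlled uniformly in the starting level by exponential tail bounds plus Borel--Cantelli; the threshold $\zeta_N^H$ in the sketch inside the proof of Theorem~\ref{SupThm} is exactly this window length.

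One step fails as written: the single event ``$S_{N,n}\le 0$ whenever $n\ge\log^2N$'' does not cover all large $\Phi$ at once. Largeness only forces admissible lengths $n\ge cH_N\log N$ with $H_N\to\infty$ \emph{arbitrarily slowly}. For instance, $H(t)=\log\log|\log t|$ admits windows with $n\approx\log N\,\log\log N\ll\log^2N$. So $n/\log N\to\infty$ does not give $n\ge\log^2 N$. The fix is a threshold $K\log N$ with $K$ fixed. If $\P(S_n>-\varepsilon n)\le\rho^n$ with $\rho<1$, the probability that some $n\ge K\log N$ has $S_{N,n}>-\varepsilon n$ is at most $N^{-K|\log\rho|}/(1-\rho)$. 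This is summable once $K|\log\rho|>1$, and every large $\Phi$ eventually has all admissible lengths above $K\log N$. That gives one full-measure $\Gamma_\psi$ valid for all large $\Phi$, as the statement requires.

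Step 3 takes a different and simpler route than the paper's sketch. The paper obtains the lower bound from the same windowed uniform law of large numbers, over all large $N$ and all $k\ge\zeta_N^H$. You instead use that $\odim_\Phi\mu$ quantifies over every $r<R^{1+\Phi(R)}$, so a fixed $R$ with $r\to 0$ is always admissible, and the plain strong law along the $m(\psi,\cdot)$-path suffices. You should say explicitly why starting at $N=0$ is enough when $R\le C_1$ with $C_1$ arbitrary. The path sum over levels $N+1,\dots,N+n$ equals the sum over levels $1,\dots,N+n$ minus a finite quantity, so divergence from level $0$ gives divergence from every level $N$ on the same event.

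Your route needs only integrability and is automatically uniform in $\Phi$. It in fact gives $\odim_\Phi\mu_\blambda\ge\psi$ for every dimension function, large or small. The analogous remarks, including the $K\log N$ fix, apply to part (ii). There the relevant tail is the lower tail of $\psi Z'-Y'$, and $e^{-s(\psi Z'-Y')}\le A^{-s\psi}$ is bounded. So the hypothesis on $\ee(p(i,\lambda)^{-t})$ is needed only in part (i).
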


\begin{corollary}
(i) $\odim_{\Phi}\mu_\blambda=\alpha $ almost surely if and only if $G(\psi)<\psi $ for all $\psi >\alpha $ and $G(\psi)\geq \psi$
for all $\psi <\alpha$.

(ii) If there exists $\alpha $ such that $G(\alpha)=\alpha $ and $G(\psi)<\psi$ if $\psi >\alpha$, then 
$\odim_{\Phi}\mu_{\lambda}=\alpha $ a.s.
\end{corollary}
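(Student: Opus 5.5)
The corollary should follow from Theorem \ref{MainDimThm} by taking countable intersections of full-measure sets. Throughout, $\Gamma_\psi$ will be read as a full-measure subset of $\Lambda^\N$, the space of sequences $\blambda$.

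\emph{Sufficiency in (i).} Assume $G(\psi)<\psi$ for all $\psi>\alpha$ and $G(\psi)\ge\psi$ for all $\psi<\alpha$.
\begin{itemize}
\item For each rational $\psi>\alpha$, Theorem \ref{MainDimThm}(i) gives a full-measure set $\Gamma_\psi$ on which $\odim_\Phi\mu_\blambda\le\psi$.
\item For each rational $\psi<\alpha$, it gives a full-measure set on which $\odim_\Phi\mu_\blambda\ge\psi$.
\item Intersect these countably many sets. The result still has full measure, and on it $\psi'\le \odim_\Phi\mu_\blambda\le\psi$ for all rationals $\psi'<\alpha<\psi$.
\item Letting the rationals tend to $\alpha$ gives $\odim_\Phi\mu_\blambda=\alpha$ almost surely.
\end{itemize}
This holds simultaneously for every large $\Phi$, since the sets $\Gamma_\psi$ in Theorem \ref{MainDimThm} do not depend on $\Phi$. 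If $\alpha$ happens to be $0$, the second family is empty and the lower bound is trivial.

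\emph{Necessity in (i).} Suppose $\odim_\Phi\mu_\blambda=\alpha$ almost surely.
\begin{itemize}
\item If some $\psi>\alpha$ had $G(\psi)\ge\psi$, Theorem \ref{MainDimThm}(i) would give $\odim_\Phi\mu_\blambda\ge\psi>\alpha$ on a full-measure set. That set meets the full-measure set where the dimension equals $\alpha$, a contradiction.
\item If some $\psi<\alpha$ had $G(\psi)<\psi$, we would get $\odim_\Phi\mu_\blambda\le\psi<\alpha$ almost surely, again a contradiction.
\end{itemize}
This uses only that the two cases $G(\psi)<\psi$ and $G(\psi)\ge\psi$ in Theorem \ref{MainDimThm} are exhaustive.

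\emph{Part (ii).} This follows from (i). The hypothesis already gives $G(\psi)<\psi$ for $\psi>\alpha$, so it remains to show $G(\psi)\ge\psi$ for $\psi<\alpha$. The route is monotonicity: I expect $G$ to be non-increasing in $\theta$.
\begin{itemize}
\item By \eqref{eq:m_definition}, $m(\theta,\lambda)$ maximizes $\theta\log a(j,\lambda)-\log p(j,\lambda)$ over $j$.
\item Write $\ee Y(\theta)=y(\theta)$ and $\ee Z(\theta)=z(\theta)$.
\item For $\theta_1<\theta_2$, the optimality of $m$ at each parameter gives
\[
\theta_1 z(\theta_1)-y(\theta_1)\ge \theta_1 z(\theta_2)-y(\theta_2)
\quad\text{and}\quad
\theta_2 z(\theta_2)-y(\theta_2)\ge\theta_2 z(\theta_1)-y(\theta_1).
\]
\item Adding these yields $(\theta_2-\theta_1)(z(\theta_2)-z(\theta_1))\ge0$, so $z$ is non-decreasing in $\theta$ (toward $0$, being negative).
\end{itemize}

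From this I would deduce that $G=y/z$ is non-increasing, combining the two inequalities with $y,z<0$. This is the step I expect to be the main obstacle, since monotonicity of a ratio needs care. If it proves awkward, there is a fallback: when $\psi<\alpha=G(\alpha)$ and $G$ is non-increasing, $G(\psi)\ge G(\alpha)=\alpha>\psi$, which is exactly what (i) needs.

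An alternative that avoids monotonicity of $G$ entirely is a direct argument. Theorem \ref{MainDimThm}(i) applied at $\psi=\alpha$ (where $G(\alpha)\ge\alpha$) gives $\odim_\Phi\mu_\blambda\ge\alpha$ almost surely. The upper bound comes from rationals $\psi\downarrow\alpha$ as in the sufficiency argument. This yields (ii) directly, so I would present this simpler route.
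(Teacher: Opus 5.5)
Your countable-intersection argument for (i) and the direct argument you settle on for (ii) are correct. For (ii) you apply Theorem \ref{MainDimThm}(i) at $\psi=\alpha$ for the lower bound and along rationals $\psi\downarrow\alpha$ for the upper bound, and this is the immediate derivation from Theorem \ref{MainDimThm} that the paper leaves implicit. Do drop the monotonicity detour, though: $\ee Z(\theta,\lambda)$ is indeed non-decreasing, but $G$ need not be non-increasing. For example, a single IFS with ratios $1/2,1/4$ and weights $(0.55,0.45)$ gives $G(\theta)=\log 0.45/\log\frac14\approx 0.58$ for small $\theta$ but $G(\theta)=\log 0.55/\log\frac12\approx 0.86$ for large $\theta$, so both that route and its fallback would fail.
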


There is an analogous corollary for the lower $\Phi$ dimensions.

\subsection{Dimension is the supremum of $G$ and infimum of $G'$}

In this subsection we derive a slightly different way of finding the almost sure $\Phi$-dimensions of $\mu_\blambda$, with the
upper dimension as the supremum of $G(\theta)$ and the lower dimension as the infimum of $G'(\theta)$.
Furthermore we also show that the upper and lower almost sure dimensions are precisely the solutions to $G(t) = t$ and $G'(t) = t$, respectively.

\begin{theorem} \label{SupThm}
For all large $\Phi$, almost surely $\odim_\Phi\mu_\blambda=\sup_{\theta \geq 0}G(\theta)$ and 
$\udim_\Phi \mu_\blambda=\inf_{\theta \geq 0}G'(\theta)$.
\end{theorem}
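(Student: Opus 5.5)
Write $\alpha=\sup_{\theta\ge 0}G(\theta)$, which is finite (and positive) by Proposition \ref{GBded}(ii). The plan is to feed Theorem \ref{MainDimThm}(i) with a countable family of values $\psi$ approaching $\alpha$ from either side, and then intersect the resulting full-measure sets. Since the sets $\Gamma_\psi$ there do not depend on $\Phi$, this yields one full-measure set that works for every large $\Phi$ simultaneously.

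\emph{Upper bound.} For $\psi>\alpha$ we have $G(\psi)\le\alpha<\psi$, so Theorem \ref{MainDimThm}(i) gives $\odim_\Phi\mu_\blambda\le\psi$ on $\Gamma_\psi$. Taking $\psi_n\downarrow\alpha$ and intersecting the $\Gamma_{\psi_n}$ gives $\odim_\Phi\mu_\blambda\le\alpha$ almost surely.

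\emph{Lower bound.} The key step, and the only real content, is to show that $G(\psi)>\psi$ for every $\psi<\alpha$. Fix such a $\psi$ and choose $\theta$ with $G(\theta)>\psi$. By the defining maximality (\ref{eq:m_definition}) of $m(\psi,\lambda)$, for every $\lambda$
\begin{equation*}
\frac{a(m(\psi,\lambda),\lambda)^{\psi}}{p(m(\psi,\lambda),\lambda)}\ \ge\ \frac{a(m(\theta,\lambda),\lambda)^{\psi}}{p(m(\theta,\lambda),\lambda)} .
\end{equation*}
Taking logarithms gives $\psi Z(\psi,\lambda)-Y(\psi,\lambda)\ge \psi Z(\theta,\lambda)-Y(\theta,\lambda)$. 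All these quantities are integrable by Proposition \ref{GBded}, so we may take expectations. Using $\ee Y(\cdot)=G(\cdot)\,\ee Z(\cdot)$, we obtain
\begin{equation*}
\ee Z(\psi)\,\bigl(\psi-G(\psi)\bigr)\ \ge\ \ee Z(\theta)\,\bigl(\psi-G(\theta)\bigr).
\end{equation*}
Since $\ee Z(\theta)<0$ and $\psi-G(\theta)<0$, the right-hand side is strictly positive. As $\ee Z(\psi)<0$, this forces $G(\psi)>\psi$. Theorem \ref{MainDimThm}(i) then gives $\odim_\Phi\mu_\blambda\ge\psi$ almost surely. Letting $\psi_n\uparrow\alpha$ along a countable sequence gives $\odim_\Phi\mu_\blambda\ge\alpha$ almost surely.

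\emph{Lower dimension.} Let $\beta=\inf_\theta G'(\theta)$ and argue symmetrically with Theorem \ref{MainDimThm}(ii).

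For $\psi<\beta$ we have $G'(\psi)\ge\beta>\psi$, so $\udim_\Phi\mu_\blambda\ge\psi$ almost surely.

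For $\psi>\beta$, pick $\theta$ with $G'(\theta)<\psi$. The minimality (\ref{eq:mprime_definition}) of $m'(\psi,\lambda)$ gives, after the same log-and-expectation computation,
\begin{equation*}
\ee Z'(\psi)\bigl(\psi-G'(\psi)\bigr)\le \ee Z'(\theta)\bigl(\psi-G'(\theta)\bigr)<0.
\end{equation*}
Since $\ee Z'(\psi)<0$, this yields $G'(\psi)<\psi$, and hence $\udim_\Phi\mu_\blambda\le\psi$ almost surely.

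Countable intersections over $\psi_n\to\beta$ from both sides then give $\udim_\Phi\mu_\blambda=\beta$ almost surely. The main point to get right is the comparison inequality above: it is what converts "$G$ exceeds $\psi$ somewhere" into "$G(\psi)>\psi$ at $\psi$ itself", and the rest is bookkeeping with signs and countable unions.
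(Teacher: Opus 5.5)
Your proof is correct. It differs from the paper's in the half that needs work: the lower bound for $\odim_\Phi\mu_\blambda$, and symmetrically the upper bound for $\udim_\Phi\mu_\blambda$. The other bound is handled exactly as in the paper.

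For the lower bound, the paper does not use the ``$G(\psi)\ge\psi$'' half of Theorem \ref{MainDimThm}. It picks $\theta_\epsilon$ with $G(\theta_\epsilon)>d-\epsilon$ and re-runs (in sketch form) the probabilistic argument from the proof of \cite[Thm 11]{HM2}. That argument combines a uniform law-of-large-numbers estimate, for the ratio of the sums of $Y(\theta_\epsilon,\blambda_i)$ and $Z(\theta_\epsilon,\blambda_i)$ over all windows of length $k\ge\zeta_N^H$, with the nested Moran intervals obtained by always choosing the child $m(\theta_\epsilon,\blambda_n)$. This gives $\mu_\blambda(I_N)/\mu_\blambda(I_{N+k})>(|I_N|/|I_{N+k}|)^{d-\epsilon-\delta}$ directly.

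You instead separate the target exponent $\psi$ from the parameter $\theta$ deterministically. The maximality of $m(\psi,\cdot)$ gives $\ee Z(\psi)(\psi-G(\psi))\ge \ee Z(\theta)(\psi-G(\theta))$. So $G(\theta)>\psi$ for some $\theta$ forces $G(\psi)>\psi$, and Theorem \ref{MainDimThm} can then be used as a black box in both directions.

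Your route is shorter and relies only on the statement of \cite[Thm 11]{HM2}, not on the internals of its proof. It also yields more. Take $\psi=\alpha$ in your inequality and let $G(\theta)\to\alpha$. Since $|\ee Z(\theta)|\le|\log A|$, the right side tends to $0$, so $G(\alpha)\ge\alpha$ and hence $G(\alpha)=\alpha$. Together with $G(\psi)>\psi$ for $\psi<\alpha$ and $G(\psi)\le\alpha<\psi$ for $\psi>\alpha$, this shows that $\sup_\theta G(\theta)$ is attained and is the unique fixed point of $G$. That is the content of Theorem \ref{thm:G(t)=t}(ii) in its $\sup_\theta G$ form, which the paper reaches only later via Lemma \ref{lem:H-function}, a lemma that itself depends on this theorem.

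The paper's route, in exchange, exhibits an explicit extremal chain of Moran intervals witnessing the dimension. That idea is reused in the proof of Lemma \ref{lem:H-function}.
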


\begin{proof}
We have seen in Prop. \ref{GBded} that $G(\theta)$ is bounded, so let $d=\sup_{\theta}G(\theta)$.

For any $\epsilon >0$, we know that $G(d+\epsilon)\leq \sup_{\theta}G(\theta)=d<d+\epsilon $, 
so by Theorem \ref{MainDimThm} (from \cite[Thm.11]{HM2}) we have $\odim_{\Phi}\mu_\blambda\leq d+\epsilon$
almost surely. This implies that $\odim_{\Phi}\mu \leq d$ almost surely.

The proof of the reverse inequality is a slight modification of the argument from the proof of Theorem 11(ii) in \cite{HM2}. 
We provide a sketch for the benefit of the reader.

For each $\epsilon ,\delta \in (0,d)$ there are $\theta_{\epsilon} \ge 0$ with $G(\theta_{\epsilon})>d-\epsilon $ and a set $\Omega_{\delta}^{\epsilon} \subseteq \Lambda^\N$
of full measure so that for all $\blambda \in \Omega_{\delta}^{\epsilon}$ and large enough $N$ we have 
\begin{equation*}
    \left| \frac{\sum_{i=N+1}^{N+k}Y(\theta_{\epsilon},\blambda_i)}{\sum_{i=N+1}^{N+k}Z(\theta_{\epsilon},\blambda_i)}-
    G(\theta_{\epsilon})\right| \leq \delta ,\quad \forall k \geq \zeta_N^H :=
   \dfrac{ H(B^n) \log(N |\log B| )}{|\log A|}
\end{equation*}
and thus 
\begin{equation*}
       \frac{\sum_{i=N+1}^{N+k}Y(\theta_{\epsilon},\blambda_i)}{\sum_{i=N+1}^{N+k}Z(\theta_{\epsilon},\blambda_i)}
           \geq G(\theta_{\epsilon})-\delta >d-\epsilon-\delta,
\end{equation*}
which implies that
\begin{equation}    \label{eq:Gthetaepsilon}
  \sum_{i=N+1}^{N+k}Y(\theta_{\epsilon},\blambda_i)<(d-\epsilon -\delta)\sum_{i=N+1}^{N+k}Z(\theta_{\epsilon},\blambda_i).
\end{equation}
Now, given $\blambda \in \Omega_\delta^\epsilon$, consider the Moran intervals which arise by choosing the child with index $m(\theta_\epsilon,\blambda_n)$ at step $n$. 
Call the $n$-step interval obtained this way $I_{n}=I_{n}(\theta_\epsilon,\blambda)$ and note that these form a nested sequence of Moran intervals.
Then since 
\[
    \mu_\blambda(I_{N+i+1}) = \mu_\blambda(I_{N+i}) \cdot p(m(\theta_\epsilon,\blambda_{N+i+1}),\blambda_{N+i+1})
\]
and
\[
   |I_{N+i+1}| = |I_{N+i}| \cdot a(m(\theta_\epsilon,\blambda_{N+i+1}),\blambda_{N+i+1}),
\]
we see that (\ref{eq:Gthetaepsilon}) implies that 
\begin{equation*}
    \frac{\mu_\blambda(I_{N})}{\mu_\blambda(I_{N+k})}>\left( \frac{|I_{N}|}{|I_{N+k}|}\right)^{d-\epsilon -\delta}
\end{equation*}
for all $\blambda \in \Omega_{\delta}^{\epsilon}$, $k\geq \zeta_{N}^{H}$ and $N$ sufficiently large. 
Thus $\odim_{\Phi}\mu_\blambda \geq d$ almost surely.

\smallskip

The proof for the lower dimension is similar.
\end{proof}

\begin{corollary}
$\udim_{\Phi}\mu_\blambda>0$ and $\odim_{\Phi}\mu_\blambda<\infty $ almost surely.
\end{corollary}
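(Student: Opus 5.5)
This is a direct consequence of Theorem \ref{SupThm} combined with the two-sided bounds of Proposition \ref{GBded}. By Theorem \ref{SupThm}, for every large $\Phi$ we have, almost surely,
\[
\odim_\Phi\mu_\blambda=\sup_{\theta\ge 0}G(\theta) \quad\text{and}\quad \udim_\Phi\mu_\blambda=\inf_{\theta\ge 0}G'(\theta).
\]
It therefore suffices to show that $\sup_\theta G(\theta)<\infty$ and $\inf_\theta G'(\theta)>0$. Both functions are deterministic, so the almost sure statements follow at once.

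The key point is that the bounds in Proposition \ref{GBded}(i) are uniform in $\theta$, and I will make this explicit.

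\textbf{Denominators.} For the denominators we have $|\log B|\le |\ee(Z(\theta,\lambda))|,\,|\ee(Z'(\theta,\lambda))|\le|\log A|$ for every $\theta$, simply because $A\le a(i,\lambda)\le B$.

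\textbf{Upper bound on numerators.} For the numerators, the bound
\[
|\ee(Y(\theta,\lambda))|\le \sum_{i=1}^K\ee(p(i,\lambda)^{-t})+K|\log\eta|
\]
does not involve $\theta$, since it only uses $|Y(\theta,\lambda)|\le\sum_i|\log p(i,\lambda)|$. The same holds for $Y'$.

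\textbf{Lower bound on numerators.} The bound $|\ee(Y(\theta,\lambda))|\ge\delta\eta$ is also $\theta$-independent. It comes from the set $\Lambda_\delta$, on which every $\log p(i,\lambda)\le\log(1-\delta)$, together with the fact that $Y\le 0$ everywhere. The same argument applies to $Y'$.

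\textbf{Conclusion.} Since $Y,Y',Z,Z'$ are all nonpositive, $G$ and $G'$ are positive. Moreover, for all $\theta\ge0$,
\[
0<\frac{\delta\eta}{|\log A|}\le G(\theta),\;G'(\theta)\le \frac{\sum_i\ee(p(i,\lambda)^{-t})+K|\log\eta|}{|\log B|}<\infty .
\]
Taking the supremum of $G$ gives a finite value, and taking the infimum of $G'$ gives a strictly positive value. Combined with Theorem \ref{SupThm}, this yields $\odim_\Phi\mu_\blambda<\infty$ and $\udim_\Phi\mu_\blambda>0$ almost surely.

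The only step needing care is checking this uniformity in $\theta$. The index choices $m(\theta,\lambda)$ and $m'(\theta,\lambda)$ vary with $\theta$, but every estimate above bounds $Y$ and $Y'$ by quantities depending only on the full probability vector, so the choice of index is irrelevant.
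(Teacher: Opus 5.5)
Your proposal is correct and follows the paper's proof, which simply cites Proposition~\ref{GBded} and Theorem~\ref{SupThm}; you additionally spell out the uniformity in $\theta$ that the paper leaves implicit in Proposition~\ref{GBded}(ii). One cosmetic point: the two occurrences of $\eta$ in your final display are different constants. The first is the mass $\P(\Lambda_\delta)$ and the second is the threshold below which $|\log p|\le p^{-t}$; this clash is inherited from the paper's own notation.
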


\begin{proof}
This is immediate from   Prop. \ref{GBded} and Theorem \ref{SupThm}.
\end{proof}

The next technical lemma will be useful to us.  
It roughly indicates that the choices of indices in (\ref{eq:m_definition}) and (\ref{eq:mprime_definition}) are the extremes.

\begin{lemma}
\label{lem:H-function}
Let $\chi _{n}:\Lambda \to \{1,2, \ldots, K\}$ be iid rv's  and let 
\begin{equation*}
    H(\chi )=\frac{\ee(\log p(\chi ,\lambda ))}{\ee(\log a(\chi,\lambda ))}.
\end{equation*}
Then 
\begin{equation*}
    \inf_{\theta }G^{\prime }(\theta )\leq H(\chi )\leq \sup_{\theta }G(\theta ).
\end{equation*}
\end{lemma}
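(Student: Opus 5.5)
The plan is to prove each inequality by a direct pointwise comparison at a single, well-chosen value of $\theta$: namely $\theta = d := \sup_{\theta\ge 0} G(\theta)$ for the upper bound and $\theta = d' := \inf_{\theta \ge 0} G'(\theta)$ for the lower bound. No limiting argument or appeal to Theorem \ref{MainDimThm} should be needed; only the defining extremal properties (\ref{eq:m_definition}) and (\ref{eq:mprime_definition}) of $m$ and $m'$ are used. By Proposition \ref{GBded}, $d$ and $d'$ are finite and positive, so these are legitimate choices of $\theta \ge 0$. Since $\chi$ is iid, $H(\chi)$ only involves a single copy $\chi=\chi_1:\Lambda\to\{1,\dots,K\}$, which I take to be measurable.

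For the upper bound, fix $\lambda$ and put $m=m(d,\lambda)$. By (\ref{eq:m_definition}), $a(\chi(\lambda),\lambda)^d/p(\chi(\lambda),\lambda) \le a(m,\lambda)^d/p(m,\lambda)$. Taking logarithms gives, pointwise in $\lambda$,
\begin{equation*}
\log p(\chi,\lambda) - d\log a(\chi,\lambda) \;\ge\; Y(d,\lambda) - d\, Z(d,\lambda).
\end{equation*}
All terms are integrable: $|\log a| \le |\log A|$, and $\sum_i |\log p(i,\lambda)|$ is integrable by the computation in the proof of Proposition \ref{GBded}. So I can take expectations. Since $G(d) \le \sup G = d$ and $\ee Z(d,\lambda) < 0$, we get $\ee Y(d,\lambda) \ge d\,\ee Z(d,\lambda)$, so the right-hand side has nonnegative expectation. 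Hence $\ee\log p(\chi,\lambda) \ge d\,\ee \log a(\chi,\lambda)$. Dividing by the negative quantity $\ee\log a(\chi,\lambda)$, which is bounded away from $0$ because $a \le B<1$, gives $H(\chi)\le d$.

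For the lower bound the argument is symmetric. With $m'=m'(d',\lambda)$, (\ref{eq:mprime_definition}) gives
\begin{equation*}
\log p(\chi,\lambda) - d'\log a(\chi,\lambda) \;\le\; Y'(d',\lambda) - d' Z'(d',\lambda).
\end{equation*}
Now $G'(d')\ge \inf G' = d'$ together with $\ee Z'<0$ gives $\ee Y'(d',\lambda) \le d'\,\ee Z'(d',\lambda)$. Taking expectations yields $\ee \log p(\chi,\lambda) \le d'\,\ee\log a(\chi,\lambda)$, and dividing by the negative denominator gives $H(\chi) \ge d'$.

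The only delicate points are bookkeeping rather than a real obstacle. First, the sign flips must be tracked correctly when dividing by negative expectations. Second, we use only $G(d)\le d$ and $G'(d')\ge d'$, which hold whether or not the supremum or infimum is attained, so attainment is never needed. Third, integrability of $\log p(\chi,\lambda)$ must be justified via the moment assumption $\ee(p(i,\lambda)^{-t})<\infty$, exactly as in Proposition \ref{GBded}.
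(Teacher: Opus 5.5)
Your argument is correct, and it takes a different and more elementary route than the paper.

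The paper argues geometrically:
\begin{itemize}
\item it follows the nested Moran intervals obtained by taking the $\chi$-th child at each step;
\item it uses the definition of $\odim_\Phi\mu_\blambda$ to bound $\mu_\blambda(I_N)/\mu_\blambda(I_n)$ by $(|I_N|/|I_n|)^{\odim_\Phi\mu_\blambda+\varepsilon}$;
\item it lets $n\to\infty$ using the strong law of large numbers, which gives $H(\chi)\le\odim_\Phi\mu_\blambda+\varepsilon$;
\item only then does it invoke Theorem \ref{SupThm} (and through it \cite[Thm.~11]{HM2}) to identify $\odim_\Phi\mu_\blambda$ with $\sup_\theta G(\theta)$.
\end{itemize}

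You bypass all of this. You apply the extremal property of $m$ (resp.\ $m'$) at the single point $\theta=\sup G$ (resp.\ $\inf G'$), integrate the pointwise inequality, and use only the trivial facts $G(d)\le d$ and $G'(d')\ge d'$. This is essentially the comparison $p(m_n)/a(m_n)^t\le p(\chi_n)/a(\chi_n)^t$ from the paper's proof of Theorem \ref{thm:G(t)=t}(ii), but carried out on expectations rather than on limits of products.

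What your version buys:
\begin{itemize}
\item It is self-contained and makes the lemma a purely analytic statement about $G$, $G'$ and $H$. There is no random measure, no almost-sure quantifier, and no dependence on the dimension formula.
\item It proves more. For every $\theta\ge 0$ with $G(\theta)\le\theta$ you get $H(\chi)\le\theta$, and for every $\theta\ge0$ with $G'(\theta)\ge\theta$ you get $H(\chi)\ge\theta$.
\end{itemize}

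What the paper's route offers in exchange is interpretation. It exhibits $H(\chi)$ as the almost sure local scaling exponent of $\mu_\blambda$ along a particular nested sequence of Moran intervals. The inequality then reads as the $\Phi$-dimension dominating every such exponent.
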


\begin{remark}
Of course, in the special case that $\chi (\lambda )=m(\theta ,\lambda )$
(or, $m^{\prime }(\theta ,\lambda )$) then $H(\chi )=G(\theta )$ (resp., 
$G^{\prime }(\theta )$).
\end{remark}

\begin{proof}
Given $\lambda ,$ consider the Moran intervals which arise by choosing the $\chi _{n}(\lambda )$'th - child at step $n$. 
Notice these Moran intervals are nested and we will denote the interval arising at step $n$ as 
$I_{n}^{(\lambda )}$. 
For $n>N$ we have
\begin{equation*}
   \frac{\mu _{\lambda }(I_{N}^{(\lambda )})}{\mu _{\lambda }(I_{n}^{(\lambda)})}
      =\left( \prod_{i=N+1}^{n}p(\chi _{i}(\lambda ),\lambda )\right)^{-1}
\end{equation*}
and 
\begin{equation*}
  \frac{\left\vert I_{N}^{(\lambda )}\right\vert }{\left\vert I_{n}^{(\lambda)}\right\vert }
   =\left( \prod_{i=N+1}^{n}a(\chi _{i}(\lambda),\lambda )\right) ^{-1}.
\end{equation*}

Let $\varepsilon >0$. For $N$, $n>N$ sufficiently large we have 
\begin{equation*}
 \frac{\mu _{\lambda }(I_{N}^{(\lambda )})}{\mu _{\lambda }(I_{n}^{(\lambda)})}
  \leq 
   \left( \frac{\left\vert I_{N}^{(\lambda )}\right\vert }{\left\vert I_{n}^{(\lambda )}\right\vert }\right) ^{\odim_{\Phi }\mu
+\varepsilon },
\end{equation*}
thus for all $\lambda$
\begin{eqnarray*}
    -\log \prod_{i=N+1}^{n}p(\chi _{i}(\lambda ),\lambda ) &\leq &
         -(\odim_{\Phi }\mu _{\lambda }+\varepsilon )\log \prod_{i=N+1}^{n}a(\chi _{i}(\lambda ),\lambda ).
\end{eqnarray*}%
Equivalently,
\begin{equation*}
 \frac{\sum_{i=N+1}^{n}\log p(\chi _{i}(\lambda ),\lambda )}{\sum_{i=N+1}^{n}\log a(\chi _{i}(\lambda ),\lambda )}
         \leq \odim_\Phi \mu_\blambda + \epsilon.
\end{equation*}
But, as $n\rightarrow \infty $ the left hand side of the inequality above converges almost surely to $H(\chi )$ while the right hand side is equal to $\sup_\theta G(\theta) + \epsilon$ almost surely. 
Since the inequality holds for all  $\varepsilon >0$ we conclude that $H(\chi )\leq \sup_{\theta }G(\theta )$.

A symmetric argument gives the proof of the lower bound.
\end{proof}

Our next result uses this lemma to show that $\odim_\Phi \mu$ is also a solution to $G(t) = t$.

\begin{theorem} \label{thm:G(t)=t}

(i) $\sup_{\chi }H(\chi )=\sup_{\theta }G(\theta )=\odim_\Phi\mu_\blambda $ a.s.

(ii) $t= \sup_{\chi }H(\chi )$ if and only if $G(t)=t$

(iii) $\inf_\chi H(\chi) = \inf_\theta G'(\theta) = \udim_\Phi \mu_\blambda$ a.s.

(iv) $t = \inf_\chi H(\chi)$ if and only if $G'(t) = t$
 
\end{theorem}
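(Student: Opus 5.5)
Part (i) follows at once from Lemma \ref{lem:H-function}, the Remark after it, and Theorem \ref{SupThm}. The Lemma gives $H(\chi)\le\sup_\theta G(\theta)$ for every choice rule $\chi$. Taking $\chi=m(\theta,\cdot)$ gives $H(\chi)=G(\theta)$, so $\sup_\chi H(\chi)\ge\sup_\theta G(\theta)$. Theorem \ref{SupThm} then identifies this common value with $\odim_\Phi\mu_\blambda$ almost surely. Part (iii) is the mirror image, using $m'$ and $G'$.

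For (ii), the plan is to compare $G(\theta)$ with $H(\chi)$ through the maximality in (\ref{eq:m_definition}). For any $\lambda$ and any index $j$,
\[
\theta\log a(m,\lambda)-\log p(m,\lambda)\ \ge\ \theta\log a(j,\lambda)-\log p(j,\lambda),\qquad m=m(\theta,\lambda).
\]
Taking expectations with $j=\chi(\lambda)$ gives
\[
\theta\,\ee Z(\theta,\lambda)-\ee Y(\theta,\lambda)\ \ge\ \theta\,\ee\log a(\chi,\lambda)-\ee\log p(\chi,\lambda).
\]
All the $\ee\log a$ terms are negative, so dividing by $|\ee\log a(\chi,\lambda)|$ and $|\ee Z|$ turns this into a statement about $G(\theta)-\theta$ and $H(\chi)-\theta$. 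Concretely, $\ee\log p(\chi)-\theta\,\ee\log a(\chi)=|\ee\log a(\chi)|\,(\theta-H(\chi))$, and similarly $\ee Y-\theta\,\ee Z=|\ee Z|\,(\theta-G(\theta))$. The displayed inequality therefore reads
\[
|\ee Z(\theta,\lambda)|\,\bigl(G(\theta)-\theta\bigr)\ \ge\ |\ee\log a(\chi,\lambda)|\,\bigl(H(\chi)-\theta\bigr).
\]

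From this the equivalence follows. Let $d=\sup_\chi H(\chi)$, which equals $\sup_\theta G$ by (i). Suppose $G(t)=t$. Then the left side vanishes at $\theta=t$, so $H(\chi)\le t$ for every $\chi$, and hence $d\le t$. Also $t=G(t)\le\sup G=d$, so $t=d$. Conversely, suppose $t=d$. We have $G(d)\le\sup G=d$ directly. For the reverse inequality, choose $\chi_k$ with $H(\chi_k)\to d$ and apply the display at $\theta=d$. It gives $|\ee Z|\,(G(d)-d)\ge|\ee\log a(\chi_k)|\,(H(\chi_k)-d)$. Since $|\log a|\le|\log A|$ is bounded, the right side tends to $0$, so $G(d)\ge d$ and therefore $G(d)=d$. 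Part (iv) is symmetric, using (\ref{eq:mprime_definition}) with the inequalities reversed.

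The main obstacle is purely technical: making sure the expectations above are finite and that the sign bookkeeping is right. Finiteness comes from Proposition \ref{GBded} together with the integrability $\ee\,|\log p(i,\lambda)|<\infty$ for each $i$ established in its proof; the denominators are bounded away from $0$ by $|\log B|$. Measurability of $\chi$ is assumed, as it is in Lemma \ref{lem:H-function}.
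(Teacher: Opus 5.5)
Your proof is correct and follows the paper's argument. Parts (i) and (iii) come straight from Lemma \ref{lem:H-function} and Theorem \ref{SupThm}, and part (ii) rests on the same pointwise maximality inequality $a(\chi)^t/p(\chi)\le a(m)^t/p(m)$, averaged over $\lambda$. The differences are cosmetic. You integrate that inequality directly, whereas the paper passes through sample-path products and the strong law of large numbers. For the converse you use an approximating sequence $\chi_k$ together with $|\ee\log a(\chi)|\le|\log A|$. The paper instead argues by contradiction, using a constant $C>0$ that bounds $\ee(\log a(m))/\ee(\log a(\chi))$ from below.
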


\begin{proof}
Statements (i) and (iii) are immediate from Lemma \ref{lem:H-function}.

(ii) First, suppose $t=G(t)$. 
Then $t\leq \sup_{\theta }G(\theta)=\sup_{\chi }H(\chi )$ so we only need prove the other inequality.

To shorten notation, we will write $m_{n}=m(t,\lambda _{n})$ and $\chi _{n}=\chi(\lambda _{n})$.

We first observe that for any $\chi$  
\begin{equation*}
      \lim_{N}\log \prod_{n=1}^{N}a(\chi _{n})^{t/N}=
    t\lim_{N}\frac{1}{N}\sum_{n=1}^{N}\log a(\chi _{n})=
   t\ee(\log a(\chi ))\text{ for a.a. } \lambda 
\end{equation*}
and since the exponential function is continuous it follows that $\lim_{N}\prod_{n=1}^{N}a(\chi _{n})^{t/N}$ exists a.s.  
Furthermore, this limit is bounded away from $0$ as the contraction factors are bounded away from $0$. 
Similarly, $\lim_{N}\prod_{n=1}^{N}p(\chi _{n})^{1/N}$ exists a.s.

Since $t=G(t)$ we have $t\ee(\log a(m))=\ee(\log p(m))$.
Consequently, almost surely
\begin{equation*}
    \lim_{N}\log \prod_{n=1}^{N}a(m_{n})^{t/N}
  = \lim_{N}\frac{t}{N}\sum_{n=1}^{N}\log a(m_{n})
  =\lim_{N}\frac{1}{N}\sum_{n=1}^{N}\log p(m_{n})
  =\lim_{N}\log \prod_{n=1}^{N}p(m_{n})^{1/N}
\end{equation*}
and therefore 
\begin{equation*}
     \lim_{N}\prod_{n=1}^{N}a(m_{n})^{t/N}=\lim_{N}\prod_{n=1}^{N}p(m_{n})^{1/N},
\end{equation*}
equivalently,  
\begin{equation*}
      1=\lim_{N}\left( \prod_{n=1}^{N}\frac{p(m_{n})}{a(m_{n})^{t}}\right) ^{1/N}.
\end{equation*}
But by the definition of $m$, 
\begin{equation*}
     \frac{p(m_{n})}{a(m_{n})^{t}}\leq \frac{p(\chi _{n})}{a(\chi _{n})^{t}}\text{for all }\chi ,
\end{equation*}
thus
\begin{equation*}
   1\leq \lim_{N}\left( \prod_{n=1}^{N}\frac{p(\chi _{n})}{a(\chi _{n})^{t}}\right) ^{1/N}
\end{equation*}
and that implies
\begin{equation*}
   \lim_{N}\prod_{n=1}^{N}a(\chi _{n})^{t/N}\leq 
           \lim_{N}\prod_{n=1}^{N}p(\chi_{n})^{1/N}.
\end{equation*}
Taking logarithms we deduce that
\begin{equation*}
   t \ee(\log a(\chi ))=\lim_{N}\frac{t}{N}\sum_{n=1}^{N}\log a(\chi_{n})
     \leq \lim_{N}\frac{1}{N}\sum_{n=1}^{N}\log p(\chi _{n})=\ee(\log p(\chi ))\text{ a.s.}
\end{equation*}
hence  
\begin{equation*}
    t\geq \ee(\log p(\chi ))/\ee(\log a(\chi ))=H(\chi ).
\end{equation*}
Since this holds for all $\chi $ it follows that $t\geq \sup_{\chi }H(\chi )$
as we desired to show.

\medskip 

Now assume that $t=\sup_{\chi }H(\chi )=\sup_{\theta }G(\theta )$, but that $t\neq G(t)$. 
In that case, $G(t)=t-\varepsilon $ for some $\varepsilon >0$.
But then for a.a. $\lambda $ and for $N$ sufficiently large 
\begin{equation*}
   t-\varepsilon \geq 
    \frac{\frac{1}{N}\sum_{n=1}^{N}\log p(m_{n})}{\frac{1}{N}\sum_{n=1}^{N}\log a(m_{n})}
     -\frac{\varepsilon }{2}.
\end{equation*}
Hence $(t-\varepsilon /2)\sum_{n=1}^{N}\log a(m_{n})\leq \sum_{n=1}^{N}\log p(m_{n})$ 
for large enough $N,$ whence $\prod_{n=1}^{N}a(m_{n})^{t-\varepsilon /2}\leq \prod_{n=1}^{N}p(m_{n})$, 
or, equivalently,
\begin{equation*}
    \frac{\prod_{n=1}^{N}a(m_{n})^{t}}{\prod_{n=1}^{N}p(m_{n})}
        \leq
        \prod_{n=1}^{N}a(m_{n})^{\varepsilon /2}.
\end{equation*}
It follows from the definition of $m$ that for any $\chi $ we have 
\begin{equation*}
         \prod_{n=1}^{N}a(\chi _{n})^{t}\leq \prod_{n=1}^{N}a(m_{n})^{\varepsilon/2}\prod_{n=1}^{N}p(\chi _{n})
\end{equation*}
and thus
\begin{equation*}
     \frac{t}{N}\sum_{n=1}^{N}\log a(\chi _{n})\leq 
         \frac{\varepsilon /2}{N}\sum_{n=1}^{N}\log a(m_{n})+
        \frac{1}{N}\sum_{n=1}^{N}\log p(\chi _{n}).
\end{equation*}
Taking limits we deduce that 
\begin{equation*}
     t\ee(\log a(\chi ))\leq (\varepsilon /2)\ee(\log a(m))+\ee(\log p(\chi )).
\end{equation*}

Since the contraction factors are bounded away from $0,1$ there is some constant $C>0$ 
such that $\ee(\log a(m))/\ee(\log a(\chi))\geq C > 0$ for all $\chi$.
Thus 
\begin{equation*}
     t\geq 
       \frac{\varepsilon }{2}\frac{\ee(\log a(m))}{\ee(\log a(\chi ))}+
       \frac{\ee(\log p(\chi ))}{\ee(\log a(\chi ))}
    \geq   \frac{C\varepsilon }{2}+H(\chi )\text{ for all }\chi 
\end{equation*}%
and therefore $t\geq C\varepsilon /2+\sup_{\chi }H(\chi )$ which is a
contradiction.

The proof of  (iv) is similar using the obvious modifications.
\end{proof}

\begin{proposition} \label{prop:finite_GContinuous}
Suppose that there are only finitely many sets of scaling factors and finitely many probability weight vectors.
Then $\sup_\theta G(\theta)$ and $\inf_\theta G'(\theta)$ are continuous as a function of the scaling factors and probability weights and thus 
so are $\odim_\Phi \mu_\blambda$ and $\udim_\Phi \mu_\blambda$.
\end{proposition}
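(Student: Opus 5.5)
The plan is to replace the supremum over the continuous parameter $\theta$ by a maximum over a \emph{finite} family of expressions, each of which is visibly continuous in the data. In the finite setting, $\Lambda=\{\lambda_1,\dots,\lambda_M\}$ is finite, with $\P(\{\lambda_j\})=q_j$. Let $\mathcal F$ be the finite set of all deterministic selection maps $\chi:\Lambda\to\{1,\dots,K\}$; there are $K^M$ of them. For $\chi\in\mathcal F$ put
\[
H(\chi)=\frac{\sum_{j=1}^M q_j\log p(\chi(\lambda_j),\lambda_j)}{\sum_{j=1}^M q_j\log a(\chi(\lambda_j),\lambda_j)} .
\]
This is exactly the quantity of Lemma \ref{lem:H-function} when we take $\chi_n=\chi(\boldsymbol\lambda_n)$, which are iid.

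The first step is to show that
\[
\sup_{\theta\ge 0}G(\theta)=\max_{\chi\in\mathcal F}H(\chi).
\]
\begin{itemize}
\item For each $\theta$, the map $\lambda\mapsto m(\theta,\lambda)$ belongs to $\mathcal F$. By the Remark after Lemma \ref{lem:H-function}, $G(\theta)=H(m(\theta,\cdot))$. Hence $\sup_\theta G(\theta)\le\max_{\mathcal F}H$.
\item Conversely, Lemma \ref{lem:H-function}, applied to each $\chi\in\mathcal F$, gives $H(\chi)\le\sup_\theta G(\theta)$.
\end{itemize}
So the two sides are equal. The same argument, using $m'$ and the lower bound in Lemma \ref{lem:H-function}, gives $\inf_\theta G'(\theta)=\min_{\chi\in\mathcal F}H(\chi)$.

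The second step is continuity of each $H(\chi)$. For fixed $\chi$, $H(\chi)$ is a quotient of finite sums of logarithms of the scaling factors and probability weights; it is also continuous in the weights $q_j$, if those are varied too. The denominator is at most $\log B<0$, so it is bounded away from $0$. In the finite case the valid probabilities are strictly positive, so each $\log p(i,\lambda_j)$ is finite and depends continuously on $p(i,\lambda_j)$ in a neighbourhood of the given parameters. Therefore each $H(\chi)$ is continuous on the admissible parameter region, namely $p(i,\lambda_j)>0$ together with the constraints (\ref{eq:scaling_constraints}).

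A maximum or minimum of finitely many continuous functions is continuous. Hence $\sup_\theta G$ and $\inf_\theta G'$ are continuous, and Theorem \ref{SupThm} transfers this to the almost sure values of $\odim_\Phi\mu_\blambda$ and $\udim_\Phi\mu_\blambda$.

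The main obstacle I anticipate is the first step. Working with $G$ directly is awkward because $m(\theta,\lambda)$ jumps as $\theta$ or the parameters change, so $G$ itself need not be continuous in either. Lemma \ref{lem:H-function} sidesteps this by sandwiching the supremum between finitely many values of $H$. The care needed is only to check that a deterministic $\chi$ composed with the iid sequence $\boldsymbol\lambda_n$ is a legitimate choice of iid $\chi_n$ in that lemma, which is immediate.
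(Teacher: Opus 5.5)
Your proposal is correct and follows essentially the same route as the paper. The paper likewise identifies $\sup_\theta G(\theta)$ and $\inf_\theta G'(\theta)$ with $\sup_\chi H(\chi)$ and $\inf_\chi H(\chi)$ (via Theorem~\ref{thm:G(t)=t}, which is immediate from Lemma~\ref{lem:H-function}), then uses that there are only finitely many $\chi$, each with $H(\chi)$ continuous; you simply spell out the two inequalities and the continuity of each $H(\chi)$ (denominator at most $\log B<0$, valid weights strictly positive) in more detail.
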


\begin{proof}
Using the notation from the proof of Theorem \ref{thm:G(t)=t} we know $\sup_\theta G(\theta) = \sup_\chi H(\chi)$ and $\inf_\theta G'(\theta) = \inf_\chi H(\chi)$.
Since there are only finitely many possible $\chi$ and each $H(\chi)$ is continuous in its parameters, so are $\sup_\chi H(\chi)$ and $\inf_\chi H(\chi)$.
\end{proof}

The almost sure Hausdorff dimension of the random set $F_\blambda$ arising from the contraction factors $\{a(i,\lambda)\}$ is the unique solution $x \ge 0$ to the equation (see \cite{Ham})
\begin{equation}
           \ee(\log [a(1,\lambda)^{x}+a(2,\lambda)^{x}+\cdots +a(K,\lambda)^{x}])=0.
    \label{eq:ASHausdorff}
\end{equation}
We will denote by $D$ this dimension.

We first note that for any function $\chi:\Lambda \rightarrow \{1,2,\ldots ,K\}$ we have 
\begin{align*}
         \frac{\ee(\log \left( \dfrac{a(\chi(\lambda),\lambda)^{D}}{\sum_{i=1}^{K}a(i,\lambda)^{D}}\right))}
               {\ee(\log a(\chi(\lambda),\lambda))}
     &= \frac{\ee(D\log a(\chi(\lambda),\lambda))-\ee(\log \sum_{i=1}^{K}a(i,\lambda)^{D})}{\ee(\log a(\chi(\lambda),\lambda))} \\[6pt]
     & =D-0,
\end{align*}
by the definition of $D$ and we will use this with $\chi(\lambda) = m(D,\lambda)$, 
the index which maximizes the ratio $a(i,\lambda)^{D}/p(i,\lambda)$. 
By definition of $m$, we have 
\begin{equation*}
    a(i,\lambda)^{D}\leq p(i,\lambda)\frac{a(m(D,\lambda),\lambda)^{D}}
                                                               {p(m(D,\lambda),\lambda)}
     \implies \sum_{i=1}^{K}a(i,\lambda)^{D}  \leq 
                      \left( \frac{a(m(D,\lambda),\lambda)^{D}}{p(m(D,\lambda),\lambda)}\right) 
                     \sum_{i=1}^{K}p(i,\lambda)
\end{equation*}
and so 
\begin{equation}
     p(m(D,\lambda),\lambda)\leq 
        \frac{a(m(D,\lambda),\lambda)^{D}}{\sum_{i=1}^Ka(i,\lambda)^{D}}.  
   \label{eq:prob_inequality}
\end{equation}
Intuitively, this says that the choice of index $j$ that gives the maximum ratio $a(j,\lambda)^\theta/p(j,\lambda)$  always has a matching probability 
that is no greater than the ``natural'' probability given by 
\begin{equation*}
   \frac{a(m(D,\lambda),\lambda)^{D}}{\sum_{i=1}^Ka(i,\lambda)^{D}}
\end{equation*}
(which we will observe in (\ref{eq:UpperP}) is the choice that gives the minimal measure dimension). 
This means that 
\begin{equation*}
        Y(D,\lambda)=\log (p(m(D,\lambda),\lambda))\leq 
    \log \left( \frac{a(m(D,\lambda),\lambda)^{D}}{\sum_{i=1}^Ka(i,\lambda)^{D}}\right).
\end{equation*}
Noting that $Z(D,\lambda)=\log (a(m(D,\lambda),\lambda))$, we then
have 
\begin{align}
      G(D)-D &=\frac{\ee\left( Y(D,\lambda)-\log (\dfrac{a(m(D,\lambda),\lambda)^{D}}{\sum_{i=1}^K a(i,\lambda)^{D}})\right)}
                             {\ee(Z(D,\lambda))}  \notag \\[8pt]
                 &= \frac{\ee\left( \log (p(m(D,\lambda),\lambda))-\log\left( \dfrac{a(m(D,\lambda),\lambda)^{D}}
                                  {\sum_{i=1}^Ka(i,\lambda)^{D}}\right) \right)}
                             {\ee(\log (a(m(D,\lambda),\lambda)))} \geq 0  
             \label{eq:general_gap}
\end{align}
as both numerator and denominator are non-positive. 
Thus $G(D)\geq D$, with equality possible only if and only if 
\begin{equation*}
        \log (p(m(D,\lambda),\lambda))-
        \log \left( \frac{a(m(D,\lambda),\lambda)^{D}}{\sum_{i=1}^Ka(i,\lambda)^{D}}\right) =0\text{ a.s.}
\end{equation*}
A similar argument will work to show $G'(D) \le D$ and thus we have proven the following result:

\begin{proposition}
\label{prop:general_gap} 
In general, $G(D)\geq D$ and so $\odim_\Phi \mu_\blambda \ge D$ a.s. for any large dimension function $\Phi$. 
Moreover, $G(D)=D$ if and only if 
\begin{equation*}
    p(m(D,\lambda),\lambda)=\frac{a(m(D,\lambda),\lambda)^{D}}{\sum_{i=1}^{K}a(i,\lambda)^{D}}
\end{equation*}
almost surely. 
Furthermore, if $G(D)>D$ then $\odim_{\Phi}\mu_{\blambda}>D$ almost surely.

Similarly, $G'(D) \le D$ and $\udim_\Phi \mu_\blambda \le D$ a.s. for any large dimension function $\Phi$, and $G'(D) = D$ if and only if
\begin{equation*}
    p(m'(\lambda,D),\lambda)=
    \frac{a(m'(\lambda,D),\lambda)^{D}}{\sum_{i=1}^{K}a(i,\lambda)^{D}}
\end{equation*}
almost surely. 
Furthermore, if $G'(D)< D$ then $\udim_{\Phi}\mu_{\blambda}< D$ almost surely.

\end{proposition}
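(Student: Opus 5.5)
The computation just before the statement already proves most of the upper half, so I will assemble it and then mirror it for $G'$. Write $q(i,\lambda)=a(i,\lambda)^D/\sum_{j=1}^K a(j,\lambda)^D$ for the ``natural'' weights. By the definition (\ref{eq:ASHausdorff}) of $D$, for any selector $\chi$ we have $\ee(\log q(\chi(\lambda),\lambda))=D\,\ee(\log a(\chi(\lambda),\lambda))$. Taking $\chi=m(D,\cdot)$ and using (\ref{eq:prob_inequality}), $Y(D,\lambda)\le \log q(m(D,\lambda),\lambda)$ pointwise. This gives (\ref{eq:general_gap}): $G(D)-D$ is the ratio of $\ee(Y(D,\lambda)-\log q(m(D,\lambda),\lambda))\le 0$ to $\ee(Z(D,\lambda))<0$, so it is $\ge 0$. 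Both expectations are finite by Prop. \ref{GBded}. Then Theorem \ref{SupThm} gives $\odim_\Phi\mu_\blambda=\sup_\theta G(\theta)\ge G(D)\ge D$ almost surely. Alternatively, Theorem \ref{MainDimThm}(i) with $\psi=D$ gives the same conclusion directly, since $G(D)\ge D$.

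For the equality criterion, the denominator is bounded away from $0$ and the integrand of the numerator is a.s. nonpositive. So $G(D)=D$ iff that nonpositive integrand has zero expectation, i.e. vanishes a.s., i.e. $p(m(D,\lambda),\lambda)=q(m(D,\lambda),\lambda)$ a.s. For the strict statement, if $G(D)>D$ then $\sup_\theta G(\theta)\ge G(D)>D$, and Theorem \ref{SupThm} gives $\odim_\Phi\mu_\blambda>D$ a.s. The same follows from Theorem \ref{thm:G(t)=t}(ii): $D\ne G(D)$ forces $D\ne\sup_\chi H(\chi)$, while $\sup_\chi H(\chi)\ge G(D)>D$.

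For the lower half I will use $m'=m'(D,\lambda)$, which minimizes $a(i,\lambda)^D/p(i,\lambda)$. Then $a(i,\lambda)^D\ge p(i,\lambda)\,a(m',\lambda)^D/p(m',\lambda)$ for every $i$. Summing over $i$ and using $\sum_i p(i,\lambda)=1$ gives $p(m',\lambda)\ge q(m',\lambda)$, so $Y'(D,\lambda)\ge\log q(m',\lambda)$. Applying the identity above with $\chi=m'$ gives
\[
G'(D)-D=\frac{\ee\bigl(Y'(D,\lambda)-\log q(m'(D,\lambda),\lambda)\bigr)}{\ee(Z'(D,\lambda))},
\]
whose numerator is $\ge 0$ and whose denominator is $<0$, so $G'(D)\le D$. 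Equality holds iff the nonnegative integrand vanishes a.s. Theorem \ref{SupThm} then gives $\udim_\Phi\mu_\blambda=\inf_\theta G'(\theta)\le G'(D)\le D$, with strict inequality when $G'(D)<D$.

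I expect no real obstacle. The only points needing care are integrability, so that the expectations may be split and the zero-expectation argument is valid (this is Prop. \ref{GBded} together with boundedness of $\log q$ via $A\le a\le B$), and reversing the inequality in the $m'$ case.
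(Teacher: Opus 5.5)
Your proposal is correct and follows the paper's own argument. The key steps match: the identity $\ee(\log q(\chi(\lambda),\lambda))=D\,\ee(\log a(\chi(\lambda),\lambda))$ from the definition of $D$, the pointwise inequality $p(m(D,\lambda),\lambda)\le q(m(D,\lambda),\lambda)$ (reversed for $m'$) to sign the numerator of $G(D)-D$, and Theorem \ref{SupThm} (or Theorem \ref{MainDimThm}) to pass to the a.s. dimensions; your explicit integrability check and the worked-out $G'$ case fill in what the paper only calls ``similar.''
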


\begin{corollary} \label{cor:D_bound}
Almost surely we have $\udim_\Phi \mu_\blambda \le \dim_H F_\blambda \le \odim_\Phi \mu_\blambda$.
\end{corollary}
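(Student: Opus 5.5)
The plan is to combine Proposition \ref{prop:general_gap} with the fact, recalled in the paragraph preceding it, that $D$ denotes the almost sure Hausdorff dimension of $F_\blambda$ (the unique root of (\ref{eq:ASHausdorff}), from \cite{Ham}). There are no new estimates to make; the task is only to intersect the relevant full-measure sets.

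First, for the right-hand inequality, Proposition \ref{prop:general_gap} gives $G(D)\ge D$. Theorem \ref{SupThm} states that almost surely $\odim_\Phi\mu_\blambda=\sup_{\theta\ge0}G(\theta)\ge G(D)\ge D$. Alternatively, Theorem \ref{MainDimThm}(i) with $\psi=D$ and $G(D)\ge D$ gives this directly on a full-measure set $\Gamma_D$.

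Second, for the left-hand inequality, Proposition \ref{prop:general_gap} gives $G'(D)\le D$. Then Theorem \ref{SupThm} yields $\udim_\Phi\mu_\blambda=\inf_\theta G'(\theta)\le G'(D)\le D$ almost surely. Equivalently, one can apply Theorem \ref{MainDimThm}(ii) with $\psi=D$.

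Finally, intersect these two full-measure sets with the full-measure set on which $\dim_H F_\blambda=D$. A countable (here, three-fold) intersection of full-measure sets has full measure, so almost surely $\udim_\Phi\mu_\blambda\le D=\dim_H F_\blambda\le\odim_\Phi\mu_\blambda$.

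The only point needing care is that $D$ is a deterministic constant, so applying Theorem \ref{MainDimThm} at the single value $\psi=D$ involves no uncountable union of null sets. As a sanity check independent of the $G$ machinery, Proposition \ref{prop:dimensionproperties}(i) gives $\udim_\Phi\mu\le\dim_H\mu\le\dim_H\supp\mu\le\odim_\Phi\mu$ for every realization. Since $\supp\mu_\blambda=F_\blambda$ almost surely (all $p(i,\lambda)>0$ a.s.), this yields the same conclusion, and I would mention it as the alternative route. I expect no real obstacle beyond this bookkeeping.
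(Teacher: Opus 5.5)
Your proposal is correct and follows the paper: the corollary is read off directly from Proposition \ref{prop:general_gap} ($G(D)\ge D$ and $G'(D)\le D$, turned into almost sure bounds via Theorem \ref{MainDimThm} or Theorem \ref{SupThm}), together with the fact that $D=\dim_H F_\blambda$ almost surely. Your alternative route through Proposition \ref{prop:dimensionproperties}(i) and $\supp\mu_\blambda=F_\blambda$ a.s.\ is also valid, and it gives the inequality realization by realization, for every dimension function $\Phi$, without the $G$-machinery.
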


\section{The range of dimensions for dependent probabilities}
\label{sec:range}

In this section and the next we study the dimensional properties of all the random
measures which arise on a given random set. 
That is, having fixed a set of random scaling factors which produce a random set $E$, we wish to investigate all the 
possible dimensions of the measures supported on $E$ that we could get by using random probability weights.
Since we want to allow any possible set of probability weights we will take
\[
    \cW = \{ p \in \R^K : p_i \ge 0, \sum_i p_i = 1 \}.
\]
The weights which are used in a given situation are those in the support of the measure $\P$.
In addition, this means that we fix  $\cA$, the similarities $\{I_{u}\}$, and the distribution $\cQ$ on $\cA$.
Thus we consider all probability measures $\P$ on $\Lambda = \cA \times \cW$ which have $\cQ$ as its marginal distribution on $\cA$.
Recall that we impose on $\P$ the condition $\ee_\P(p(i,\lambda)^{-t}) < \infty$ for some $t > 0$.

\medskip

Recall that we use $D$ to denote the almost sure Hausdorff dimension of the random set $F_\blambda$ (see Eq. \ref{eq:ASHausdorff}).
Notice that if $a(i,u) = a(j,u)$ almost surely for all $i,j$, then $D = -\log K/\ee(\log a(1,\lambda))$.

From Proposition \ref{prop:general_gap} we know that for any random measure $\mu_\blambda$ supported on $F_\blambda$ we have 
$\odim_{\Phi}\mu_\blambda\geq D$ and $\udim_{\Phi}\mu_\blambda\leq D$. 
In this section and the next we address the question as to whether the full interval $[D,\infty)$ (or $(0,D]$) can be attained as the
upper (resp., lower) $\Phi$-dimension of one of these random measures. 
The answer turns out to be yes when the probabilities are allowed to depend on the contraction factors, but not in general in the case of independent probabilities. 
In both situations the minimal (resp. maximal) possible measure dimension is attained by some random measure and this measure can arise from a single choice of probability weights in the independent case.

We begin with the upper dimension.

\begin{theorem} \label{thm:upper_dependent}
For any $d \in [D, \infty)$, there is a choice of probabilities $p(i,\lambda)$ so
that almost surely $\odim_\Phi \mu_\blambda = d$.
\end{theorem}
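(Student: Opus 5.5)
We will produce, for each $d\ge D$, explicit probabilities depending on the scaling factors, and then compute $\sup_\theta G(\theta)$ via Theorem \ref{thm:G(t)=t}(ii): it suffices to show $G(d)=d$, or equivalently $\sup_\chi H(\chi)=d$.

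\textbf{The endpoint $d=D$.} Take the ``natural'' weights
\begin{equation*}
  p(i,\lambda)=\frac{a(i,\lambda)^{D}}{\sum_{j=1}^K a(j,\lambda)^{D}} .
\end{equation*}
These are bounded below by $A^D/K$, so the moment condition $\ee(p(i,\lambda)^{-t})<\infty$ holds. Equality in (\ref{eq:prob_inequality}) then holds identically, so Proposition \ref{prop:general_gap} gives $G(D)=D$. By Theorem \ref{thm:G(t)=t}(ii) this yields $\odim_\Phi\mu_\blambda=D$ a.s.

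\textbf{The case $d>D$.} We use a one-parameter family that concentrates extra mass away from one child. Fix $d>D$. For $s\in[D,\infty)$ choose weights of the form
\begin{equation*}
  p_s(i,\lambda)=\frac{a(i,\lambda)^{s}}{\sum_j a(j,\lambda)^{s}} .
\end{equation*}
For such weights $a(i)^\theta/p(i)=a(i)^{\theta-s}\sum_j a(j)^s$. The maximizing index $m(\theta,\lambda)$ is then the smallest contraction when $\theta<s$, any index when $\theta=s$, and the largest contraction when $\theta>s$. Direct computation gives
\begin{equation*}
  H(\chi)=s-\frac{\ee\log\sum_j a(j,\lambda)^s}{\ee\log a(\chi,\lambda)} .
\end{equation*}
Since $\ee\log\sum_j a(j)^s<0$ for $s>D$, the supremum over $\chi$ is attained by taking $\chi$ to be the index of the largest $a$. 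Hence
\begin{equation*}
  \odim_\Phi\mu=f(s):=s+\frac{\ee\log\sum_j a(j)^s}{\ee\log a_{\max}} .
\end{equation*}
Each term is continuous in $s$ by dominated convergence, using that the $a$'s lie in $[A,B]$. We have $f(D)=D$, and
\begin{equation*}
  f(s)\ge s+\frac{\ee\log \bigl(K a_{\max}^s\bigr)}{\ee\log a_{\max}}\ge \frac{\log K}{|\log A|}\cdot 0+s\cdot 0+\ldots
\end{equation*}
That bound is not useful. Better: $\sum_j a(j)^s\le K a_{\max}^s$ gives only an upper bound on $f$. Instead use $\sum_j a(j)^s\ge a_{\max}^s$, so $\log\sum_j a(j)^s\ge s\log a_{\max}$ and hence $f(s)\le 2s$. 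This bounds $f$ from above, which is still not what we need.

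For a lower bound we need $f(s)\to\infty$, but $f(s)=s+O(\cdot)$ with a numerator that is $\approx s\,\ee\log a_{\max}$. The two terms can therefore cancel, so this family may not tend to $\infty$. \emph{This is the main obstacle}, and the family must be modified.

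\textbf{The modified family and the intermediate value argument.} Use instead weights $p(i,\lambda)=q_i(\lambda)$, with $q_{M(\lambda)}=\epsilon$ at the index $M(\lambda)$ of the largest contraction and the remaining mass $1-\epsilon$ split according to the natural weights on the other children. Taking $\chi=M$ gives
\begin{equation*}
  H(M)=\frac{\log\epsilon}{\ee\log a_{\max}}\to\infty \quad\text{as }\epsilon\to0,
\end{equation*}
so by Lemma \ref{lem:H-function}, $\odim_\Phi\mu\to\infty$. The weights are bounded below, so the moment condition holds.

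To fill in every value, interpolate between the natural weights and the $\epsilon$-weights. Let $P_t=(1-t)P_{\mathrm{nat}}+tP_\epsilon$ for $t\in[0,1]$, together with $\epsilon\downarrow0$. Each $H(\chi)$ depends continuously on the parameter, and the supremum can be reduced to finitely many relevant choices of $\chi$: for fixed $\theta$, $m(\theta,\lambda)$ is a measurable selection among the $K$ indices. We still need continuity of $\sup_\theta G$ in the parameter. Argue this as in Proposition \ref{prop:finite_GContinuous}, replacing finiteness by uniform equicontinuity: the weights are uniformly bounded below along the path, so $|\log p|$ is uniformly Lipschitz in the parameter, and $\sup_\chi H(\chi)$ is then continuous. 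The intermediate value theorem then gives every $d\in[D,\infty)$.
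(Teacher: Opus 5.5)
Your final argument is correct, but it does not follow the paper. The reason you give for abandoning the first family is a computational error, and that family is in fact the paper's route.

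\textbf{The discarded family.} For $p_s(i,\lambda)=a(i,\lambda)^s/\sum_j a(j,\lambda)^s$ you have $H(\chi)=s-\ee\log\sum_j a(j,\lambda)^s/\ee\log a(\chi,\lambda)$. For $s>D$ both expectations are negative, so $H(\chi)=s-|\ee\log\sum_j a(j)^s|/|\ee\log a(\chi)|$. This is maximized by making $|\ee\log a(\chi)|$ as large as possible, that is, by taking $\chi$ to be the index of the \emph{smallest} contraction, not the largest. Your displayed $f(s)$ also has the wrong sign in front of the fraction. The correct computation gives
\[
\odim_\Phi\mu_\blambda=\frac{\ee\log\sum_i\bigl(a(i,\lambda)/a_{\min}(\lambda)\bigr)^s}{|\ee\log a_{\min}(\lambda)|}\ \ge\ \frac{s\log(1+\delta)\,\P(a_{\max}\ge(1+\delta)a_{\min})}{|\log A|}\to\infty
\]
whenever $\P(a_{\max}>a_{\min})>0$. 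The cancellation you noticed occurs at the largest contraction, which is not the maximizer. This is exactly the paper's proof: with these weights $G$ is a two-valued step function, and continuity in $s$ is evident from the closed form. The family genuinely fails only in the equicontractive case, where $p_s\equiv 1/K$. The paper treats that case separately with $p(1)=q<1/K$ and $p(i)=(1-q)/(K-1)$, which gives $\odim_\Phi\mu_\blambda=\log q/\ee\log a(1,\lambda)$.

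\textbf{Your replacement argument.} This is nonetheless valid and genuinely different.
\begin{itemize}
\item Fix $\epsilon$ with $|\log\epsilon|>d|\log A|$; there is no need to let $\epsilon\downarrow 0$.
\item Along $P_t=(1-t)P_{\mathrm{nat}}+tP_\epsilon$ the weights are bounded below by some $c_\epsilon>0$. Hence $|H_t(\chi)-H_{t'}(\chi)|\le|t-t'|/(c_\epsilon|\log B|)$ uniformly in $\chi$. The class of index functions is the same for every $t$, since each $\P_t$ lives on the graph of $u\mapsto P_t(u)$.
\item It follows that $t\mapsto\sup_\chi H_t(\chi)=\odim_\Phi\mu_{P_t}$ is Lipschitz.
\item This function equals $D$ at $t=0$, and it exceeds $d$ at $t=1$ by Lemma \ref{lem:H-function} applied to $\chi=M$. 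The intermediate value theorem finishes the proof.
\end{itemize}
Your remark about reducing to finitely many $\chi$ is false when $\cA$ is infinite, but the uniform Lipschitz bound makes it unnecessary.

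\textbf{Comparison.} Your route buys uniformity: one construction covers both the equicontractive and non-equicontractive cases, and you never identify the maximizing index or compute $G$. The price is a soft continuity argument extending Proposition \ref{prop:finite_GContinuous} to dependent weights bounded below. The paper's route buys an explicit closed-form dimension for an explicit Gibbs-type family, at the cost of a case split.
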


\begin{proof} With no loss of generality we assume that the indexing is such that $a(1,\lambda) \ge a(2,\lambda) \ge \cdots \ge a(K,\lambda)$
for all $\lambda \in \Lambda$.

First, suppose that $\P(\{\lambda :a(1,\lambda)>a(K,\lambda)\})>0$.
Under this assumption, there is some $\delta >0$ so that 
 $\P( \lambda: \{a(1,\lambda)\geq (1+\delta)a(K,\lambda)\})>0$. 
Choose $t\geq D$ and for $j=1,2,\ldots ,K$ set 
\begin{equation}
       p(j,\lambda)=\frac{a(j,\lambda)^t}{\sum_{i=1}^K a(i,\lambda)^t}.
  \label{eq:UpperP}
\end{equation}
Then we note that 
\begin{equation*}
   \frac{a(i,\lambda)^\theta}{p(i,\lambda)}\geq 
              \frac{a(j,\lambda)^\theta}{p(j,\lambda)}\Leftrightarrow \left( \frac{a(i,\lambda}{a(j,\lambda)}\right)^{\theta -t}\geq 1
\end{equation*}
which happens when either $a(i,\lambda)\geq a(j,\lambda)$ and $\theta \geq t$ or $a(i,\lambda)\leq a(j,\lambda)$ and $\theta \leq t$. 
This means that for $\theta \leq t$, the maximum of $a^{\theta}(i,\lambda)/p(i,\lambda)$ occurs for $i=K$ (the smallest scaling factor)
whereas for $t\leq \theta $ the maximum occurs for $i=1$ (the largest scaling factor)
and so 
\begin{equation*}
   Y(\theta,\lambda)=\begin{cases}
          \log p(K,\lambda) & \text{ if }\theta \leq t \\ 
          \log p(1,\lambda) & \text{ if }t<\theta 
             \end{cases} 
       \quad =\quad 
    \begin{cases}
             t\log a(K,\lambda)-\log (\sum_{i}a(i,\lambda)^{t}) & \text{ if }\theta \leq t \\ 
             t\log a(1,\lambda)-\log (\sum_{i}a(i,\lambda)^{t}) & \text{ if }t<\theta 
    \end{cases}
\end{equation*} 
and 
\begin{equation*}
      Z(\theta,\lambda)=\begin{cases}
            \log a(K,\lambda) & \text{ if }\theta \leq t \\ 
            \log a(1,\lambda) & \text{ if }t<\theta.
      \end{cases}
\end{equation*}
Thus 
\begin{equation*}
     G(\theta)=\begin{cases}
             t-\displaystyle\frac{\ee(\log \sum_{i}a(i,\lambda)^{t})}{\ee{\log a(K,\lambda)}} & \text{ if }\theta \leq t \\[10pt]
             t-\displaystyle\frac{\ee(\log \sum_{i}a(i,\lambda)^{t})}{\ee{\log a(1,\lambda)}} & \text{ if }t<\theta,
      \end{cases}
\end{equation*}
and so $G$ takes at most two different values.
Notice that if $t=D$ then $G(\theta)=D$ for all $\theta $ and thus $\odim_{\Phi}\mu_\blambda=D$ almost surely in this case. 
On the other hand, if $t>D$, then $\ee(\log (\sum_{i}a(i,\lambda)^{t}))<0$. 
Further since $\ee(\log (a(K,\lambda))) \le \ee(\log( a(i,\lambda)))<0$ we know that for any given $t$ the value of 
$G(\theta)$ for $\theta \le t$ is the larger of the two values. 
So
\begin{equation*}
    \odim_{\Phi}\mu_\blambda=
   t-\displaystyle\frac{\ee(\log \sum_{i}a(i,\lambda)^{t})}{\ee(\log a(K,\lambda))}.
\end{equation*}
Thus, 
\begin{align*}
   \odim_{\Phi}\mu_\blambda& =t-\frac{\ee(\log \sum_{i}a(i,\lambda)^{t})}{\ee(\log a(K,\lambda))}=
      t-\frac{t\,\ee(\log a(K,\lambda))}{\ee(\log a(K,\lambda))}-
               \dfrac{\ee(\log (\frac{\sum_{i}a(i,\lambda)^{t}}{a(K,\lambda)^t}))}{\ee(\log a(K,\lambda))} \\
       & =\frac{\ee(\log \sum_{i}\left( \frac{a(i,\lambda)}{a(K,\lambda)}\right)^{t})}{-\ee(\log a(K,\lambda))}\\
   &\geq \frac{1}{-\ee(\log a(K,\lambda))}
          \int_{a(1,\lambda) \ge (1+\delta) a(K,\lambda)}\log (\sum_{i}\left( \frac{a(i,\lambda)}{a(K,\lambda)}\right)^{t})\ d\P(\lambda) \\
         & \ge \frac{1}{-\ee(\log a(K,\lambda))}\int_{a(1,\lambda) \ge (1+\delta) a(K,\lambda)}\ \log(\left( \frac{ a(1,\lambda)}{a(K,\lambda)} \right)^t) d\P(\lambda) \\
         & \ge \frac{1}{-\ee(\log a(K,\lambda))}\int_{a(1,\lambda) \ge (1+\delta) a(K,\lambda)} t \log((1 + \delta))  \ d\P(\lambda) \rightarrow \infty 
\end{align*}%
as $t\rightarrow \infty $. 
Thus by an appropriate choice of $t>D$ we can get arbitrarily large values for the almost sure $\Phi$-dimension of 
$\mu_\blambda$. 
Since 
\begin{equation*}
       t\mapsto t-\displaystyle\frac{\ee(\log \sum_{i}a(i,\lambda)^{t})}{\ee(\log a(K,\lambda))}
\end{equation*}
is a continuous function of $t$, we get our desired result by the
Intermediate value theorem in the case where $\P(\{a(1,\lambda)>a(K,\lambda)\})>0$.

To finish the proof of the theorem we now assume that $a(1,\lambda)=a(K,\lambda)$ almost surely 
(and thus all the scaling factors are the same almost surely). 
In this case, we set 
\begin{equation*}
          p(1)=q<1/K\ \text{ and }\ p(i)=\frac{1-q}{K-1}\text{ for }i=2,3,\ldots ,K.
\end{equation*}%
Since $a(i,\lambda)=a(j,\lambda)$ almost surely for all $i,j$, we have 
\begin{equation*}
        \frac{a(1,\lambda)^\theta}{p(1,\lambda)}\geq \frac{a(i,\lambda)^\theta}{p(i,\lambda)}\Leftrightarrow 
             \frac{1}{q}\geq \frac{1}{\frac{1-q}{K-1}}=\frac{K-1}{1-q}\Leftrightarrow q\leq 1/K
\end{equation*}
and thus since we choose $q < 1/K$ this means that $G(\theta)=\frac{\log q}{\ee(\log a(1,\lambda))}$ for all values of $\theta $. 
Hence $\overline{\dim}_{\Phi}\mu_\blambda=\log (q)/\ee(\log a(1,\lambda))$ almost surely. 
Clearly as $q\rightarrow 0$ this goes to $\infty $ and so again in this case we obtain any dimension in the range $[D,\infty)$.
\end{proof}

Next, we see that the analogous answer holds in the lower dimension case,
although the proof is necessarily different.

\begin{theorem} \label{thm:lower_dependent}
Let $d \in (0,D]$. 
There is a choice of probabilities $p(i,\lambda)$ so that almost surely $\udim_\Phi \mu_\blambda = d$ with $\mu_\blambda$ doubling.
\end{theorem}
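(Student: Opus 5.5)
The plan is to realize every $d\in(0,D]$ with explicit probability rules of two kinds, and to read off $\udim_\Phi\mu_\blambda=\inf_\theta G'(\theta)$ from Theorem \ref{SupThm}. As in the proof of Theorem \ref{thm:upper_dependent}, I will index so that $a(1,\lambda)\ge a(2,\lambda)\ge\cdots\ge a(K,\lambda)$ for all $\lambda$. Each rule will make $m'(\theta,\lambda)$ equal to $K$, the smallest scaling factor, for every $\theta\ge 0$, so that $G'$ is constant. The two families will overlap at the value $c_0:=\log K/(-\ee(\log a(K,\lambda)))$, which is $\le D$; this is the point where the first family cannot go below.

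\emph{Family 1, covering $[c_0,D]$.} For $t\in[0,D]$, set $p(j,\lambda)=a(j,\lambda)^t/\sum_i a(i,\lambda)^t$. Then $a(i,\lambda)^\theta/p(i,\lambda)$ is proportional to $a(i,\lambda)^{\theta-t}$. I would handle the region $\theta\ge t$ and the region $\theta<t$ as in Theorem \ref{thm:upper_dependent}:
\begin{itemize}
\item for $\theta\ge t$ the minimizer is $K$;
\item for $\theta<t$ the minimizer is $1$.
\end{itemize}
Hence $G'$ takes only the two values $t-\ee(\log\sum_i a(i,\lambda)^t)/\ee(\log a(j,\lambda))$ with $j\in\{1,K\}$. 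For $t\le D$ we have $\ee\log\sum_i a(i,\lambda)^t\ge 0$, so the smaller of the two values is the one with $j=K$, because $|\ee\log a(K,\lambda)|$ is the larger. This gives
\[
\udim_\Phi\mu_\blambda=\frac{\ee\bigl(\log\sum_i (a(i,\lambda)/a(K,\lambda))^t\bigr)}{-\ee(\log a(K,\lambda))}.
\]
This expression is continuous in $t$, equals $D$ at $t=D$, and equals $c_0$ at $t=0$. The intermediate value theorem therefore covers $[c_0,D]$.

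\emph{Family 2, covering $(0,c_0]$.} I expect this to be the main obstacle. Letting $t\to-\infty$ in Family 1 does not give values tending to $0$ when there are ties among the scaling factors. So instead, for $q\in(0,1/K]$, I would put
\[
p(K,\lambda)=1-(K-1)q \quad\text{and}\quad p(i,\lambda)=q \text{ for } i<K.
\]
Then $p(K)\ge p(i)$ and $a(K)^\theta\le a(i)^\theta$ for all $\theta\ge0$, so $a(K)^\theta/p(K)$ is minimal for every $\theta$. Ties are harmless, since tied indices give equal values of $Y'$ and $Z'$. Thus
\[
G'(\theta)\equiv \frac{\log(1-(K-1)q)}{\ee(\log a(K,\lambda))}
\]
for all $\theta$. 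This is continuous in $q$, tends to $0$ as $q\to0$, and equals $c_0$ at $q=1/K$. Theorem \ref{SupThm} together with the intermediate value theorem then gives every $d\in(0,c_0]$. The equal-scaling case is included automatically.

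\emph{Doubling.} In both families the weights are bounded below uniformly in $\lambda$:
\begin{itemize}
\item in Family 1, $p(j,\lambda)\ge A^t/(K B^t)$;
\item in Family 2, $p(j,\lambda)\ge q$.
\end{itemize}
The integrability condition $\ee(p(i,\lambda)^{-t})<\infty$ therefore holds trivially. Combined with the uniform separation $\tau$ of the Moran intervals and the bounds $A\le a(i,\lambda)\le B$, a uniform lower bound on the weights gives doubling by the standard argument. A ball $B(x,R)$ with $x\in F_\blambda$ contains the Moran interval of $x$ at a level comparable to $R$, and meets only boundedly many Moran intervals of that level. Consequently $\mu_\blambda(B(x,2R))\le C\mu_\blambda(B(x,R))$, with $C$ depending only on $A$, $\tau$ and the lower bound on the weights.
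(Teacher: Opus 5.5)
Your proof is correct and follows the paper's argument. The paper uses the same two families: $p(K,\lambda)=q\in[1/K,1)$ with the remaining weights equal (your Family 2 after reparametrizing $q\mapsto 1-(K-1)q$) to cover $(0,\log K/|\ee(\log a(K,\lambda))|]$, and $p(j,\lambda)=a(j,\lambda)^t/\sum_i a(i,\lambda)^t$ with $t\in[0,D]$ plus the intermediate value theorem for the rest, with doubling justified by the weights being bounded away from zero. The differences are cosmetic: you skip the paper's separate equal-scaling case because your two ranges meet at $c_0$, and at $\theta=t$ the minimal index is $1$ rather than $K$, which does not change $\inf_\theta G'$.
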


\begin{proof}
We start by setting $p(K,\lambda)=q$ and $p(i,\lambda)=\frac{1-q}{K-1}$ for $i=1,2,\ldots ,K-1$ where $q\in [1/K,1)$. 
Then $\min_{i}a(i,\lambda)^\theta/p(i,\lambda)=a(K,\lambda)^\theta/q$ for any $\theta $ since $a(K,\lambda)\leq a(i,\lambda)$ and $q\geq 1/K$ implies $q\geq (1-q)/(K-1)$. 
Thus $Y'(\theta,\lambda)=\log (q)$ and $Z'(\theta,\lambda)=\log a(K,\lambda)$ for any $\theta \geq 0$ and so 
\begin{equation*}
   G'(\theta)=\frac{\log q}{\ee(\log a(K,\lambda))}.
\end{equation*}
This means that almost surely $\udim_{\Phi}\mu_\blambda=\dfrac{\log q}{\ee(\log a(K,\lambda))}$ which spans the entire
interval $(0,-\log K/\ee(\log a(K,\lambda))]$ as $q$ varies over $[1/K,1)$.

Notice that if $a(1,\lambda)=a(K,\lambda)$ almost surely, then the almost sure Hausdorff dimension of $F_\blambda$ is 
$D=\displaystyle\frac{-\log K}{\ee(\log a(K,\lambda))}$. 
That shows that in this case we see that we can obtain all possible lower dimensions.

So suppose that $\P(a(1,\lambda)>a(K,\lambda))>0$. 
We have seen that for any $d\in (0,\displaystyle\frac{-\log K}{\ee(\log a(K,\lambda))}]$ there is a choice
of probabilities which gives $\udim_{\Phi}\mu_\blambda=d$ almost surely. 
Thus we have only to show that this is also true for $d\in \displaystyle[\frac{-\log K}{\ee(\log a(K,\lambda))},D]$.

Let $0\leq t\leq D$ and set $p(j,\lambda)=\dfrac{a(j,\lambda)^t}{\sum_i a(i,\lambda)^t}$ for $j=1,2,\ldots ,K$. 
Notice that since $t\leq D$ we have $\ee(\log (\sum_{i}a(i,\lambda)^{t}))\geq 0$.

With this choice of $p(j,\lambda)$, we have 
\begin{equation*}
  Y'(\theta,\lambda)=\begin{cases}
          \log p(1,\lambda) & \text{ if }\theta \leq t, \\ 
          \log p(K,\lambda) & \text{ if }t<\theta 
     \end{cases}
     =
    \begin{cases}
         t\log a(1,\lambda)-\log \sum_{i} a(i,\lambda)^t & \text{ if }\theta \leq t, \\ 
         t\log a(K,\lambda)-\log \sum_{i} a(i,\lambda)^{t} & \text{ if }t<\theta ;
   \end{cases}
\end{equation*}
and
\begin{equation*}
    Z'(\theta,\lambda)=\begin{cases}
           \log a(1,\lambda) & \text{ if }\theta \leq t, \\ 
           \log a(K,\lambda) & \text{ if }t<\theta ;
    \end{cases}
\end{equation*}
and so 
\begin{equation*}
    G'(\theta)=  \begin{cases}
               t-\dfrac{\ee(\log \sum_{i} a(i,\lambda)^{t})}{\ee(\log (a(1,\lambda)))}, & \text{ if }\theta \leq t, \\[10pt]
               t-\dfrac{\ee(\log \sum_{i} a(i,\lambda)^{t})}{\ee(\log a(K,\lambda))}, & \text{ if }t<\theta.
    \end{cases}
\end{equation*}
Now $\ee(\log a(1,\lambda))<0$ and $\ee(\log a(K,\lambda))<0$, while ${\ee}(\log \sum_{i} a(i,\lambda )^{t})\geq 0$ (since $t\leq D$) and 
thus $G'(\theta)\geq t$ for any $\theta $. 
Thus  the smaller value of $G'$ is for $\theta > t$ and so
\begin{equation*}
    \udim_{\Phi}\mu_\blambda=t-\frac{\ee(\log \sum_{i} a(i,\lambda)^{t})}{\ee(\log a(K,\lambda))}=:f(t).
\end{equation*}
Note that $f(D)=D$ since $\ee(\log (\sum_{i} a(i,\lambda)^{D}))=0$ and $f(0)=-\frac{\log K}{\ee(\log a(K,\lambda))}$. 
Since $f$ is continuous as a function of $t$, the Intermediate Value Theorem guarantees that for any 
$d\in \displaystyle[\frac{-\log K}{\ee(\log a(K,\lambda))},D]$ there is some $t$ for which $f(t)=d$.

We note that $\mu_\blambda$ will be doubling in all the above cases since the probabilities are chosen to be strictly bounded away from zero.
\end{proof}

One immediate consequence of our results is a value for the almost sure $\Phi$-dimension of the random Moran set $F_\blambda$.

\begin{corollary} \label{cor:asdimF}
For any large dimension function $\Phi$ we have that 
\[
   \odim_\Phi F_\blambda = \udim_\Phi F_\blambda = D = \dim_H F_\blambda \ \text{ a.s.}
\]
where $D$ is the unique solution to
\[
      \ee(\log \sum_{i=1}^K a(i,\lambda)^x)=0.
\]
In addition,  $D=\max \udim_{\Phi}\mu_\blambda =\min \odim_{\Phi}\mu_\blambda$ 
where the maximum (or minimum) is taken over the associated random measures.
\end{corollary}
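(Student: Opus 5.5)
The plan is to squeeze the set dimensions between measure dimensions, using the natural measure built from the exponent $t=D$. In the proof of Theorem \ref{thm:upper_dependent} I take $p(j,\lambda)=a(j,\lambda)^D/\sum_i a(i,\lambda)^D$. These weights are bounded away from $0$ because $A\le a(j,\lambda)\le B$, so the moment condition $\ee(p(i,\lambda)^{-t})<\infty$ holds and the resulting $\mu_\blambda$ is a legitimate random measure in the dependent model. The computation in that proof, with $t=D$, gives $G(\theta)=D$ for all $\theta$. The analogous computation in Theorem \ref{thm:lower_dependent} gives $G'(\theta)=D$ for all $\theta$, since $\ee(\log\sum_i a(i,\lambda)^D)=0$. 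Theorem \ref{SupThm} then yields $\odim_\Phi\mu_\blambda=\udim_\Phi\mu_\blambda=D$ almost surely for this one measure.

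Next I compare with the set. Proposition \ref{prop:dimensionproperties}(i) gives
\[
\udim_\Phi\mu\le\dim_H\supp\mu\le\odim_\Phi\supp\mu\le\odim_\Phi\mu .
\]
Since $\supp\mu_\blambda=F_\blambda$ (all weights are positive), this gives $\dim_H F_\blambda\le\odim_\Phi F_\blambda\le D$ almost surely. For the lower dimension I use part (ii). The measure is doubling: strong separation plus weights bounded below means that a ball $B(x,2R)$ meets only boundedly many Moran intervals of length comparable to $R$, and these carry comparable mass. Hence $D=\udim_\Phi\mu_\blambda\le\udim_\Phi F_\blambda$. Part (i) also gives $\udim_\Phi F_\blambda\le\udim_B F_\blambda\le\odim_B F_\blambda\le\odim_\Phi F_\blambda$, so $D\le\udim_\Phi F_\blambda\le\odim_\Phi F_\blambda\le D$. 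It remains to identify $\dim_H F_\blambda$ with $D$. This is the cited result of \cite{Ham}, or it follows from the same chain together with $D=\udim_\Phi\mu_\blambda\le\dim_H\mu_\blambda\le\dim_H F_\blambda$. So all four quantities equal $D$ almost surely.

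For the final assertion, Corollary \ref{cor:D_bound} (equivalently Proposition \ref{prop:general_gap}) shows that every random measure in the model satisfies $\udim_\Phi\mu_\blambda\le D\le\odim_\Phi\mu_\blambda$ almost surely. The natural measure above attains $D$ in both, so the maximum of $\udim_\Phi\mu_\blambda$ and the minimum of $\odim_\Phi\mu_\blambda$ are both $D$. Alternatively, Theorems \ref{thm:upper_dependent} and \ref{thm:lower_dependent} with $d=D$ give the same conclusion.

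I expect the doubling property needed for Proposition \ref{prop:dimensionproperties}(ii) to be the main obstacle, since the measure must be doubling for the lower bound on $\udim_\Phi F_\blambda$. Theorem \ref{thm:lower_dependent} asserts this for its constructed measures, and I would invoke that. If more justification is wanted, I would use the uniform gap $\tau|I_n|$ between siblings to compare $\mu(B(x,R))$ with the mass of the Moran interval at the level where $|I_n|\approx R$, losing only constants depending on $A$, $\tau$ and $\min p$.
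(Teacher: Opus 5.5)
Your proof is correct and follows the paper's own argument. You bound $\odim_\Phi F_\blambda$ above by a measure with $\odim_\Phi\mu_\blambda=D$, bound $\udim_\Phi F_\blambda$ below via Proposition~\ref{prop:dimensionproperties}(ii) applied to a doubling measure with $\udim_\Phi\mu_\blambda=D$, and get extremality from Corollary~\ref{cor:D_bound}; the only difference is that you use the single natural measure $p(j,\lambda)=a(j,\lambda)^D/\sum_i a(i,\lambda)^D$ for both bounds and recover $\dim_H F_\blambda=D$ from the chain of inequalities rather than citing \cite{Ham}.
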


\begin{proof}
We always have $\odim_\Phi F_\blambda \le \odim_\Phi \mu_\blambda$ and since the random measure constructed in Theorem \ref{thm:lower_dependent} is doubling, we also have $\udim_\Phi \mu_\blambda \le \udim_\Phi F_\blambda$ for those measures.
Since we can arrange for $\odim_\Phi \mu_\blambda = D$ or $\udim_\Phi \mu_\blambda = D$, the conclusion follows.
\end{proof}

\section{The range of dimensions with independent probabilities}
\label{sec:independentrange}

We continue to use the notation described at the beginning of Section \ref{sec:range}, but now we consider the situation where the probability measure $\P$ on $\Lambda = \cA \times \cW$ is a 
product measure, $\P = \cQ \times \cR$. So we will fix $\cQ$ and allow any possible $\cR$.

In the case when the random Moran set, $F_\blambda$, is generated from the IFSs each of which has $K$ children, is equicontractive with common contraction factor $r(\lambda)$ and is strongly separated, then the situation turns out to be very simple, as we see first.
Almost surely $\dim _{H}F_{{\boldsymbol{\lambda }}}=D$ where $D$ satisfies 
${\ee}(\log K r^{D}(\lambda ))=0$. 
Solving this gives 
\begin{equation*}
   D=\frac{\log 1/K}{{\ee}(\log r(\lambda ))}.
\end{equation*}
Take the associated random measures with the fixed set of probabilities $\{p,p,...p,1-(K-1)p\}$, $0<p\leq 1/K$. 
As $p\leq 1-(K-1)p$, Theorem \ref{MainDimThm} gives 
\begin{equation*}
   \odim_{\Phi}\mu_\blambda=\frac{\log p}{\ee(\log r(\lambda))}\ \text{ and } \ 
    \udim_{\Phi}\mu_\blambda=\frac{\log (1-(K-1)p)}{\ee(\log r(\lambda))}\text{ a.s.}
\end{equation*}
By making suitable choices of $p\leq 1/K$ we obtain the full intervals $[D,\infty)$ 
and $(0,D]$ as in the dependent probability case.

Thus our focus will be on the general, non-equicontractive case.

\subsection{Do we obtain the set dimension as a measure dimension?}

Our study of the dimensions of random measures arising from independent
probabilities in the non-equicontractive case will begin by first seeing
that the answer to this question is, in general, no.

\begin{theorem}
\label{thm:general_gap_condition} 
Suppose there are $\epsilon >0$ and $t\in (0,1)$, sets $\Upsilon_{1}, \Upsilon_2 \subseteq \cA$ 
both of positive probability, and an index $i\in\{1,2,\ldots ,K\}$ so that 
\begin{equation*}
     \eta_1 := \sup_{\lambda_1 \in \Upsilon_1 \times \cW} \frac{a(i,\lambda_{1})^{D}}{\sum_{\ell =1}^Ka(\ell ,\lambda_{1})^{D}} <
                 \inf_{\lambda_2 \in \Upsilon_2 \times \cW} \frac{a(i,\lambda_{2})^{D}}{\sum_{\ell=1}^Ka(\ell ,\lambda_{2})^{D}} := \eta_2.
\end{equation*}
Then there are $\delta_1, \delta_2 >0$ so that for any choice of independent probabilities we have that 
$G(D)-D>\delta_1$ and $D - G'(D) > \delta_2$,
thus $\odim_{\Phi}\mu_\blambda>D+\delta_1$ and $\udim_\Phi \mu_\blambda < D - \delta_2$ almost surely for any large dimension function $\Phi$.
\end{theorem}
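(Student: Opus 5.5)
The plan is to work directly from the identity (\ref{eq:general_gap}) and show that its numerator is bounded away from $0$ uniformly over all independent choices $\P=\cQ\times\cR$. For $u\in\cA$ write $q_j(u)=a(j,u)^D/\sum_{\ell}a(\ell,u)^D$, so that $\sum_j q_j(u)=1$. Since the denominator $\sum_\ell a(\ell,u)^D$ does not depend on $j$, the index $m(D,\lambda)$ that maximizes $a(j,\lambda)^D/p(j,\lambda)$ also maximizes the ratio $q_j(u)/p(j,v)$. Hence (\ref{eq:general_gap}) becomes
\[
G(D)-D=\frac{\ee\bigl(\log \max_j q_j(u)/p(j,v)\bigr)}{-\ee(Z(D,\lambda))}.
\]
Because $\sum_j q_j=\sum_j p_j=1$, the maximum ratio is always at least $1$, so the integrand is nonnegative. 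The denominator lies in $(0,|\log A|]$. It therefore suffices to find $c>0$, depending only on $\eta_1,\eta_2$ and $\cQ$, with $\ee(\log\max_j q_j/p_j)\ge c$ for every $\cR$.

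The key step is a pointwise dichotomy in $v$, followed by Fubini, which is where independence is used. Fix $v\in\cW$ and set $s=(\eta_1+\eta_2)/2$.

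\emph{Case $p(i,v)\le s$.} Every $u\in\Upsilon_2$ satisfies $q_i(u)/p(i,v)\ge \eta_2/s=:1+c_1>1$.

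\emph{Case $p(i,v)>s$.} Every $u\in\Upsilon_1$ satisfies $\sum_{j\ne i}q_j(u)\ge 1-\eta_1$, while $\sum_{j\ne i}p(j,v)<1-s$. Hence some $j\ne i$ has $q_j(u)/p(j,v)\ge (1-\eta_1)/(1-s)=:1+c_2>1$.

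Thus for every $v$, the inner integral over $u$ (with $v$ fixed, using the product structure) is at least $\min(\cQ(\Upsilon_1),\cQ(\Upsilon_2))\log(1+\min(c_1,c_2))=:c$. Integrating over $v$ against $\cR$ gives the bound uniformly, and we take $\delta_1<c/|\log A|$. The main care needed is exactly this uniformity: the constants come only from $\eta_1,\eta_2$ and $\cQ(\Upsilon_k)$, never from $\cR$. This step genuinely needs $\P$ to be a product measure, since in the dependent case $p$ could track $q$ and kill the gap, as in Theorem \ref{thm:upper_dependent}. The hypothesis $t\in(0,1)$ plays no role in this argument.

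For $G'$ the argument is symmetric, with the minimum ratio $\min_j q_j/p_j\le1$ and $D-G'(D)=\ee(\log\min_j q_j/p_j)/\ee(Z'(D,\lambda))$. The cases are as follows.

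\emph{Case $p(i,v)\ge s$.} On $\Upsilon_1$ we get $q_i/p_i\le\eta_1/s<1$.

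\emph{Case $p(i,v)<s$.} On $\Upsilon_2$ we have $\sum_{j\ne i}q_j\le1-\eta_2<1-s<\sum_{j\ne i}p_j$, so some ratio is at most $(1-\eta_2)/(1-s)<1$.

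This gives $\delta_2$ in the same way. Finally, by Theorem \ref{SupThm}, almost surely $\odim_\Phi\mu_\blambda=\sup_\theta G(\theta)\ge G(D)>D+\delta_1$ and $\udim_\Phi\mu_\blambda=\inf_\theta G'(\theta)\le G'(D)<D-\delta_2$.
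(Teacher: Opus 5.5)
Your proof is correct and follows essentially the same route as the paper. You rewrite $G(D)-D$ via (\ref{eq:general_gap}) with the nonnegative integrand guaranteed by (\ref{eq:prob_inequality}), and split on whether $p(i,v)$ lies below or above a threshold between $\eta_1$ and $\eta_2$. You then use index $i$ on $\Upsilon_2$ in one case and a pigeonhole over $j\neq i$ on $\Upsilon_1$ in the other, and integrate using the product structure $\P=\cQ\times\cR$. Your normalization $q_j/p_j$ with the mediant inequality is a slightly cleaner, $K$-independent way of obtaining the multiplicative margins that the paper gets as $1+\epsilon$ and $1+\epsilon/(K-1)$. You also write out the $G'$ half, which the paper leaves as ``similar.''
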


\begin{proof}
To simplify notation, let $\mfS(u)=\sum_{\ell=1}^Ka(\ell ,u)^{D}$ for a given $u \in \cA$. 
Choose $t$ and $\epsilon > 0$ so that $\eta_1 + \epsilon < t < \eta_2 - \epsilon$.
Next, choose $\lambda =(u,v)\in \Lambda = \cA \times \cW$ where $u_{1}\in \Upsilon_{1}$ and $u_{2}\in \Upsilon_{2}$ are fixed and consider $p(i,v)$. 
Either $p(i,v)\leq t$ or $p(i,v)>t$. 

If $p(i,v)\leq t$, then 
\begin{equation*}
     p(i,v)\leq t\leq \frac{a(i,u_{2})^{D}}{\mfS(u_{2})}-\epsilon 
\end{equation*}
and so with $m_{2}=m(D,(u_{2},v))$ we have 
\begin{equation*}
       \mfS(u_{2})\leq 
         \frac{a(i,u_{2})^{D}}{p(i,v)}-\epsilon \frac{\mfS(u_{2})}{p(i,v)}
      \leq \frac{a(m_{2},u_{2})^{D}}{p(m_{2},v)}-
        \epsilon \frac{\mfS(u_{2})}{p(i,v)}
                \leq \frac{a(m_{2},u_{2})^{D}}{p(m_{2}, v)}-\epsilon \mfS(u_{2}).
\end{equation*}
Thus 
\begin{equation*}
    (1+\epsilon)\mfS(u_{2})\leq \frac{a(m_{2},u_{2})^{D}}{p(m_{2},v)}\implies p(m_{2},v)
      \leq \frac{a(m_{2},u_{2})^{D}}{(1+\epsilon)\mfS(u_{2})}
\end{equation*}
and therefore 
\begin{equation}
     \log (p(m_2,v))\leq \log \left( \dfrac{a(m_2,u_2)^{D}}{\mfS(u_2)}\right) -\log (1+\epsilon).
\label{eq:gap_inequality2}
\end{equation}

On the other hand, if $p(i,v)>t$, then
\[
    p(i,v) > \frac{a(i,u_1)^D}{\mfS(u_1)} + \epsilon \implies \sum_{\ell \ne i} p(\ell,v) < \sum_{\ell \ne i}\left(\dfrac{a(\ell,u_1)^D}{\mfS(u_1)} - \frac{\epsilon}{K-1} \right)
\]
and thus there is some $j \in \{1,2,\ldots ,K\}$ with 
\begin{equation*}
    p(j,v)<\frac{a(j,u_{1})^{D}}{\mfS(u_{1})}-\frac{\epsilon}{K-1}.
\end{equation*}
Similar to the first case, with $m_1 = m(D,(u_1,v))$ this results in 
\begin{equation*}
      p(m_1,v)\leq \frac{a(m_1,u_{1})^{D}}{(1+\frac{\epsilon}{K-1})\mfS(u_{1})}
\end{equation*}
and so 
\begin{equation}
        \log (p(m_1,v))   \leq 
         \log \left( \dfrac{a(m_1,u_1)^D}{\mfS(u_1)}\right) -\log (1+\frac{\epsilon}{K-1}).  
   \label{eq:gap_inequality1}
\end{equation}

By the definition of $D$, $\ee_\lambda(\log(\mfS(\lambda))) = 0$ (for any $\lambda = (u,v) \in \cA \times \cW$), so we have
\begin{align*}
        G(D)-D& =\frac{\ee\left(Y(D,\lambda)-\log \left( \dfrac{a(m(D,\lambda),\lambda)^{D}}{\mfS(\lambda)}\right) \right)}{\ee(Z(D,\lambda))}
\\[8pt]
      & =\frac{1}{\ee(Z(D,\lambda))}\int_{\cA}\int_{\cW}\log (p(m(D,\lambda),\lambda))-\log \left( \dfrac{a(m(D,\lambda),\lambda)^{D}}{\mfS(\lambda)}\right) \ dv\ du
\\[8pt]
      & \geq \frac{-1}{\ee(Z(D,\lambda))}\P(\Upsilon_{2})\P(\{p(i,v)\leq t\})\log (1+\epsilon) \\[8pt]
      & +\frac{-1}{\ee(Z(D,\lambda))}\P(\Upsilon_{1})\P(\{p(i,v)>t\})\log (1+\frac{\epsilon}{K-1}) \\[8pt]
      & \geq \frac{1}{\left| \ee(Z(D,\lambda))\right|}\min \left\{ \P(\Upsilon_{2})\log (1+\epsilon),\P(\Upsilon_{1})\log (1+\frac{\epsilon}{K-1})\right\}.
\end{align*}
Notice that (\ref{eq:prob_inequality}) implies that 
\[
    \int_{A \setminus (\Upsilon_1 \cup \Upsilon_2)} \int_\cW \log (p(m(D,\lambda),\lambda))-\log \left( \dfrac{a(m(D,\lambda),\lambda)^{D}}{\mfS(\lambda)}\right) \ dv\ du \le 0,
\]
which, along with (\ref{eq:gap_inequality2}) and (\ref{eq:gap_inequality1}), give the first inequality in the development above.
Since the last line is always strictly positive, the existence of $\delta >0$ is established. 
From this we see that $\odim_{\Phi}\mu_\blambda=\sup_{\theta}G(\theta)\geq G(D)>D+\delta$.

The conclusions regarding the lower dimension and $G'$ are argued similarly.
\end{proof}

Theorem \ref{thm:general_gap_condition} is simpler in the case where there only finitely many different choices of sets of scaling factors.

\begin{corollary} \label{cor:gap_finite} 
Suppose that $\cA$ is finite. 

1. If there is an index $i\in \{1,...,K\}$ and indices $j\neq j'\in \cA$ such that 
\begin{equation*}
     \frac{a(i,j)^{D}}{\sum_{\ell =1}^K a(\ell ,j)^{D}}\neq \frac{a(i,j')^{D}}{\sum_{\ell =1}^K a(\ell ,j')^{D}},
\end{equation*}%
then there are $\delta_1, \delta_2 >0$ so that for any choice of independent
probabilities we have that $G(D)-D>\delta_1$ and $D - G'(D) > \delta_2$.
Thus $\odim_\Phi \mu_\blambda>D+\delta_1$ and $\udim_\Phi \mu_\blambda < D - \delta_2$
almost surely for any large dimension function $\Phi$.

\smallskip

2. If there is no such triple $\{i,j,j'\}$, then there is a choice
of independent probabilities for which $G(D)=D$, $G'(D) = D$ and $\udim_\Phi \mu_\blambda = \odim_\Phi \mu_\blambda=D$ almost surely for any large dimension
function $\Phi$.
\end{corollary}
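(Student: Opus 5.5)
Part 1 will follow directly from Theorem \ref{thm:general_gap_condition}. Since $\cA$ is finite and the model assumes each element of $\cA$ is actually used, every singleton $\{j\}$ with $\cQ(\{j\})>0$ has positive probability. (If some $j\in\cA$ had probability zero, we would discard it, since it does not affect $D$ or the a.s.\ dimensions; the hypothesis is read for indices of positive mass.) Given $i$ and $j\neq j'$ with distinct ratios, we will relabel so that
\[
\frac{a(i,j)^{D}}{\sum_{\ell} a(\ell ,j)^{D}} < \frac{a(i,j')^{D}}{\sum_{\ell} a(\ell ,j')^{D}},
\]
and then set $\Upsilon_1=\{j\}$ and $\Upsilon_2=\{j'\}$. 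Because the ratio depends only on $u$ and not on $v$, the supremum over $\Upsilon_1\times\cW$ and the infimum over $\Upsilon_2\times\cW$ are just these two numbers. Hence $\eta_1<\eta_2$, and Theorem \ref{thm:general_gap_condition} gives $\delta_1,\delta_2$ together with the dimension conclusions. This part is routine.

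For Part 2, the negation of the hypothesis says that for each $i$ the quantity $c_i:=a(i,u)^D/\sum_\ell a(\ell,u)^D$ is the same for all $u\in\cA$ of positive mass. Moreover $\sum_i c_i=1$ and each $c_i>0$. We will take the single (deterministic, hence independent) probability vector $p(i)=c_i$. This choice is valid because each $c_i$ is strictly positive, so $\ee(p(i)^{-t})<\infty$. With this choice, for every $\lambda$ we have
\[
p(m(D,\lambda))=\frac{a(m(D,\lambda),\lambda)^D}{\sum_\ell a(\ell,\lambda)^D},
\]
and similarly for $m'$. Indeed the identity $p(j)=a(j,\lambda)^D/\mfS(\lambda)$ holds for every index $j$, in particular for the maximizing and minimizing ones. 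By Proposition \ref{prop:general_gap} this gives $G(D)=D$ and $G'(D)=D$.

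To conclude that the dimensions equal $D$, we will invoke Theorem \ref{thm:G(t)=t}(ii) and (iv). Since $G(D)=D$, we get $D=\sup_\chi H(\chi)=\odim_\Phi\mu_\blambda$ almost surely. Likewise $G'(D)=D$ gives $\udim_\Phi\mu_\blambda=D$ almost surely. Alternatively, Corollary \ref{cor:D_bound} provides one inequality and the fixed-point characterization provides the other.

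The only delicate point is the bookkeeping about indices of zero $\cQ$-mass in Part 1. I would handle it by stating that $\cA$ is taken to be the support of $\cQ$. Otherwise the argument is a direct specialization of the preceding results.
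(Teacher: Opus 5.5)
Your proposal is correct and follows essentially the paper's proof: Part 1 applies Theorem \ref{thm:general_gap_condition} with $\Upsilon_1=\{j\}$, $\Upsilon_2=\{j'\}$, and Part 2 takes the single vector $p_i = a(i,u)^D/\sum_\ell a(\ell,u)^D$, obtains $G(D)=D=G'(D)$, and concludes via Theorem \ref{thm:G(t)=t}. The paper gets $G(D)=D=G'(D)$ by observing that all ratios $a(i,\lambda)^D/p_i$ coincide, so $m(D,\lambda)=m'(D,\lambda)$, rather than by citing Proposition \ref{prop:general_gap}, but this is the same computation; your restriction of $\cA$ to the support of $\cQ$ is a harmless clarification that the paper leaves implicit.
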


\begin{proof}
For (2), the probabilities
\begin{equation*}
    p_{i}=\frac{a(i,j)^{D}}{\sum_{\ell =1}^K a(\ell ,j)^{D}}
\end{equation*}
 do not vary with $j$ and so $m(D,\lambda) = m'(D,\lambda)$ and thus $G(D) = D = G'(D)$.
Therefore by Theorem \ref{thm:G(t)=t} we have $\udim_\Phi \mu_\blambda = D = \odim_\Phi \mu_\blambda$.

For (1), let $i$, $j$, and $j'$ be such that
\begin{equation*}
    \eta =\left| \frac{a(i,j)^{D}}{\sum_{\ell =1}^K a(\ell ,j)^{D}}-
         \frac{a(i,j')^{D}}{\sum_{\ell=1}^K a(\ell ,j')^{D}}\right| > 0.
\end{equation*}
Without loss of generality we assume that 
\begin{equation*}
    \alpha :=\frac{a(i,j)^{D}}{\sum_{\ell =1}^K a(\ell ,j)^{D}}<\alpha'
             :=\frac{a(i,j')^{D}}{\sum_{\ell =1}^K a(\ell ,j')^{D}}.
\end{equation*}
Then we set $\epsilon =\eta /2$, $\Upsilon_{1}=\{j\}$, and $\Upsilon_{2}=\{j'\}$ and use Theorem \ref{thm:general_gap_condition}.
\end{proof}

Notice that in the situation of part (2) of the Corollary, since 
\[
  D = \udim_\Phi \mu_\lambda = \inf_\chi H(\chi) \le \sup_\chi H(\chi) = \odim_\Phi \mu_\blambda = D,
\]
we have $G(\theta) = G'(\theta) = D$ for all $\theta$ (because any value of either $G$ or $G'$ is $H(\chi)$ for some $\chi$).

\subsection{Attaining the full interval of possible measure dimensions}

In this subsection we will prove that it is possible to attain the full
range of possible measure dimensions, as in the dependent probabilities
case. Moreover, we are able to do this with the random measures arising from
single valid probability vectors.

It will be important to emphasize the dependence of the random measure and $G$ function on the probability weights.
Thus if we use the probability $\cR$ on the weight parameter space $\cW$, we denote the resulting random measure as $\mu_\cR$
and function as $G_\cR$.
In the special case that $\cR$ is supported on a single probability vector $p \in \cW$ we instead write $\mu_p$ and $G_p$.
In this case the choice of probability is deterministic and not random and thus the only 
randomness influencing the measure arises from the random choice of similarities.
We can naturally identify the set of valid single probabilities with a subset of 
$\mathbb{R}^{K}$ and give this set the inherited metric.

We first show that the almost sure $\Phi $-dimensions are continuous maps on
this metric space.

\begin{proposition} \label{prop:single_continuous}
The maps $p\rightarrow \sup_{\theta }G_{p}(\theta )=: M(p)$ and 
$p\rightarrow \inf_{\theta }G_{p}^{\prime }(\theta )=: M'(p)$ on the space of valid
single probabilities are continuous.
\end{proposition}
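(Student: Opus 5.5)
We reduce continuity of $M(p)$ to a supremum over a fixed family of continuous functions. In the single probability case $\cR$ is a point mass at $p$, so $\lambda$ is effectively $u\in\cA$ with law $\cQ$. By Theorem \ref{thm:G(t)=t} (or directly Lemma \ref{lem:H-function} together with its remark), for each valid $p$,
\[
M(p)=\sup_{\chi} H_p(\chi), \qquad H_p(\chi)=\frac{\ee(\log p(\chi(u)))}{\ee(\log a(\chi(u),u))},
\]
where $\chi$ ranges over measurable maps $\cA\to\{1,\dots,K\}$. The key observation is that the numerator decomposes as
\[
\ee(\log p(\chi(u)))=\sum_{i=1}^K \cQ(\chi^{-1}(i))\,\log p_i .
\]
This is a convex combination of the numbers $\log p_i$, with weights $w_i(\chi)=\cQ(\chi^{-1}(i))$ that do not depend on $p$. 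The denominator also does not depend on $p$, and it lies in $[\log A,\log B]$, so it is bounded away from $0$ uniformly in $\chi$.

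With this, the argument runs in three steps.

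\textbf{Step 1 (uniform equicontinuity).} Fix a valid $p^0$, so every $p^0_i>0$. On a neighbourhood $U$ of $p^0$ where every $p_i\ge \min_j p^0_j/2$, the functions $p\mapsto\log p_i$ are uniformly continuous. Hence
\[
|H_p(\chi)-H_{p'}(\chi)|\le \frac{\max_i|\log p_i-\log p'_i|}{|\log B|}
\]
for all $\chi$ and all $p,p'\in U$. The family $\{p\mapsto H_p(\chi)\}_\chi$ is therefore equicontinuous at $p^0$ with a single modulus.

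\textbf{Step 2 (passing to the supremum).} A supremum of a uniformly equicontinuous family of real functions, if finite, is continuous, because $|\sup_\chi f_\chi(p)-\sup_\chi f_\chi(p')|\le\sup_\chi|f_\chi(p)-f_\chi(p')|$. Finiteness follows from Prop. \ref{GBded}, and in fact directly from $|\log p_i|$ being bounded on $U$. So $M$ is continuous at $p^0$.

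\textbf{Step 3 (the lower dimension).} The same argument applies to $M'(p)=\inf_\chi H_p(\chi)$, using part (iii) of Theorem \ref{thm:G(t)=t}.

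The main obstacle is justifying that the supremum and infimum over $\chi$ can be taken over all measurable $\chi$ for each $p$ simultaneously. This needs the equality $\sup_\theta G_p=\sup_\chi H_p(\chi)$ to hold for each $p$ as a deterministic identity, not merely almost surely for a $p$-dependent null set. To secure this, I would extract directly from Lemma \ref{lem:H-function} and the remark after it the deterministic statement: $H_p(\chi)\le \sup_\theta G_p(\theta)$ for every $\chi$, and equality is attained along $\chi=m(\theta,\cdot)$. Both sides are numbers determined by $p$ and $\cQ$, so no measure-zero issue arises.

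A point to check in Step 1 is that validity is an open condition: a neighbourhood of $p^0$ within the simplex stays inside the valid probabilities. This holds because validity only requires $p_i>0$ for all $i$.
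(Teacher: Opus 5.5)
Your proof is correct and is essentially the paper's argument. The paper takes a near-maximizer $\chi=m_q(\theta_q,\cdot)$ for $M(q)$, replaces $\log q$ by $\log p$ in the numerator (whose denominator is bounded away from $0$), invokes Lemma~\ref{lem:H-function} to get $H_p(m_q)\le M(p)$, and then reverses the roles of $p$ and $q$; this is your inequality $|\sup_\chi f_\chi(p)-\sup_\chi f_\chi(p')|\le\sup_\chi|f_\chi(p)-f_\chi(p')|$ written out, with your observation that the numerator is a $p$-independent convex combination of the $\log p_i$ making the uniformity in $\chi$ more transparent.
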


\begin{proof}
Fix $\varepsilon >0$ and valid single probability $q.$ Using the continuity
of the logarithm function, choose $\delta >0$ such that if $%
|p(j)-q(j)|<\delta $ for all $j\in \{1,...,K\}$, then $|\log p(j)-\log
q(j)|<\varepsilon $ for all $j$.

First, take $\theta _{q}$ such that $M(q)\leq G_{q}(\theta _{q})+\varepsilon 
$. As usual, let $m_{q}(\theta _{q},\lambda )$ be the minimal index $m_{q}$
such that 
\begin{equation*}
\frac{a(m_{q},\lambda )^{\theta }}{q(m_{q},\lambda )}=\max_{j}\frac{a(j,\lambda )^{\theta }}{q(j,\lambda )}.
\end{equation*}

Then
\begin{equation*}
      G_{q}(\theta _{q})=
    \frac{\mathbb{E(}\log q(m_{q},\lambda ))}{\mathbb{E(}\log a(m_{q},\lambda ))}
           \leq \frac{\mathbb{E(}\log p(m_{q},\lambda ))}{\mathbb{E(}\log a(m_{q},\lambda ))}+
 \frac{\varepsilon }{\left\vert \mathbb{E(}\log a(m_{q},\lambda ))\right\vert },
\end{equation*}
for any $p$ satisfying $\max_j |p(j) - q(j)| < \delta$.

Applying Lemma \ref{lem:H-function} with $\chi =m_{q}$ and recalling that 
$a(j,\lambda )\leq 1-A$ for all $j,\lambda$, we obtain
\begin{equation*}
   G_{q}(\theta _{q})\leq \sup_{\theta }G_{p}(\theta )+\frac{\varepsilon }{|\log 1-A|}.
\end{equation*}
Thus 
\begin{equation*}
M(q)\leq G_{q}(\theta _{q})+\varepsilon \leq M(p)+C\varepsilon .
\end{equation*}

But we can also reverse this argument. Again take any probability $p$ satisfying 
 $|p(j)-q(j)|<\delta $ for all $j$. 
Choose $\theta _{p}$ so that $M(p)\leq G_{p}(\theta _{p})+\varepsilon $ and use 
$m_{p}$ and Lemma \ref{lem:H-function} to deduce, as above, that 
\begin{equation*}
    M(p)\leq G_{p}(\theta _{p})+\varepsilon \leq \sup_{\theta }G_{q}(\theta ) +
     \frac{\varepsilon }{|\log 1-A|}=M(q)+C\varepsilon .
\end{equation*}
Thus 
\begin{equation*}
            |M(p)-M(q)|\text{ }<C\varepsilon ,
\end{equation*}
consequently, $M$ is continuous.

The argument for $M^{\prime }$ is symmetric.
\end{proof}

\begin{remark}
We remark that the same conclusion holds if we take $\mathcal{W}$ to be a
finite set, rather than a single element.
\end{remark}

\begin{corollary}
The maps $p\rightarrow \odim_{\Phi }\mu _{p}$ and 
$p\rightarrow \udim_{\Phi }\mu _{p}$ are continuous 
(where here by the $\Phi $-dimensions we mean the almost sure dimensions).
\end{corollary}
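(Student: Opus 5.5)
This corollary follows by combining Proposition \ref{prop:single_continuous} with the almost sure identification of the dimensions from Theorem \ref{SupThm}. For each valid single probability vector $p$, Theorem \ref{SupThm} (applied to the model with $\cR$ a point mass at $p$) gives a full-measure set on which
\[
\odim_\Phi \mu_p = \sup_{\theta \ge 0} G_p(\theta) = M(p)
\quad\text{and}\quad
\udim_\Phi \mu_p = \inf_{\theta\ge 0} G'_p(\theta) = M'(p).
\]
The statement concerns these almost sure values. The first step is therefore to verify that the almost sure value is well defined as a function of $p$. It is: the value is the deterministic number $M(p)$ (respectively $M'(p)$), independent of the exceptional null set, which may depend on $p$. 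This is why the map $p \mapsto \odim_\Phi\mu_p$ is a genuine function on the space of valid single probabilities.

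The second step is to note that this function coincides pointwise with $M$ (respectively $M'$). Proposition \ref{prop:single_continuous} says $M$ and $M'$ are continuous with respect to the metric inherited from $\R^K$, so continuity transfers immediately. One should also confirm that the hypotheses of Theorem \ref{SupThm} and Proposition \ref{GBded} hold for a single valid $p$. They do: validity means every $p(j)>0$, so $\ee(p(j)^{-t})=p(j)^{-t}<\infty$ for any $t>0$, and the technical integrability condition is met.

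The only point requiring care is the interpretation of "the $\Phi$-dimension" as the almost sure value rather than a random quantity, so that no uniformity of the null sets over $p$ is needed. Beyond this, there is no real obstacle. If one also wanted continuity for finitely supported $\cR$, the same argument would apply using the Remark following Proposition \ref{prop:single_continuous}.
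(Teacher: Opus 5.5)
Your proposal is correct and is essentially the paper's own argument: Theorem~\ref{SupThm} identifies the almost sure values $\odim_\Phi\mu_p$ and $\udim_\Phi\mu_p$ with $M(p)$ and $M'(p)$, and continuity is then inherited from Proposition~\ref{prop:single_continuous}. Your additional check that a valid single $p$ satisfies the integrability hypothesis is correct and harmless, though the paper does not spell it out.
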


\begin{proof}
According to Theorem \ref{SupThm}, $M(p)=\odim_{\Phi }\mu _{p}$
a.s. and similarly for the lower dimension.
\end{proof}

Using this result, we will next prove that the minimal possible upper $\Phi $
measure dimension is attained by a measure arising from a single probability
vector.

\begin{lemma}
Let $d = \inf_{p } \odim_\Phi \mu_p$, where the infimum is over all valid single probabilities.  
Then for any distribution $\cR$ on $\cW$, we have $d \le \odim_\Phi \mu_\cR$.
\end{lemma}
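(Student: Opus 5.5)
The plan is to work with the characterization $\odim_\Phi\mu = \sup_\chi H(\chi)$ from Theorem \ref{thm:G(t)=t}. Let $t := \odim_\Phi \mu_\cR = \sup_\theta G_\cR(\theta)$, which is finite by Prop. \ref{GBded}. I will produce a single valid probability vector $p$ with $\odim_\Phi \mu_p \le t$; this gives $d \le t$ immediately. The key reformulation is that, since $\ee(\log a(\chi,\lambda))<0$, the inequality $H(\chi)\le t$ is equivalent to
\[
\ee\bigl(\log p(\chi,\lambda) - t\log a(\chi,\lambda)\bigr) \ge 0 .
\]

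\textbf{Step 1.} Apply this to $\cR$ with the choice $\chi(\lambda) = m'(t,\lambda)$. This is the index that minimizes $p(j,\lambda)/a(j,\lambda)^t$, equivalently minimizes $\log p(j,v) - t\log a(j,u)$. Since $H_\cR(\chi) \le \sup_{\chi} H_\cR(\chi) = t$ by Lemma \ref{lem:H-function}, we obtain
\[
\ee_{\cQ\times\cR}\Bigl(\min_{j}\bigl[\log p(j,v) - t\log a(j,u)\bigr]\Bigr) \ge 0 .
\]

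\textbf{Step 2.} Use independence. Because $\P=\cQ\times\cR$ is a product measure, Fubini lets me write the left side as $\int_\cW F(p(\cdot,v))\,d\cR(v)$, where for a probability vector $q$ I set
\[
F(q) := \int_\cA \min_j\bigl[\log q_j - t\log a(j,u)\bigr]\,d\cQ(u).
\]
The integrand is integrable: the $\log a$ terms are bounded, and the $\log p$ terms are integrable by the argument in Prop. \ref{GBded}, using $\ee(p(i,\lambda)^{-t'})<\infty$. Since $\int F\,d\cR \ge 0$, the set of $v$ with $F(p(\cdot,v))\ge 0$ has positive $\cR$-measure. The set where some $p(i,v)=0$ is $\cR$-null, so I can pick such a $v$ with $p(\cdot,v)$ a valid single probability. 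Call this vector $p$.

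\textbf{Step 3.} For the single probability $p$, every $\chi$ is a function of $u$ only, and
\[
\ee\bigl(\log p_{\chi} - t\log a(\chi,u)\bigr) \ge F(p) \ge 0 ,
\]
since the integrand dominates the pointwise minimum. Hence $H_p(\chi)\le t$ for all $\chi$. By Theorem \ref{thm:G(t)=t}(i), $\odim_\Phi\mu_p = \sup_\chi H_p(\chi) \le t$. Therefore $d \le \odim_\Phi\mu_p \le \odim_\Phi\mu_\cR$.

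The main point to get right is Step 2. This is exactly where independence is used: for fixed $v$, the inner integral over $u$ must be an average over the same marginal $\cQ$ that governs $\mu_p$. The remaining work in that step is routine measurability and integrability of $F$ and the validity of the chosen $v$. In the dependent case this step fails, which is consistent with the gap phenomenon of Theorem \ref{thm:general_gap_condition} not being forced there.
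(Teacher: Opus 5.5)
Your argument is correct, apart from one labelling slip in Step 1. By (\ref{eq:mprime_definition}), $m'(t,\lambda)$ minimizes $a(j,\lambda)^t/p(j,\lambda)$, so it \emph{maximizes} $\log p(j,v)-t\log a(j,u)$. The pointwise minimizer you need is $m(t,\lambda)$ from (\ref{eq:m_definition}). With that choice you get $H_\cR(\chi)=G_\cR(t)\le\sup_\theta G_\cR(\theta)=t$ directly. Nothing else in your argument changes, since you only use that $\chi$ realizes $\min_j$.

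The paper uses the same averaging mechanism but runs it in the opposite logical direction.
\begin{itemize}
\item It fixes $\psi<d$ and uses the contrapositive of Theorem \ref{MainDimThm}(i) to get $G_p(\psi)\ge\psi$ for every single probability $p=p(\cdot,v)$.
\item It writes this as the linear inequality $\psi\,|\ee_\cQ\log a(m(\psi,u,v),u)|\le|\ee_\cQ\log p(m(\psi,u,v),v)|$ and integrates against $\cR$ to obtain $G_\cR(\psi)\ge\psi$.
\item It applies Theorem \ref{MainDimThm}(i) again to conclude $\odim_\Phi\mu_\cR\ge\psi$.
\end{itemize}
So the paper argues ``for every $v$, hence on average'' at each level $\psi<d$. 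You argue ``on average, hence for some $v$'' at the single level $t=\odim_\Phi\mu_\cR$. You use the pointwise-minimum functional $F$ to control every $\chi$ at once, then the characterization $\odim_\Phi\mu_p=\sup_\chi H_p(\chi)$.

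The key fact is the same in both. Under $\P=\cQ\times\cR$, for fixed $v$ the extremal index $m(\theta,(u,v))$ is exactly the one for the single-probability model $p(\cdot,v)$. Hence the relevant quantity is an $\cR$-average of single-probability quantities.

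The paper's version is shorter and needs only Theorem \ref{MainDimThm}. Yours needs the Fubini and integrability bookkeeping plus the extraction of a positive-measure set of $v$. In return it gives a slightly stronger, more concrete conclusion: a set of $v$ of positive $\cR$-measure, each giving a valid single probability $p(\cdot,v)$ with $\odim_\Phi\mu_{p(\cdot,v)}\le\odim_\Phi\mu_\cR$. Validity of these $v$ is in fact automatic, since $F(q)=-\infty$ whenever some $q_j=0$.
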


\begin{proof}
Let $\psi < d$.   By part (i) of Theorem \ref{MainDimThm}, we must have that $\psi \le G_p(\psi)$ for any $p \in \cW$.
Thus for any fixed $v \in \cW$ we have that
\begin{equation*}
   \psi \int_\cA \left| \log a(m(\psi,u,v),u)\right| d\cQ(u) \leq
            \int_\cA \left| \log p(m(\psi,u,v),v)\right| d\cQ(u) 
\end{equation*}
and thus for any distribution $\cR$ on $\cW$ 
\begin{equation*}
    \psi \int_\cW \int_\cA | \log a(m(\psi,u,v),u) | d\cQ(u) d\cR(v) \le \int_\cW \int_\cA \left| \log p(\psi,m(\psi,u,v),v)\right| d\cQ(u) d\cR(v).
\end{equation*}
Hence $\psi \le G_\cR(\psi)$ which again by Theorem \ref{MainDimThm} means that $\odim_\Phi \mu_\cR \ge \psi$ for any $\cR$.
Since this is true for all $\psi < d$, we have $d \le \odim_\Phi \mu_\cR$ for any $\cR$ as claimed.
\end{proof}

\begin{corollary}
We have $\inf_p \odim_\Phi \mu_p = \inf_{\cR} \odim_\Phi \mu_\cR$  and also $\sup_p \udim_\Phi \mu_p = \sup_{\cR} \udim_\Phi \mu_\cR$ almost surely.
\end{corollary}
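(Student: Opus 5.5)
The first equality follows directly from the preceding Lemma. The second equality comes from the symmetric lower-dimension version of that Lemma, which I will prove. In each case one inequality is trivial, because every valid single probability $p$ is a special case of a distribution $\cR$, namely the point mass at $p$. Hence $\inf_{\cR}\odim_\Phi\mu_\cR \le \inf_p \odim_\Phi\mu_p$ and $\sup_p\udim_\Phi\mu_p\le \sup_\cR \udim_\Phi\mu_\cR$, with all dimensions understood as their almost sure values (Theorem \ref{SupThm}). The reverse inequality for the upper dimension is exactly the Lemma: $d=\inf_p\odim_\Phi\mu_p\le \odim_\Phi\mu_\cR$ for every $\cR$, so $d\le\inf_\cR\odim_\Phi\mu_\cR$.

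For the lower dimension, let $d'=\sup_p\udim_\Phi\mu_p$. I want $\udim_\Phi\mu_\cR\le d'$ for every $\cR$, and I will mirror the Lemma's proof using part (ii) of Theorem \ref{MainDimThm}. Take $\psi>d'$. For every single $p$, the almost sure value satisfies $\udim_\Phi\mu_p\le d'<\psi$. If $G'_p(\psi)>\psi$ held, then Theorem \ref{MainDimThm}(ii) would give $\udim_\Phi\mu_p\ge\psi$, a contradiction. Hence $G'_p(\psi)\le\psi$ for every $v\in\cW$. Writing this with absolute values (both logarithms are negative) gives
\[
\int_\cA |\log p(m'(\psi,u,v),v)|\,d\cQ(u)\le \psi\int_\cA|\log a(m'(\psi,u,v),u)|\,d\cQ(u).
\]
This uses the fact that, in the product setting, $m'$ for the pair $(u,v)$ is exactly the index used by the single vector $p=p(\cdot,v)$.

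Next I integrate this inequality in $v$ against $\cR$ and apply Fubini to the product measure $\cQ\times\cR$. This gives $\ee(Y'(\psi,\lambda))$ and $\ee(Z'(\psi,\lambda))$ for the model $\cQ\times\cR$, so $G'_\cR(\psi)\le\psi$. Theorem \ref{MainDimThm}(ii) then yields $\udim_\Phi\mu_\cR\le\psi$ almost surely. Taking $\psi\downarrow d'$ along a countable sequence, so that the intersection of the full-measure sets still has full measure, gives $\udim_\Phi\mu_\cR\le d'$.

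The only points needing care are measure-theoretic, and I do not expect a genuine obstacle. First, the integrals must be finite so that the integrated inequality makes sense. This holds by the standing moment assumption on $\cR$ together with Proposition \ref{GBded}. Second, $v$ ranges over valid probabilities, which is true $\cR$-almost surely because $p(i,\lambda)\ne0$ almost surely. The $v$ with some $p(i,v)=0$ form an $\cR$-null set and can be discarded before integrating.
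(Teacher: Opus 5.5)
Your proof is correct and follows the paper's route: point masses give the trivial inequality, and the preceding Lemma gives the reverse inequality for the upper dimension. For the lower dimension, the paper only says the argument is symmetric; you write out that mirrored argument correctly via Theorem \ref{MainDimThm}(ii), including the points about null sets of invalid $v$.
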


\begin{proof}
Of course $\inf_{\cR} \odim_\Phi \mu_\cR \le \inf_p \odim_\Phi \mu_p$ since point masses are special cases of general
distributions, hence the corollary is an immediate consequence of the previous proposition.
\end{proof}

It is also not hard to see that we can arrange to have arbitrarily large upper dimensions and arbitrarily small lower dimensions.

\begin{proposition}
\label{prop:approachzeroandinfinity}
Given $N$, there exists $\varepsilon >0$ such that if $p$ is any valid
single probability satisfying $\min p(j)<\varepsilon$ (or $\max p(j)>1-\varepsilon$), 
then $\odim_\Phi \mu_p > N$ (resp., $\udim_\Phi \mu_p<1/N$).
\end{proposition}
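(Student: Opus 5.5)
We will use Lemma \ref{lem:H-function} (equivalently, Theorem \ref{thm:G(t)=t}). For a single probability vector $p$ it gives, almost surely,
\[
\udim_\Phi \mu_p=\inf_\chi H(\chi)\le H(\chi)\le \sup_\chi H(\chi)=\odim_\Phi\mu_p
\]
for every iid index choice $\chi$. So it suffices to exhibit one $\chi$ with $H(\chi)$ large, or small, as appropriate. The natural choice is the \emph{constant} index function. For the upper dimension, let $j_0$ be an index with $p(j_0)=\min_j p(j)<\varepsilon$ and take $\chi\equiv j_0$. Then
\[
H(\chi)=\frac{\log p(j_0)}{\ee(\log a(j_0,\lambda))}.
\]
Since $A\le a(j_0,\lambda)\le B<1$, the denominator satisfies $|\ee(\log a(j_0,\lambda))|\le |\log A|$. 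Hence
\[
\odim_\Phi \mu_p\ge H(\chi)\ge \frac{|\log p(j_0)|}{|\log A|}>\frac{|\log \varepsilon|}{|\log A|}.
\]
Choosing $\varepsilon<A^{N}$ makes this exceed $N$. Note that $\varepsilon$ depends only on $N$ and the uniform constant $A$, not on $p$ or on the distribution of the scaling factors, as the statement requires.

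For the lower dimension, suppose $p(j_1)=\max_j p(j)>1-\varepsilon$ and take $\chi\equiv j_1$. Then
\[
\udim_\Phi\mu_p\le H(\chi)=\frac{|\log p(j_1)|}{|\ee(\log a(j_1,\lambda))|}\le \frac{|\log(1-\varepsilon)|}{|\log B|},
\]
using $|\log a|\ge|\log B|$. Since $|\log(1-\varepsilon)|\to0$ as $\varepsilon\to 0$, we can pick $\varepsilon$ small enough, depending only on $N$ and $B$, that this is below $1/N$.

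The only step needing care is the applicability of Lemma \ref{lem:H-function} to a constant $\chi$ with single, deterministic weights. This is a trivial iid choice, and the law-of-large-numbers limit in the lemma holds because $\log a(j_0,\lambda_n)$ is iid and bounded. Validity of $p$ ensures $\log p(j_0)$ is finite. Alternatively, one can avoid the lemma and argue directly from Theorem \ref{SupThm}. At $\theta=0$, the index $m(0,\lambda)$ maximizes $1/p(j)$, so it is the argmin of $p$ (up to the minimal-index tie convention, which does not change the value of $p$). Thus
\[
G_p(0)=\frac{\log\min_j p(j)}{\ee(\log a(m,\lambda))}\ge \frac{|\log\varepsilon|}{|\log A|},
\]
and the analogous computation with $G'_p(0)$ handles the lower dimension. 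We would present this direct version, as it needs only Theorem \ref{SupThm}.
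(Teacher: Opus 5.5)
Your proposal is correct and matches the paper's proof: the paper also applies Lemma \ref{lem:H-function} to the constant index $\chi\equiv j_0$ and bounds $|\ee(\log a(j_0,\lambda))|$ uniformly. Your explicit choices ($\varepsilon<A^N$, and the $|\log B|$ bound for the lower case) just make concrete the paper's constant $C$ and its ``similar'' lower-dimension case. Your $G_p(0)$ variant via Theorem \ref{SupThm} is the same computation, since for a single weight vector $m(0,\lambda)\equiv j_0$ and hence $G_p(0)=H(j_0)$.
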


\begin{proof}
Take $C>0$ such that $1/| \ee (\log a(j,\lambda))| \geq C$ for all 
$j,\lambda $ and choose $\varepsilon >0$ so
that $C|\log (\varepsilon )|>N$.

Let $\min p(j)=p(j_{0})$ and suppose $p(j_{0})<\varepsilon$. 
Put $\chi=j_{0}$. 
In the notation of Lemma \ref{lem:H-function}, 
\begin{equation*}
   H(\chi )=\frac{|\ee \log p(j_{0})|}{|\ee (\log a(j_{0},u)|}\geq
    C|\log (\varepsilon )|>N
\end{equation*}%
and as that lemma tells us $H(\chi )\leq \sup G_{p}(\theta )=\odim_\Phi \mu _{p}$ 
a.s. we obtain the desired conclusion for the upper $\Phi$-dimension.

The lower dimension result is similar. 
\end{proof}

\begin{proposition} \label{prop:min_single}
There is a valid single probability $p_{0}$ such that $d=\odim_{\Phi}\mu_{p_{0}}$ a.s.
\end{proposition}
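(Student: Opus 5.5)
The proof is a compactness argument: the infimum $d=\inf_p \odim_\Phi \mu_p$ is taken over valid single probabilities, and we show it is attained by combining continuity of $M(p)=\sup_\theta G_p(\theta)$ (Proposition \ref{prop:single_continuous}) with the blow-up of $M$ near the boundary of the simplex (Proposition \ref{prop:approachzeroandinfinity}). Recall from Theorem \ref{SupThm} that $\odim_\Phi \mu_p = M(p)$ almost surely for each valid $p$. The set of valid single probabilities is the open simplex $\{p: p(j)>0, \sum_j p(j)=1\}$, which is not compact. The plan is therefore to confine the infimum to a compact subset on which $M$ is continuous.

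First, fix any valid $p^*$, for example the uniform vector $p^*(j)=1/K$, so that $d\le M(p^*)<\infty$; finiteness follows from Proposition \ref{GBded}. Choose an integer $N>M(p^*)$. Proposition \ref{prop:approachzeroandinfinity} then gives $\varepsilon>0$ such that every valid $p$ with $\min_j p(j)<\varepsilon$ has $M(p)>N>M(p^*)\ge d$. Consequently
\[
  d=\inf\{M(p): p \in \mathcal{K}_\varepsilon\}, \qquad \mathcal{K}_\varepsilon=\{p: p(j)\ge \varepsilon \ \forall j,\ \textstyle\sum_j p(j)=1\}.
\]
The set $\mathcal{K}_\varepsilon$ is a closed and bounded subset of $\R^K$, hence compact, and it consists entirely of valid probabilities. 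It is nonempty, since shrinking $\varepsilon$ if necessary (we may assume $\varepsilon\le 1/K$) puts the uniform vector in it.

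By Proposition \ref{prop:single_continuous}, $M$ is continuous on the valid probabilities and so on $\mathcal{K}_\varepsilon$. A continuous function on a nonempty compact set attains its infimum, so there is $p_0\in\mathcal{K}_\varepsilon$ with $M(p_0)=d$. Then $\odim_\Phi\mu_{p_0}=M(p_0)=d$ almost surely by Theorem \ref{SupThm}. Combined with the preceding Lemma, this also shows $d$ is the minimum over all distributions $\cR$.

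The only real point needing care is that the almost sure value $\odim_\Phi\mu_p$ is the deterministic number $M(p)$, so that the infimum over the uncountably many $p$ is an infimum of real numbers and no union of null sets is involved. Theorem \ref{SupThm} already supplies this, so the main obstacle reduces to the boundary blow-up, which Proposition \ref{prop:approachzeroandinfinity} handles directly.
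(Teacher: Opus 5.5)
Your proposal is correct and is essentially the paper's argument. The paper takes a minimizing sequence $p_n$, uses Proposition \ref{prop:approachzeroandinfinity} to confine it eventually to $[\varepsilon,1-\varepsilon]^K$, extracts a convergent subsequence with valid limit $p_0$, and gets $M(p_0)=d$ from the continuity in Proposition \ref{prop:single_continuous}; you package the same ingredients as the extreme value theorem on the compact set $\mathcal{K}_\varepsilon$, and your explicit choice of $N>M(p^*)$ makes precise the boundedness step that the paper leaves implicit.
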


\begin{proof}
For each valid single probability $p$, let $M(p)=\sup_{\theta}G_{p}(\theta) = \odim_\Phi \mu_p$. 
Choose probabilities $p_{n} \in \cW$ such that $M(p_{n})\rightarrow \inf_p M(p)=d$. 
It follows from Proposition \ref{prop:approachzeroandinfinity} that there exists $\varepsilon >0$ such that 
$p_{n}\in [\varepsilon ,1-\varepsilon ]^K$ for large enough $n$,  consequently $(p_{n})$ has a
convergent subsequence (not renamed) with limit $p_{0}$ which is a valid probability.

However, as $M$ is continuous by Proposition \ref{prop:single_continuous}, $M(p_{n})$ converges to $M(p_{0}),$ whence $M(p_{0})=d$.
\end{proof}

The situation for the lower dimension is the same but the proof is more delicate.
For the upper dimension, Proposition \ref{prop:approachzeroandinfinity} shows that a small $p(i)$ leads to a large dimension, which allows for control of both uniform upper and lower bounds on $p(i)$.
However, just because $\max_i p(i) \le 1-\delta$ does not mean that $\min_i p(i) > 0$ and so the argument for the lower dimension is more involved.

\begin{proposition} \label{prop:max_single}
There exists a valid single probability $q$ such that 
$M^{\prime}(q)=\sup_{p}M^{\prime }(p)$ where the supremum is taken over all valid
single probabilities $p$.
\end{proposition}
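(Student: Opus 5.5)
The plan is to take a maximizing sequence, pass to a limit in the simplex, and then repair the limit if some of its coordinates vanish. Let $s=\sup_p M'(p)$, where the supremum is over valid single probabilities. Proposition \ref{prop:general_gap} gives $s\le D<\infty$. Choose valid $p_n$ with $M'(p_n)\to s$. By Proposition \ref{prop:approachzeroandinfinity}, $\max_j p_n(j)\le 1-\varepsilon$ for large $n$. Passing to a subsequence, $p_n\to q$ in the simplex with $\max_j q(j)<1$. If $q$ is valid, Proposition \ref{prop:single_continuous} gives $M'(q)=s$ and we are done. The real issue, as noted before the statement, is that $J=\{j: q(j)=0\}$ may be nonempty. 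Note $|J^c|\ge 2$ because $\max_j q(j)<1$.

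\emph{Step 1: $s$ is bounded by a reduced-model quantity.} Consider the reduced model that uses only the maps $S_j(u)$ with $j\in J^c$ and the weights $q|_{J^c}$. These weights sum to $1$, are positive, and the reduced IFSs are still strongly separated. For any iid $\chi$ taking values in $J^c$, Lemma \ref{lem:H-function} applied to the full model with weights $p_n$ gives $M'(p_n)\le H_{p_n}(\chi)$. Since $p_n(j)\to q(j)>0$ for $j\in J^c$, we have $H_{p_n}(\chi)\to H_q(\chi)$. Hence
\[
s\le \inf_{\chi\in J^c} H_q(\chi) = \inf_\theta G'_{q,J}(\theta),
\]
where $G'_{q,J}$ is the $G'$ function of the reduced model and the equality is Theorem \ref{thm:G(t)=t}(iii) applied to the reduced model. (If the constraint $\sum a\le B$ in the reduced model is a concern, note that only the formulas for $H$ and $G'$ are used, not any measure construction.)

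\emph{Step 2: build a valid perturbation.} For small $\delta>0$ set $q'(j)=(1-\delta)q(j)$ for $j\in J^c$ and $q'(j)=\delta/|J|$ for $j\in J$; this $q'$ is valid. For $\psi\in[0,D]$, every $j\in J$ and every $\lambda$ we have $a(j,\lambda)^\psi/q'(j)\ge A^{D}|J|/\delta$. For $k\in J^c$, $a(k,\lambda)^\psi/q'(k)\le 1/((1-\delta)\min_{J^c}q)$. So for $\delta$ small, uniformly in $\psi\in[0,D]$ and $\lambda$, the minimizing index $m'_{q'}(\psi,\lambda)$ lies in $J^c$ and coincides with the reduced-model index. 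Both $q'$ and $q|_{J^c}$ are multiples of the same vector on $J^c$, so this index is unchanged. Therefore
\[
G'_{q'}(\psi)=\frac{\ee(\log q(m'))+\log(1-\delta)}{\ee(\log a(m'))}\ \ge\ G'_{q,J}(\psi)\ \ge\ s,
\]
because the extra term $\log(1-\delta)/\ee(\log a(m'))$ is a negative number divided by a negative number.

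\emph{Step 3: conclude.} For every $\psi<s\,(\le D)$, Step 2 gives $G'_{q'}(\psi)\ge s>\psi$. Theorem \ref{MainDimThm}(ii) then yields $\udim_\Phi\mu_{q'}\ge\psi$ almost surely. Hence $M'(q')\ge s$, and by the definition of $s$ equality holds, so $q'$ is the required valid single probability.

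The main obstacle is Step 2: controlling $m'$ uniformly so that the tiny new weights on $J$ are never selected. This is why I restrict to $\psi\le D$ and use Theorem \ref{MainDimThm}(ii) rather than the full infimum over all $\theta$. For large $\theta$ the factor $(B/A)^\theta$ could let indices in $J$ become minimizers, and that range is simply not needed.
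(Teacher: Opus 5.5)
Your proof is correct, and it takes a somewhat different route from the paper's. The ingredients are the same: moving a little mass onto the vanishing coordinates $J$, the bound $a(j,\lambda)^\psi\ge A^\psi$ that keeps $m'$ out of $J$, Lemma \ref{lem:H-function}, and Theorem \ref{MainDimThm}(ii) applied pointwise so that large $\theta$ never has to be controlled. The difference is in how they are assembled.

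\begin{itemize}
\item \textbf{The paper} perturbs every $p_n$ to $q_n$. It checks $m'_{q_n}(\Delta_n,\lambda)\notin J$ only at the single exponent $\Delta_n=M'(p_n)$, which is why it needs the bookkeeping $|A^{\Delta_n}-A^{\Delta}|<\varepsilon^2$. It deduces $M'(q_n)\ge\Delta_n$, and then needs a second compactness step plus the continuity of $M'$ (Proposition \ref{prop:single_continuous}) to produce the maximizer.
\item \textbf{You} pass Lemma \ref{lem:H-function} to the limit once, perturb the limit point itself, and control $m'_{q'}$ uniformly for $\psi\in[0,D]$. This gives an explicit maximizer $q'$ with no further limiting step.
\end{itemize}

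Two remarks. First, invoking Theorem \ref{thm:G(t)=t}(iii) for the reduced model is unnecessary. You only use the trivial inequality $G'_{q,J}(\psi)=H_q(m'_J(\psi,\cdot))\ge\inf_\chi H_q(\chi)$, so there is no need to check the reduced model's hypotheses (they do hold anyway).

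Second, your Step 2 gives more than you use. The added term $\log(1-\delta)/\ee(\log a(m'))$ is at least $c_0=|\log(1-\delta)|/|\log A|>0$ uniformly in $\psi\in[0,D]$, so $G'_{q'}\ge s+c_0$ on $[0,D]$.
\begin{itemize}
\item If $s<D$, Theorem \ref{MainDimThm}(ii) yields $M'(q')\ge\min(s+c_0/2,D)>s$, contradicting the definition of $s$.
\item If $s=D$, then $G'_{q'}(D)\ge D+c_0$ contradicts $G'_{q'}(D)\le D$ from Proposition \ref{prop:general_gap}.
\end{itemize}
So the case $J\neq\emptyset$ never actually occurs. Every limit point of a maximizing sequence is already a valid probability, and continuity alone finishes the argument. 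Your case analysis is still logically sound; it simply handles a vacuous case.
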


\begin{proof}
Let $\Delta =\sup_{p}M^{\prime }(p)$ and choose valid single probabilities 
$p_{n}$ such that $\Delta _{n} := M^{\prime }(p_{n})\rightarrow \Delta$.
By passing to a subsequence (not renamed) we can assume $p_{n}\rightarrow p$.
Certainly, $p$ is a probability and if it is a valid probability then since $M^{\prime }$ 
is continuous we have $M^{\prime }(p)=\lim M^{\prime}(p_{n})=\Delta$ 
and we are done. 

So assume otherwise. 
We know from the Proposition \ref{prop:approachzeroandinfinity} that there exists $N_1$ and
$0<\delta <1$ such that for all $n\geq N_{1}$
and for all $j,$ $p_{n}(j)\leq 1-\delta $ and hence also $p(j)\leq 1-\delta$. 
Consequently, if $p$ is not valid it must be that the set $J=\{i:p(i)=0\}$ is not empty. 
Of course, as $p$ is a probability $J^{c}$ is also not empty.

Let $\eta =\min_{j\notin J}p(j)>0$. Now choose $\varepsilon >0$ so small
that 
\begin{equation*}
    K\varepsilon <\frac{\eta }{4}\text{ and }\frac{A^{\Delta }}{2\varepsilon }>\frac{4}{\eta }.
\end{equation*}

Pick $N_{2}\geq N_{1}$ such that for all $n\geq N_{2}$, 
$\left\vert p_{n}(i)-p(i)\right\vert <\varepsilon$ for all $i$ and 
$\left\vert A^{\Delta _{n}}-A^{\Delta }\right\vert <\varepsilon^2$. 
For $n\geq N_{2}$ we define 
\begin{equation*}
   q_{n}(i)=\left\{ \begin{array}{cc}
      p_{n}(i)+\varepsilon  & \text{if }i\in J \\ 
      p_{n}(i)-\frac{\varepsilon \left\vert J\right\vert }{K-\left\vert J\right\vert } & \text{if }i\notin J%
\end{array}%
\right. .
\end{equation*}
Clearly, if $i\in J$ then $\varepsilon \leq $ $q_{n}(i)\leq 2\varepsilon \leq \eta $ $\leq 1-\delta $, 
while if $i\notin J$ then $q_{n}(i)\leq p_{n}(i)\leq 1-\delta$.   
Furthermore, it is easy to see that if $i\notin J$, then  
\begin{equation*}
    q_{n}(i)>p(i)-\varepsilon -\frac{\varepsilon \left\vert J\right\vert }{K-\left\vert J\right\vert }
          >\frac{\eta }{2}\geq \varepsilon.
\end{equation*}
Since also $\sum_{i}q_{n}(i)=1,$ each  $q_{n}$ is a valid probability and
even satisfies  $q_{n}(j)\in [\varepsilon ,1-\delta ]$. 

We have 
\begin{equation*}
    \frac{A^{\Delta _{n}}}{2\varepsilon }>\frac{A^{\Delta }-\varepsilon ^{2}}{2\varepsilon }>
      \frac{4}{\eta }-\frac{\varepsilon }{2}>\frac{2}{\eta }.
\end{equation*}
Thus if $n\geq N_{2}$ and $i\in J$, then
\begin{equation*}
    \frac{a^{\Delta _{n}}(i,\lambda )}{q_{n}(i)}=
        \frac{a^{\Delta _{n}}(i,\lambda)}{p_{n}(i)+\varepsilon }
      \geq \frac{a^{\Delta _{n}}(i,\lambda )}{2\varepsilon }
         \geq \frac{A^{\Delta _{n}}}{2\varepsilon }>\frac{2}{\eta },
\end{equation*}
while if $i\notin J$, then 
\begin{equation*}
    \frac{a^{\Delta _{n}}(i,\lambda )}{q_{n}(i)}\leq 
        \frac{a^{\Delta_{n}}(i,\lambda )}{^{\eta /2}} < \frac{2}{\eta }.
\end{equation*}
This shows that $m_{n}^{\prime } := m_{q_{n}}^{\prime }(\Delta_{n},\lambda )\notin J$ 
for any $\lambda$. Consequently, 
\begin{eqnarray*}
     \Delta _{n} &=&\underline{\dim }_{\Phi }\mu _{p_{n}}\leq
     H_{p_{n}}(m_{n}^{\prime })=
     \frac{\ee(\log p_{n}(m_{n}^{\prime }))}{\ee(\log a(m_{n}^{\prime },\lambda ))}\\
    &=&\frac{\ee( \log\left( q_{n}(m_{n}^{\prime })+\frac{\varepsilon |J|}{K- |J| }\right))}
           {\ee(\log a(m_{n}',\lambda ))}
        <\frac{\ee(\log q_{n}(m_{n}^{\prime }))}{\ee(\log a(m_n',\lambda ))}
                =G_{q_n}'(\Delta_n).
\end{eqnarray*}
Since $G_{q_{n}}^{\prime }(\Delta _{n})>\Delta _{n}$, Theorem \ref{MainDimThm}
implies that  $\underline{\dim }_{\Phi }\mu _{q_{n}}\geq \Delta _{n} \to \Delta$. 
But, of course, $M^{\prime }(q_{n})= \udim_\Phi \mu_{q_n} \leq \sup_{p} \udim_\Phi \mu_p=\Delta$.

As $q_{n}(j)\in [\varepsilon ,1-\delta]$ for all $j$, a subsequence
(not renamed) converges to a valid probability $q$ and by continuity, 
$M'(q_n) \to M'(q)$. 
Thus  $M'(q)=\Delta$ for a single valid probability $q$, as we desired to prove.
\end{proof}

\begin{theorem} 
\label{thm:singlefull}
Let $d=\inf_\cR \odim_\Phi \mu _\cR $ and 
$d^{\prime }=\sup_\cR \udim_\Phi \mu _\cR$. 
For any $t\in \lbrack d,\infty )$ (or $t^{\prime }\in (0,d^{\prime }]$) there is some valid single probability $p$ such that $\odim_{\Phi }\mu _{p}=t$ a.s. (resp., 
$\udim_{\Phi }\mu_{p}=t^{\prime }$ a.s.).
\end{theorem}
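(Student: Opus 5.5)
The plan is a connectedness (intermediate value) argument on the space of valid single probabilities. Let $\mathcal{V}=\{p\in\R^K: p_i>0,\ \sum_i p_i=1\}$ be the set of valid single probabilities. It is the relative interior of the simplex, so it is convex and in particular path connected. By Proposition \ref{prop:single_continuous}, the maps $M(p)=\odim_\Phi\mu_p$ and $M'(p)=\udim_\Phi\mu_p$ (almost sure values) are continuous on $\mathcal{V}$.

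The image $M(\mathcal{V})$ is the continuous image of a connected set, so it is an interval in $\R$. We identify its endpoints as follows.
\begin{itemize}
\item By Proposition \ref{prop:min_single}, there is $p_0\in\mathcal{V}$ with $M(p_0)=\inf_p M(p)$.
\item By the Corollary preceding Proposition \ref{prop:approachzeroandinfinity}, $\inf_p M(p)=\inf_\cR \odim_\Phi\mu_\cR=d$. Hence $d=M(p_0)$ belongs to the image.
\item By Proposition \ref{prop:approachzeroandinfinity}, for any $N$ we can take $p\in\mathcal{V}$ with $\min_j p(j)$ small enough, for instance $p=(\varepsilon,\dots,\varepsilon,1-(K-1)\varepsilon)$, and get $M(p)>N$. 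So $M(\mathcal{V})$ is unbounded above.
\end{itemize}
An interval that contains its infimum $d$ and is unbounded above equals $[d,\infty)$. Concretely, given $t\ge d$, pick $p_1$ with $M(p_1)>t$ and apply the intermediate value theorem to $s\mapsto M((1-s)p_0+sp_1)$ on $[0,1]$. The segment stays in $\mathcal{V}$ by convexity, so some point $p$ on it has $M(p)=t$.

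The lower dimension is handled symmetrically. By Proposition \ref{prop:max_single}, there is $q\in\mathcal{V}$ with $M'(q)=\sup_p M'(p)$, and by the same Corollary this equals $d'$. By the lower-dimension half of Proposition \ref{prop:approachzeroandinfinity}, choosing $p$ with $\max_j p(j)>1-\varepsilon$ (still valid, for instance $p=(1-(K-1)\varepsilon',\varepsilon',\dots,\varepsilon')$ with $\varepsilon'$ small) gives $M'(p)<1/N$. Moreover $M'>0$ everywhere, by the corollary to Theorem \ref{SupThm}. Therefore $M'(\mathcal{V})$ is an interval contained in $(0,\infty)$ that contains $d'$ as its maximum and has infimum $0$, so it is exactly $(0,d']$. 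Again, a given $t'\in(0,d']$ is realized by the intermediate value theorem along the segment from $q$ to a $p$ with $M'(p)<t'$.

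I do not expect a serious obstacle, since the heavy lifting (continuity, and attainment of the extremes by single probabilities) is already done in Propositions \ref{prop:single_continuous}, \ref{prop:min_single} and \ref{prop:max_single}. The only point needing care is that every probability used, including the segment points and the extreme examples, is genuinely valid, i.e.\ has all coordinates positive. Convexity of $\mathcal{V}$ takes care of the segments, and Proposition \ref{prop:approachzeroandinfinity} can be applied to explicit valid vectors, so continuity is never invoked at the boundary of the simplex.
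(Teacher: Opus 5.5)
Your proposal is correct and follows the paper's own argument. Both combine three ingredients with the intermediate value theorem and continuity of $M$, $M'$:
\begin{itemize}
\item attainment of the extremes $d$ and $d'$ by single probabilities (Propositions \ref{prop:min_single} and \ref{prop:max_single});
\item the Corollary identifying these extremes with $\inf_\cR$ and $\sup_\cR$;
\item unboundedness from Proposition \ref{prop:approachzeroandinfinity}.
\end{itemize}
Your use of convexity of the valid probabilities, to run the intermediate value theorem along a segment, only makes explicit the connectedness step that the paper leaves implicit.
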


\begin{proof}
We have already seen that $d$ and $d^{\prime }$ can be attained in this fashion. 
Since Prop. \ref{prop:approachzeroandinfinity} shows that there are measures 
$\mu _{p}$ with arbitrarily large upper $\Phi $-dimension and others with 
arbitrarily small lower $\Phi $-dimension, and the functions $M(p)$ and $M^{\prime }(p)$ are continuous, the result follows immediately
from an application of the Intermediate value theorem.
\end{proof}

\begin{remark}
We remark that whether $0$ or $\infty $ can be attained depends on the
underlying probability space.
\end{remark}

\section{The minimal measure dimension when each IFS has two similarities}
\label{sec:algorithm}

In this section we will continue our study of the minimal attainable
dimension of the random Moran measures arising from independent
probabilities, specializing to the case of finitely many IFSs
each of which consists of two similarities. 
We will describe a simple algorithm for determining the minimum possible dimension which we have already seen is
equal to $\min_{p}M(p)$ (see Proposition \ref{prop:min_single}) where 
\begin{equation*}
    M(p)=\sup_{\theta }G_{p}(\theta )
\end{equation*}
and the minimum is taken over all valid single probability vectors $p$ (in this case meaning,
the probability weights $(p,1-p)$).
Thus we assume that $\cA = \{ 1, 2, \ldots, L \}$, $K = 2$, and $\cW$ is a single point.
We set $\pi_i = \cQ(\{ i \})$ to be the likelihood of choosing the $i$'th IFS. 

Using these assumptions, all of the possible values of $Z_p(\theta)$ and $Y_p(\theta)$, and thus $G_p(\theta)$, come from a finite
set of \emph{log-ratios} each of the form
\[
    \frac{ \sum_i \pi_i \log( q_i)}{\sum_i \pi_i \log( r_i)}
\]
where each $q_i$ is choice of either $p$ or $1-p$ and $r_i$ a choice of left or right scaling; there are $2^L$ such possible expressions.
Our analysis involves a careful consideration of which of these gives the dimension as we change the value of $p$.
In particular, it turns out that we only have to consider at most $L+1$ such ratios, which we will denote by $f_j$ for $j=0,1,\ldots, L$ (see below).
Figure \ref{fig:Jfunctions} gives some indication of how these functions typically look.

\subsection{Further notation and some elementary facts}

In this situation, we can assume that the left and right contraction factors of the $i$'th IFS are of the form  
$a^{\alpha _{i}}$, $a^{\beta _{i}}$ for $i=1,...,L$ and some $a > 0$ where, without loss of
generality, we assume $(\beta _{i}-\alpha _{i})$ is decreasing. 
 We will suppose that 
\begin{eqnarray*}
   \beta _{i}-\alpha _{i} &\geq &0\text{ for }i\in \{1,...,N\}\text{ and} \\
    \beta _{i}-\alpha _{i} &<&0\text{ for }i\in \{N+1,...,L\}
\end{eqnarray*}%
and assume that there is some index $i$ with $\beta _{i}-\alpha _{i}\geq 0$.
There is no loss of generality in making this assumption since if $\beta_i - \alpha _i<0$ for all $i$ we 
instead consider the random set generated by the IFSs with left contractions $a^{\beta _{i}}$ and right contractions 
$a^{\alpha _{i}}$. 
The random measure supported on this new random set and associated with the single probability 
$(1-p,p)$ mirrors the measure supported on the original random set arising from the probability $(p,1-p)$.
Hence the second random set has the same minimum measure dimension as the
original.

Our ordering assumption ensures that for all $\theta \geq 0$, 
\begin{equation*}
\frac{1}{1+a^{(\beta _{i+1}-\alpha _{i+1})\theta }}=\frac{a^{\alpha
_{i+1}\theta }}{a^{\alpha _{i+1}\theta }+a^{\beta _{i+1}\theta }}\leq \frac{%
a^{\alpha _{i}\theta }}{a^{\alpha _{i}\theta }+a^{\beta _{i}\theta }}=\frac{1%
}{1+a^{(\beta _{i}-\alpha _{i})\theta }}
\end{equation*}%
and that for $\theta \geq 0$ 
\begin{eqnarray*}
\frac{1}{1+a^{(\beta _{i}-\alpha _{i})\theta }} &\in &(0,1/2]\text{ for }%
i\geq N+1\\ &\text{ and}& \\
\frac{1}{1+a^{(\beta _{i}-\alpha _{i})\theta }} &\in &[1/2,1)\text{ for }%
i\leq N.
\end{eqnarray*}

For  $j=0,1,...,L$  let
\begin{equation*}
 S_j = \sum_{i=1}^j \pi_i \quad \mbox{ and } \quad T_{j}=\sum_{i=1}^{j}\alpha _{i}\pi_i+\sum_{i=j+1}^{L}\beta _{i}\pi_i,
\end{equation*}%
with suitable interpretations when $j = 0$ or $L$.
Given all of this, one can easily see that for each $\theta \geq 0$, 
\begin{equation*}
G_{p}(\theta )=\left\{ 
    \begin{array}{cc}
     f_{0}(p) & \text{if }p>\dfrac{1}{1+a^{(\beta _{1}-\alpha _{1})\theta }} \\[10pt]
    f_{j}(p) & \text{if }\dfrac{1}{1+a^{(\beta _{j+1}-\alpha _{j+1})\theta }}<p\leq \dfrac{1}{1+a^{(\beta _{j}-\alpha _{j})\theta }},\text{ }j=1,...,L-1
\\[10pt]
    f_{L}(p) & \text{if }p\leq \dfrac{1}{1+a^{(\beta _{L}-\alpha _{L})\theta }}
\end{array}
    \right. 
\end{equation*}
where 
\begin{equation*}
   f_{j}(p)=\frac{S_j\log p+(1-S_j) \log (1-p)}{ T_{j}\log a}\quad \text{ for } j=0,...,L. 
\end{equation*}
If we suppose, in addition, that all $\pi_i=1/L$ (that is, the
IFS are chosen with equal likelihood), then the functions $f_{j}$ simplify
to 
\begin{equation*}
    f_{j}(p)= \frac{j\log p+(L-j)\log (1-p)}{(\sum_{i=1}^{j}\alpha_{i}+\sum_{i=j+1}^{L}\beta _{i})\log a}.
\end{equation*}

\smallskip

Figure \ref{fig:Jfunctions} illustrates two different models, their collections of $f_j$s, 
along with a zoom on the range of $p$ values which give the minimal measure dimension.
Both models use an equally likely choice among the IFSs.
For the top two images, the model has $L = 10$ distinct sets of scaling factors with $a = 1/2$
and the $\alpha_i$ and $\beta_i$, in order, given as:

\smallskip

\begin{tabular}{l@{\hskip 0.2 cm}cccccccccc}
   $\alpha$:  & 1.1, & 1.3, & 1.5, & 1.8, & 1.7, & 1.9, & 1.6, & 2.9, & 5, & 7\\
   $\beta$:  & 10.1, & 7.0, & 4.6, & 4.2, & 2.8, & 2.2, & 1.7, & 1.6, & 2.8, & 3.
\end{tabular}
\smallskip

\noindent Note that $N = 7$ in this case.
From numerical computations, for this model we have $D \approx 0.3476$ while the 
minimal possible measure dimension $\min_p M(p) \approx 0.5360$ and so there is a ``gap'' as
proven in Cor. \ref{cor:gap_finite}.

For the bottom two images, the model has $L=3$ distinct sets of scaling factors with $a=1/3$
and the $\alpha_i$ and $\beta_i$, in order, given as:
\[
   \alpha: \ 1.1, 1.1, 1 \quad \mbox{ and } \quad \beta: \ 4.1, 3.1, 2.
\]
Notice that $N = 3$ in this second case.
In this model, numerical computations give $D \approx 0.3398$ while $\min_p M(p) \approx 0.4138$, so again 
there is a gap.

At any specific choice of $p$, the function $f_j(p)$ with the largest value will be the one which gives
the measure dimension for that choice of $p$.
Thus what we need to do is track  how this ``top'' $f_j$ changes with $p$ and find the 
$p$ which minimizes $\max_j f_j(p)$.
Clearly the places where the ``top'' $f_j$ changes are going to be important, as are the local minima for each $f_j$.

We encourage the reader to refer to Figure \ref{fig:Jfunctions} when reading the results and the algorithm in this section.

\begin{figure}[tbp]
\includegraphics[width=1.6in, height=1.6 in]{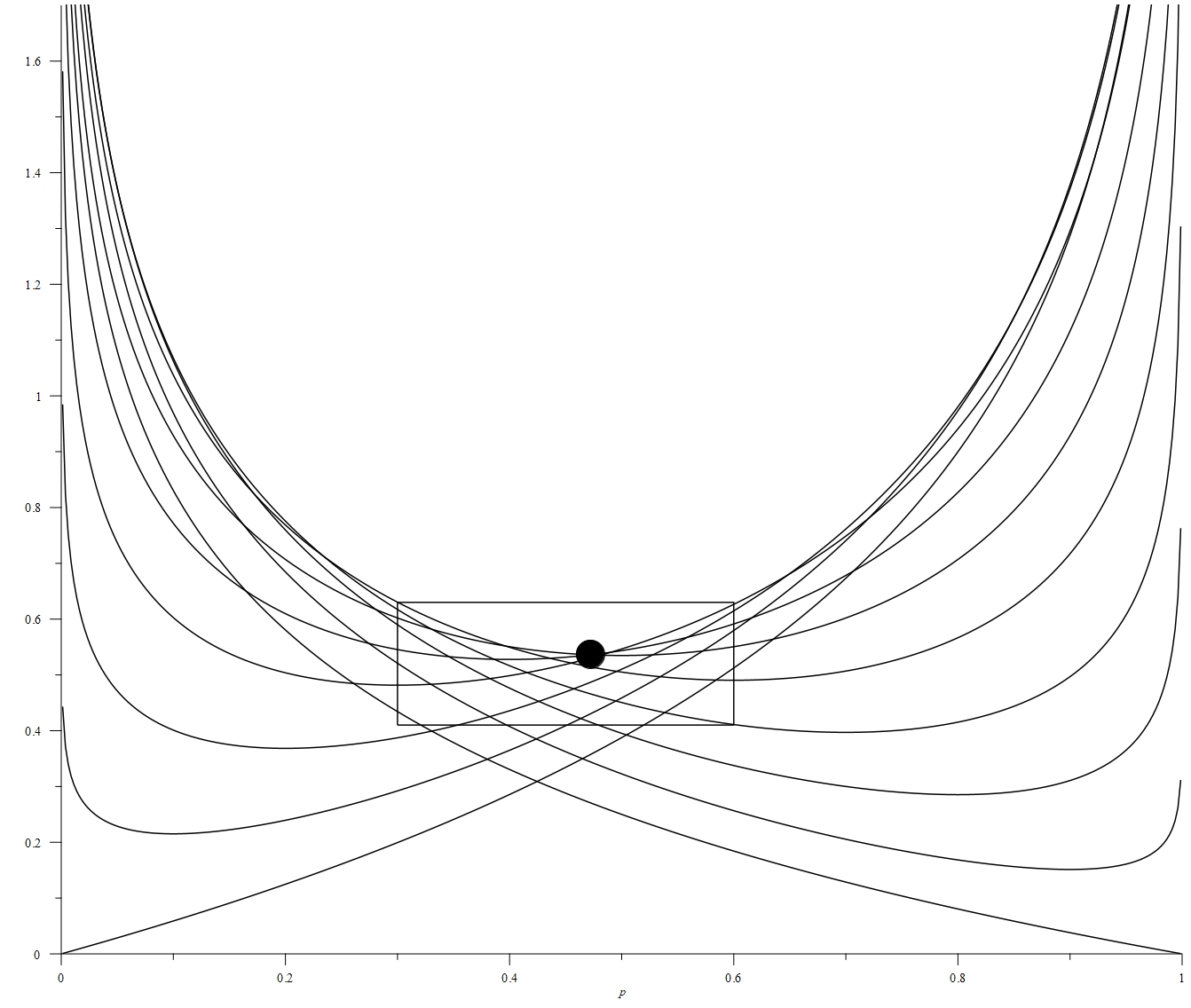}
\includegraphics[width=1.6in, height=1.6in]{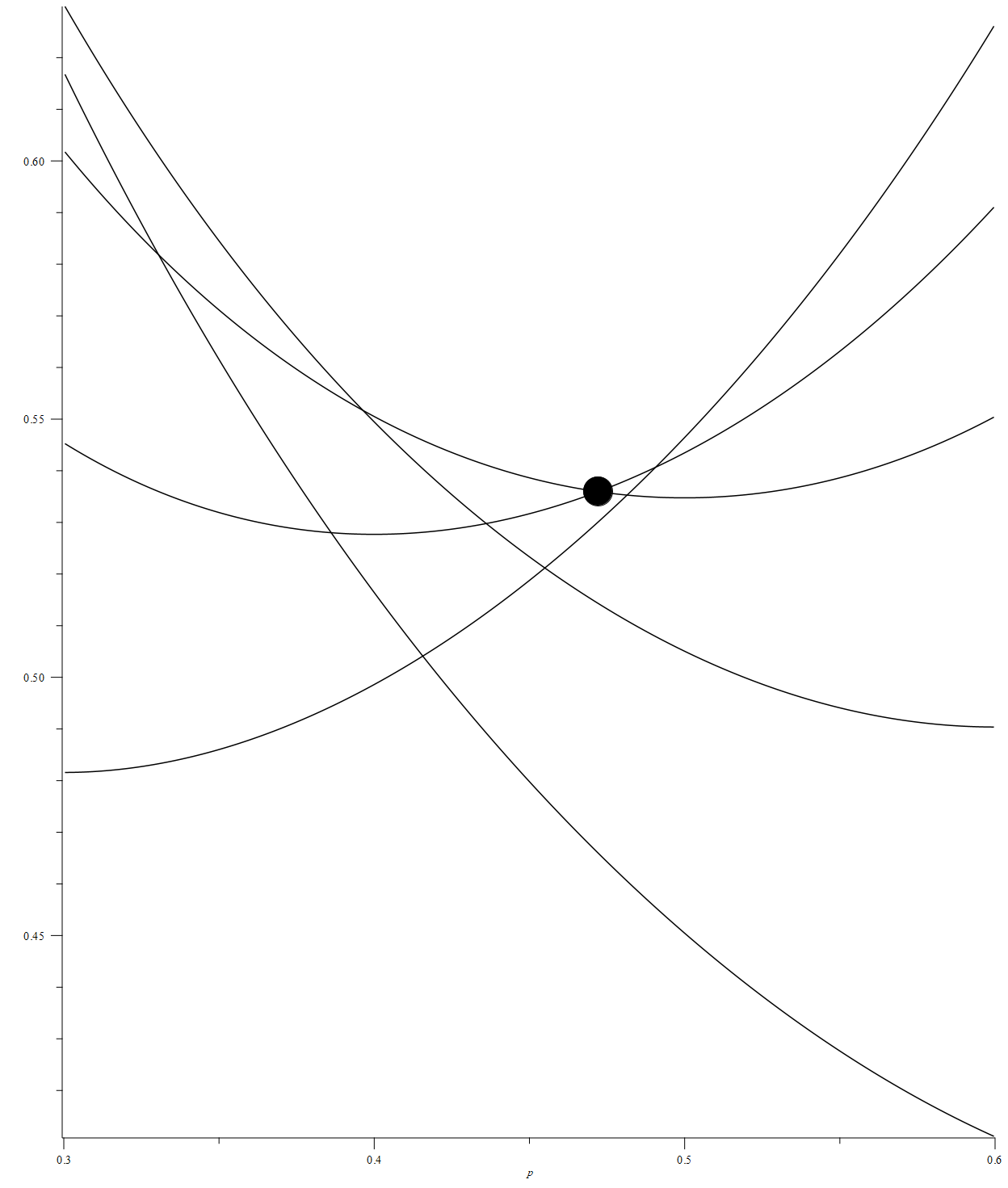}
\par
\includegraphics[width=1.6in, height = 1.2in]{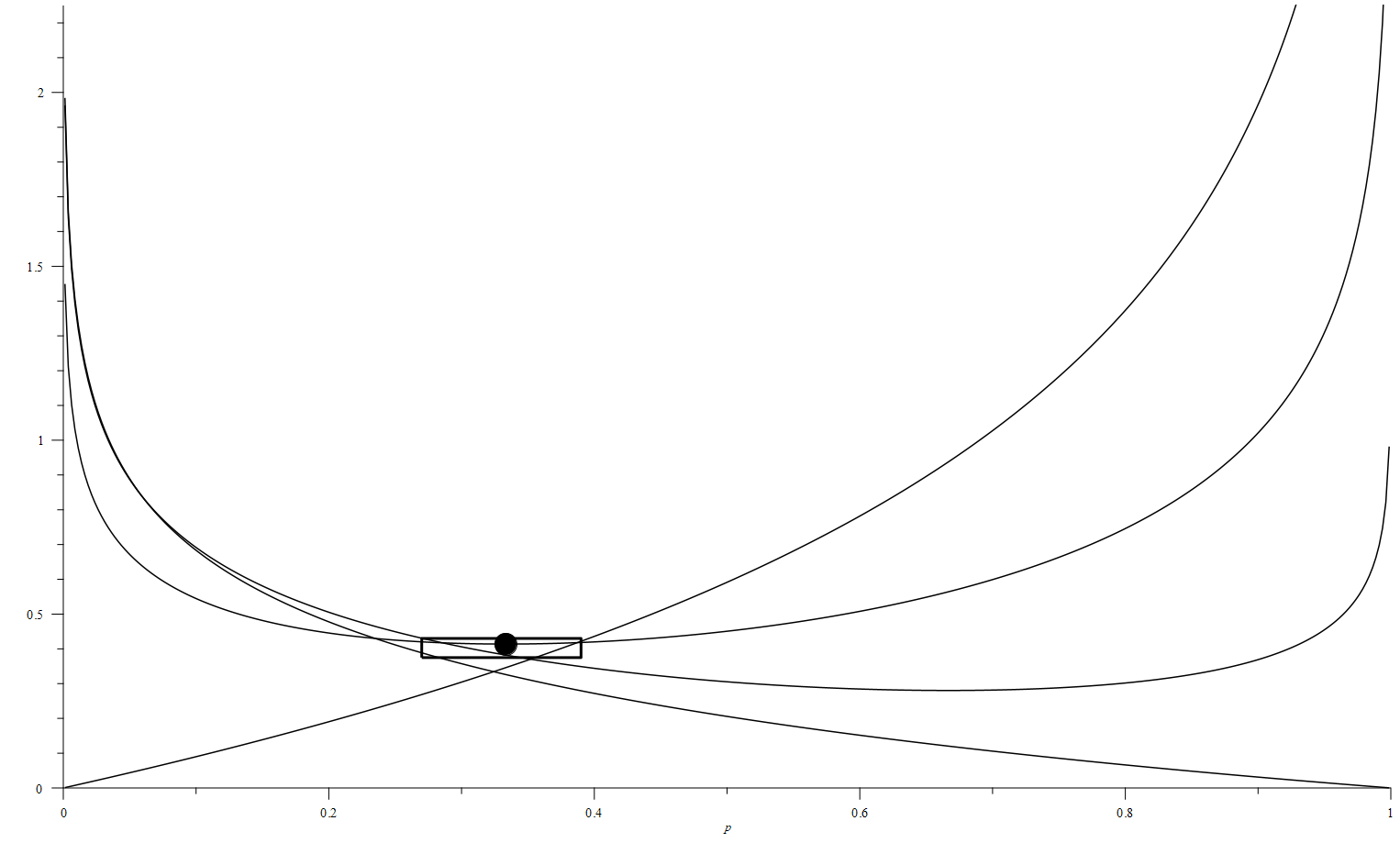}
\includegraphics[width=1.6in,height=1.6 in]{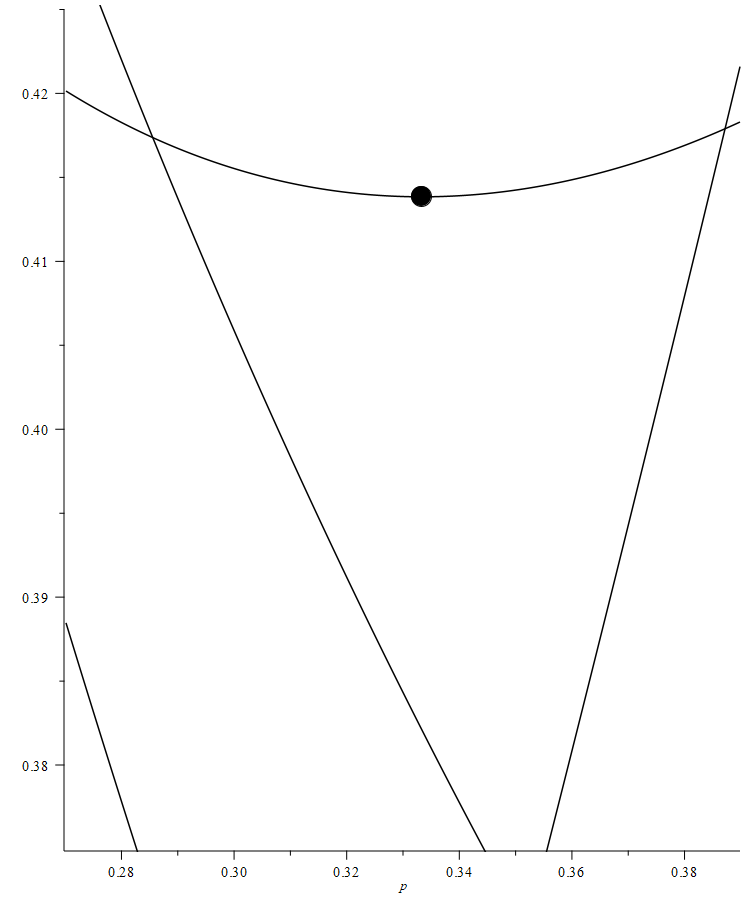}
\caption{Two examples of plots of the $f_j$s, each with a zoom around the point, marked with a dot, of minimal dimension.}
\label{fig:Jfunctions}
\end{figure}

We will further assume that the sequence $(\beta _{i}-\alpha _{i})_{i=1}^{L}$ is strictly decreasing. 
This assumption implies, in particular, that $\beta_{j}-\alpha _{j}=0$ can only hold for $j=N$. 
This is mainly a notational convenience as we will briefly explain at the end of the section.

We refer to the inequality 
\begin{equation}
   1/(1+a^{(\beta _{j+1}-\alpha _{j+1})\theta })<p\leq 1/(1+a^{(\beta
_{j}-\alpha _{j})\theta })  \tag{Condition $j$}
\end{equation}
as ``Condition $j$'' (where we understand this suitably in the case of $j=0$ or $L$). 
For any $p<1/2$, condition $j$ is satisfied for some $\theta \geq 0$ when 
$j\geq N$, while if $p>1/2$, then condition $j$ can be satisfied for any $j\leq N-1$, 
as well as for $j=N$ if $\alpha _{N}\neq \beta _{N}$.

Put 
\begin{equation*}
   \theta _{j}(p):=\frac{\log\left( (1-p)/p \right)}{(\beta _{j}-\alpha _{j})\log a}
\end{equation*}%
with $\theta _{N}=\infty $ if $\alpha _{N}=\beta _{N}$. 
With this notation, $p<1/2$ satisfies 
\begin{eqnarray*}
    \text{Condition $j$  if }\theta  &\in &(\theta _{j+1},\theta _{j}]\text{ for }N+1\leq j<L, \\
    \text{Condition $L$ if }\theta  &\leq &\theta _{L}\text{ and Condition $N$ if }\theta >\theta _{N+1},
\end{eqnarray*}%
and for no other $j$'s. When $p>1/2$, $p$ satisfies 
\begin{eqnarray*}
   \text{Condition $j$ if }\theta  &\in &[\theta _{j},\theta _{j+1})\text{ for } 1\leq j\leq N-1, \\
    \text{Condition $0$ if }\theta  &<&\theta _{1}\text{ and Condition $N$ if }\theta \geq \theta _{N},
\end{eqnarray*}
and for no other $j$'s. 

Condition $N$ holds for $p=1/2$ for all $\theta >0$ and is the only
condition which may be valid for both choices of $p<1/2$ and $p>1/2$. 

Since $T_{j+1}=T_{j}+(\alpha _{j+1}-\beta _{j+1})\pi_{j+1}$, we
clearly have 
\begin{equation*}
  T_{j+1}\leq T_{j}\text{ if }j\leq N-1\text{ and }T_{j+1}\geq T_{j}\text{ if } j\geq N,
\end{equation*}
where these inequalities are strict unless $j=N-1$ and $\beta _{N}-\alpha_{N}=0$

Here are other useful facts that are easy to prove.

\begin{lemma}
\label{calculus}
(a) The function $p\rightarrow f_{j}(p)$ strictly decreases for $p\in (0, S_j]$ and strictly increases for $p\in [ S_j, 1)$.

(b) If either $f_{j}$ is increasing and $j\geq N$ or $f_{j}$ is decreasing
and $j\leq N-1$, then $f_{j}-f_{j+1}$ is increasing.

(c) $f_{i}(p)=f_{j}(p)$ has at most two solutions for any $i\neq j$.
\end{lemma}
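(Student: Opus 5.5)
Each of the three parts comes from differentiating $f_j$ as an explicit function of $p$, writing $f_j(p) = g_j(p)/c_j$ with $g_j(p)=S_j\log p+(1-S_j)\log(1-p)$ and $c_j = T_j\log a$. Since $0<a<1$ and $T_j>0$ (it is a positive combination of the exponents), we have $c_j<0$.

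\textbf{Part (a).} We compute
\[
g_j'(p)=\frac{S_j}{p}-\frac{1-S_j}{1-p}=\frac{S_j-p}{p(1-p)}.
\]
This is positive for $p<S_j$ and negative for $p>S_j$. Dividing by the negative constant $c_j$ reverses the signs, so $f_j$ strictly decreases on $(0,S_j]$ and strictly increases on $[S_j,1)$. The degenerate cases $S_0=0$ and $S_L=1$ fit the same statement with an empty interval on one side.

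\textbf{Part (b).} For the difference we have
\[
(f_j-f_{j+1})'(p)=\frac{g_j'(p)}{c_j}-\frac{g_{j+1}'(p)}{c_{j+1}}.
\]
Two facts are used. First, $g_{j+1}'(p)=g_j'(p)+\pi_{j+1}/(p(1-p))>g_j'(p)$, because $S_{j+1}=S_j+\pi_{j+1}$. Second, by the facts recorded above, $T_{j+1}\geq T_j$ for $j\ge N$ and $T_{j+1}\le T_j$ for $j\le N-1$.

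Consider first $j\ge N$ with $f_j$ increasing, so $g_j'\le 0$. Then $|c_{j+1}|\ge|c_j|$. I would write
\[
(f_j-f_{j+1})' = \frac{-g_{j+1}'}{|c_{j+1}|}\cdot(-1)\cdots
\]
and handle it more cleanly by splitting the difference as
\[
\frac{g_j'}{c_j}-\frac{g_j'}{c_{j+1}} \;-\; \frac{\pi_{j+1}}{p(1-p)c_{j+1}}.
\]
The last term is positive, since $c_{j+1}<0$. The first bracket equals $g_j'\,(1/c_j-1/c_{j+1})$. We have $1/c_j-1/c_{j+1}=-1/|c_j|+1/|c_{j+1}|\le 0$, and $g_j'\le0$, so the bracket is $\ge 0$. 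Hence $(f_j-f_{j+1})'>0$.

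Now take $j\le N-1$ with $f_j$ decreasing, so $g_j'\ge0$. Here $|c_{j+1}|\le|c_j|$, so $1/c_j-1/c_{j+1}\ge0$, and the product with $g_j'\ge 0$ is again nonnegative. The remaining term is positive as before, so the difference is increasing in this case too.

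The main point needing care is matching the signs of $g_j'$ and $1/c_j-1/c_{j+1}$ in each case. Note that equality $T_{j+1}=T_j$ is harmless, because the strict term $\pi_{j+1}/(p(1-p)|c_{j+1}|)$ still gives a positive derivative.

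\textbf{Part (c).} The equation $f_i=f_j$ is equivalent to $h(p)=0$ on $(0,1)$, where
\[
h(p)=c_j g_i(p)-c_i g_j(p)=A\log p+B\log(1-p)
\]
for constants $A,B$. We have $h'(p)=A/p-B/(1-p)$. This is $\big(A(1-p)-Bp\big)/(p(1-p))$, whose numerator is linear in $p$, so $h'$ has at most one zero on $(0,1)$ unless $A=B=0$. If $A=B=0$, then $S_ic_j=S_jc_i$ and $(1-S_i)c_j=(1-S_j)c_i$; adding gives $c_i=c_j$ and then $S_i=S_j$, which is impossible for $i\ne j$ because the $\pi_k$ are positive. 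So $h$ is piecewise strictly monotone on at most two intervals. By Rolle's theorem it then has at most two zeros, which gives (c).
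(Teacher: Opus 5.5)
Your proof is correct and takes the same route as the paper. Part (a) comes from the sign of $f_j'(p)=(S_j-p)/(p(1-p)T_j\log a)$, part (b) uses the identical splitting of $f_j'-f_{j+1}'$ into a $(S_j-p)(1/T_j-1/T_{j+1})$ term and a strictly signed $-\pi_{j+1}/T_{j+1}$ term, and part (c) uses the fact that the derivative of $f_i-f_j$ has at most one zero. You also check that this derivative cannot vanish identically (that would force $S_i=S_j$), which the paper leaves implicit; the abandoned partial display at the start of your part (b) should simply be deleted.
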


\begin{proof}
(a) follows immediately from the fact that 
\begin{equation*}
  f_{j}^{\prime }(p)=\frac{S_j - p}{p(1-p)T_{j}\log a}.
\end{equation*}

(b) Likewise, 
\begin{equation*}
  f_{j}^{\prime }(p)-f_{j+1}^{\prime }(p)=\frac{1}{p(1-p)\log a}\left( (S_j - p)(\frac{1}{T_{j}}-\frac{1}{T_{j+1}})-\frac{\pi_{j+1}}{T_{j+1}}\right) .
\end{equation*}

Assume $f_{j}$ is increasing and $j\geq N$. Then $S_j - p \leq 0$ and $1/T_{j+1}\leq 1/T_{j}$, 
which implies $f_{j}^{\prime }(p)-f_{j+1}^{\prime}(p)>0$.

The other case is similar.

(c) This follows from the fact that $(f_{i}-f_{j})^{\prime }$ has at most
one root.
\end{proof}

\begin{lemma}
If $\alpha _{N}\neq \beta _{N}$, then $M(1/2)=f_{N}(1/2)>f_{j}(1/2)$ for all 
$j\neq N$. Otherwise, $M(1/2)=f_{N}(1/2)=f_{N+1}(1/2)>f_{j}(1/2)$ for all $%
j\neq N,N+1$.
\end{lemma}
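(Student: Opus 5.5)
The plan is to reduce everything to a comparison of the constants $T_j$. At $p=1/2$ the numerators of all the $f_j$ coincide, and the structure of the $T_j$ was already recorded before Lemma \ref{calculus}.

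First, evaluate each $f_j$ at $p=1/2$. Since $S_j\log\tfrac12+(1-S_j)\log\tfrac12=-\log 2$, we get
\[
   f_j(1/2)=\frac{-\log 2}{T_j\log a}=\frac{\log 2}{T_j\,|\log a|}\qquad (j=0,1,\ldots,L).
\]
All $T_j>0$, because $T_j$ is a $\pi$-weighted average of positive exponents $\alpha_i,\beta_i$, and $\log a<0$. Hence $f_j(1/2)$ is a strictly decreasing function of $T_j$. So the ordering of the values $f_j(1/2)$ is exactly the reverse of the ordering of the $T_j$.

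Second, locate the minimum of $T_j$ using $T_{j+1}=T_j+(\alpha_{j+1}-\beta_{j+1})\pi_{j+1}$. For $j\le N-1$ we have $\beta_{j+1}-\alpha_{j+1}\ge 0$, so $T_{j+1}\le T_j$. This is strict except possibly at the step $j=N-1$, which is flat exactly when $\alpha_N=\beta_N$; strictness elsewhere uses the assumed strict decrease of $(\beta_i-\alpha_i)$. For $j\ge N$ we have $\beta_{j+1}-\alpha_{j+1}<0$, so $T_{j+1}>T_j$. Consequently $T_N$ is the strict minimum of $(T_j)_{j=0}^L$ when $\alpha_N\neq\beta_N$. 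When $\alpha_N=\beta_N$, the minimum is attained exactly at $N$ and at the one adjacent index sharing its value, and it is strict against every other index. The index bookkeeping at this step is the only delicate point. I would check carefully which neighbour is tied under the paper's conventions for Condition $N$ and $\theta_N=\infty$, and match it to the statement's pairing $f_N(1/2)=f_{N+1}(1/2)$. Translating back through the first step gives the claimed strict inequalities $f_N(1/2)>f_j(1/2)$, with the tie in the degenerate case.

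Third, identify $M(1/2)=\sup_\theta G_{1/2}(\theta)$. By the piecewise description of $G_p$, every value $G_{1/2}(\theta)$ equals $f_j(1/2)$ for some $j$, so $M(1/2)\le\max_j f_j(1/2)=f_N(1/2)$. For the reverse inequality, recall the observation made before Lemma \ref{calculus}: Condition $N$ holds at $p=1/2$ for every $\theta>0$. Hence $G_{1/2}(\theta)=f_N(1/2)$ for all $\theta>0$, and so $M(1/2)=f_N(1/2)$. The value at $\theta=0$ is irrelevant, since it is some $f_j(1/2)\le f_N(1/2)$. In the degenerate case $\alpha_N=\beta_N$ this also equals the tied neighbour's value, which completes both assertions.
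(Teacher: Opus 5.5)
Your route is the paper's: compute $f_j(1/2)=\log 2/(T_j|\log a|)$, order the $T_j$ via $T_{j+1}=T_j+(\alpha_{j+1}-\beta_{j+1})\pi_{j+1}$, and use that Condition $N$ holds at $p=1/2$ for every $\theta>0$. For $\alpha_N\neq\beta_N$ this is complete. You are also more careful than the paper in checking $T_j>0$ and in disposing of $\theta=0$, where $G_{1/2}(0)=f_L(1/2)\le f_N(1/2)$.

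The gap is the degenerate case. There you postpone the index check and then say the argument ``completes both assertions.'' The check cannot come out in the statement's favour, and your own Step 2 already shows why. The only flat step is $T_N-T_{N-1}=(\alpha_N-\beta_N)\pi_N=0$. By contrast $T_{N+1}-T_N=(\alpha_{N+1}-\beta_{N+1})\pi_{N+1}>0$ always, because $\beta_{N+1}<\alpha_{N+1}$ by the definition of $N$. The value $f_j(1/2)$ depends only on $T_j$. Conventions about Condition $N$ or $\theta_N=\infty$ only decide which $f_j$ appears in $G_p(\theta)$, so they cannot change this.

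Hence, with the paper's definition of $N$, the second sentence of the lemma is false as written. For example, take $L=2$, $\pi_1=\pi_2=1/2$, $a=1/3$, $(\alpha_1,\beta_1)=(1,1)$ and $(\alpha_2,\beta_2)=(2,1)$. Then $N=1$ and $T_0=T_1=1<T_2=3/2$, so $f_0(1/2)=f_1(1/2)>f_2(1/2)$.

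What your argument actually proves, word for word, is $M(1/2)=f_{N-1}(1/2)=f_N(1/2)>f_j(1/2)$ for all $j\neq N-1,N$. You should state that corrected conclusion rather than promise to reconcile it with $f_N(1/2)=f_{N+1}(1/2)$. The paper's proof makes the same index slip. It claims $f_j(1/2)<f_{j+1}(1/2)$ for every $j<N$ and allows a tie only between $f_N$ and $f_{N+1}$. This contradicts its own earlier remark that the $T$-inequalities are strict except at $j=N-1$. The pairing $f_N=f_{N+1}$ would be correct if $N$ were defined as the last index with $\beta_i-\alpha_i>0$, so that the zero difference sits at index $N+1$, and the hypothesis read $\alpha_{N+1}=\beta_{N+1}$.
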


\begin{proof}
For all $j$, 
\begin{equation*}
f_{j}(1/2)=\frac{\log 1/2}{T_{j}\log a}.
\end{equation*}%
The ordering of the $T_{j}$'s implies that $f_{j}(1/2)<f_{j+1}(1/2)$ if $j<N$%
, $f_{j}(1/2)>f_{j+1}(1/2)$ if $j>N$ and $f_{N}(1/2)\geq f_{N+1}(1/2)$ with
equality only when $\alpha _{N}=\beta _{N}$. Further, $G_{1/2}(\theta
)=f_{N}(1/2)$ for $\theta >0$.
\end{proof}

\subsection{Transitioning}

We will say that ``$M$ left transitions from $f_{i}$ to $f_{j}$ at $b$, for $j\neq i$'', 
if there is some $\varepsilon >0$ such that

(i) for each $p\in (b,b+\varepsilon )$, $p$ satisfies condition $i$ and $f_{i}(p)>f_{l}(p)$ for all $l\neq i;$ and

(ii) for each $p\in (b-\varepsilon ,b)$, $p$ satisfies condition $j$  and $f_{j}(p)>f_{l}(p)$ for all $l\neq j$.

\smallskip

We define right transitioning analogously.
The idea behind this definition is that the ``top'' $f_i$ switches at $b$, either when moving to the right or left.

We remark that if $M$ transitions from $f_{i}$ to $f_{j}$ at $b<1/2$ we must
have $i,j\geq N$ since conditions $i,j$ can only be satisfied for $p$ near $b
$ for such indices. Similarly, if the transition occurs at $b>1/2$, $i,j$ $%
\leq N$.

Here is the key technical lemma.

\begin{lemma}
\label{transition}(a) If $M$ left (right) transitions from $f_{i}$ to $f_{j}$
at $b<1/2$ ( $b>1/2$), then $j=i\pm 1$.

(b) If $M$ left (right) transitions from $f_{N}$ to $f_{j}$ at $b<1/2$ ( $%
b>1/2$), then $j=N+1$ ( $j=N-1$ resp.).

(c) Suppose $b<1/2$, $M$ left transitions from $f_{i}$ to $f_{j}$ at $b$ and 
$f_{i}$ is increasing at $b$. Then $j=i+1$. Similarly, if $b>1/2$, $M$ right
transitions from $f_{i}$ to $f_{j}$ at $b$ and $f_{i}$ is decreasing at $b$,
then $j=i-1$.
\end{lemma}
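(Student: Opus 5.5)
The plan is to work entirely with the explicit description of $G_p(\theta)$ via Conditions $j$. Treat the case $b<1/2$ with left transitions; the case $b>1/2$ with right transitions is symmetric under $p\mapsto 1-p$, with the indices reflected about $N$. For a fixed $p<1/2$, Condition $j$ holds for $\theta\in(\theta_{j+1}(p),\theta_j(p)]$ when $N+1\le j<L$. Condition $L$ holds for $\theta\le\theta_L(p)$, and Condition $N$ for $\theta>\theta_{N+1}(p)$. Hence the set $\{j: p \text{ satisfies Condition } j \text{ for some }\theta\}$ is exactly $\{N,N+1,\dots,L\}$, and $M(p)=\max_{N\le j\le L} f_j(p)$. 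Statements (a)–(c) then reduce to comparing which $f_l$ is the strict maximum among $f_N,\dots,f_L$ just to the right and just to the left of $b$.

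For (a) and (b), suppose $M$ left transitions from $f_i$ to $f_j$ at $b$. By continuity of the $f_l$, we have $f_i(b)=f_j(b)\ge f_l(b)$ for all $l$. The aim is to show that an intermediate index $l$ strictly between $i$ and $j$ would exceed both of them near $b$, contradicting the strict-maximum requirement in the definition. The key ingredient is a convexity-type property of the family: $f_l(p)=\frac{S_l\log p+(1-S_l)\log(1-p)}{T_l\log a}$. Writing $\log a<0$, the numerator is affine in $S_l$ and the denominator affine in $T_l$. For $l\ge N$, $S_l$ is increasing and $T_l$ is increasing, with increments $\pi_{l+1}$ and $(\beta_{l+1}-\alpha_{l+1})\pi_{l+1}$ respectively. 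The ratios of these increments are the strictly decreasing (in absolute value) quantities $|\beta_{l+1}-\alpha_{l+1}|$. A linear-fractional function evaluated along such a "discretely concave" path cannot have two non-adjacent indices tie for the maximum while the intermediate ones lie strictly below.

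Concretely, I would compare $f_i(b)=f_j(b)=c$. This gives
\[
S_i\log b+(1-S_i)\log(1-b)=cT_i\log a
\]
and the same identity with $i$ replaced by $j$. I would then show that for $i<l<j$ the point $(S_l,T_l)$ lies on the correct side of the line through $(S_i,T_i)$ and $(S_j,T_j)$, which forces $f_l(b)\ge c$, with strict inequality unless degenerate. Lemma \ref{calculus}(c), giving at most two crossings, should rule out the degenerate equality case persisting on an interval. For (b), with $i=N$, the same argument gives $j=N+1$. One must also handle the fact that $\theta_N=\infty$ when $\alpha_N=\beta_N$, but the index set is unchanged.

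For (c), with $b<1/2$ and $f_i$ increasing at $b$ (so $b>S_i$), I would use Lemma \ref{calculus}(b): $f_i-f_{i+1}$ is increasing there. Since $f_i>f_{i+1}$ just right of $b$, $f_{i+1}$ would need to overtake from the left; the alternative $j=i-1$ must be excluded. By (b) applied to the pair $(i-1,i)$ under the matching hypotheses, $f_{i-1}-f_i$ is increasing whenever $f_{i-1}$ is increasing. I would show $f_{i-1}$ is increasing at $b$ because $S_{i-1}<S_i<b$. Then $f_{i-1}-f_i$ is increasing, so $f_{i-1}<f_i$ to the right of $b$ implies $f_{i-1}<f_i$ to the left as well. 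This contradicts a left transition to $f_{i-1}$, and by (a) the only remaining option is $j=i+1$.

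I expect the main obstacle to be the "intermediate index dominates" step in (a). Translating the ordering of $\beta_l-\alpha_l$ into the needed sign of a $3\times3$-type determinant in $(S_l,T_l,1)$, uniformly in $b<1/2$, requires care with signs, since $\log a<0$ and $\log b<\log(1-b)$.
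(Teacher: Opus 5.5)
Your proposal is correct. For parts (a) and (b) it takes a genuinely different route from the paper; part (c) is essentially the paper's argument (Lemma~\ref{calculus}(b) applied to $f_{i-1}-f_i$, using $b\ge S_i>S_{i-1}$).

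The paper never compares the $f_k$ with one another directly. It uses the fixed-point property $G_p(M(p))=M(p)$ from Theorem~\ref{thm:G(t)=t}. From this it deduces that when $f_i$ is the strict top on $(b,b+\varepsilon)$, the value $\theta=f_i(p)$ itself satisfies Condition $i$, i.e.\ $f_i(p)\in(\theta_{i+1}(p),\theta_i(p)]$. Letting $p\to b$ and using $f_i(b)=f_j(b)$, the closed windows $[\theta_{i+1}(b),\theta_i(b)]$ and $[\theta_{j+1}(b),\theta_j(b)]$ must share a point, which forces $j=i\pm1$.

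You instead use only $M(p)=\max_{N\le k\le L}f_k(p)$ for $p<1/2$ and a convexity property of the points $(S_k,T_k)$. The paper's argument is shorter given the dimension-theoretic machinery already in place. Yours is elementary and self-contained, and it gives a stronger pointwise fact: for every $p$, $k\mapsto f_k(p)$ is quasi-concave on the candidate indices. So two non-adjacent indices can never jointly attain the maximum, whether or not a transition occurs at $p$.

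The step you flagged does close, but you stated the monotonicity backwards, and that direction is exactly what decides it.

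\begin{itemize}
\item For $l\ge N$ the increment of $T$ is $(\alpha_{l+1}-\beta_{l+1})\pi_{l+1}>0$, not $(\beta_{l+1}-\alpha_{l+1})\pi_{l+1}$.
\item The slopes $\Delta T/\Delta S=\alpha_{l+1}-\beta_{l+1}=|\beta_{l+1}-\alpha_{l+1}|$ are strictly \emph{increasing}. So the chain $(S_l,T_l)_{l\ge N}$ is strictly convex.
\item Put $x=-\log(1-b)>0$ and $y=\log\frac{1-b}{b}>0$. Then $f_l(b)=(x+S_ly)/(T_l|\log a|)$, so $f_l(b)\ge c$ iff $g(S_l,T_l)\ge 0$, where $g(S,T)=x+Sy-c|\log a|T$.
\item This affine $g$ vanishes on the chord through $(S_i,T_i)$ and $(S_j,T_j)$. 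Since $c>0$, it is strictly decreasing in $T$.
\item The intermediate vertices lie strictly below that chord, so $f_l(b)>c=M(b)$ for $i<l<j$, a contradiction.
\end{itemize}

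Had the slopes been decreasing, as you wrote, the intermediate values would fall below $c$ and nothing would follow.

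Strictness is automatic under the standing assumption that $\beta_l-\alpha_l$ is strictly decreasing (with all $\pi_l>0$). So you do not need Lemma~\ref{calculus}(c) to handle a degenerate case. The case $b>1/2$ goes through verbatim on the candidate indices $\le N$, where the slopes $\alpha_{l+1}-\beta_{l+1}\le 0$ are still increasing.
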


\begin{proof}
(a) We will prove the left transitioning case as the right case is symmetric. 
For $p\in (b,b+\varepsilon )$ we have $M(p)=f_{i}(p)$ and $i$ is the unique index having this property. 
Thus by Theorem \ref{thm:G(t)=t} we know that $f_{i}(p)=G_{p}(f_{i}(p))$. 
Similarly, for $p\in (b-\varepsilon ,b)$ we have $M(p)=f_{j}(p)$ and $j$ is the unique index having this property, 
so $f_{j}(p)=G_{p}(f_{j}(p))$. 
By continuity, $f_{i}(p)\rightarrow f_{i}(b)$ as $p\downarrow b$, while $f_{j}(p)\rightarrow f_{j}(b)$ as $p\uparrow b$. 
Since $M$ is continuous by Proposition \ref{prop:finite_GContinuous}, it follows
that $f_{i}(b)=f_{j}(b)$. 

First, suppose $i,j\neq N$. Since $G_{p}(f_{i}(p))=f_{i}(p)\neq f_{\ell }(p)$
for $\ell \neq i$ when $p\in (b,b+\varepsilon )$, it follows that $%
f_{i}(p)\in (\theta _{i+1}(p),\theta _{i}(p)]$ for such $p$. Similarly, $%
f_{j}(p)\in (\theta _{j+1}(p),\theta _{j}(p)]$ for $p\in (b-\varepsilon ,b)$.
Because the functions $p\rightarrow \theta _{l}(p)$ are also continuous, it
follows that $f_{i}(b)\in \lbrack \theta _{i+1}(b),\theta _{i}(b)]$ and $%
f_{j}(b)\in \lbrack \theta _{j+1}(b),\theta _{j}(b)]$. Hence either $i=j+1$
or $j=i+1$ as claimed.

The argument is similar if $i$ or $j$ $=N$.

(b) This follows immediately from the fact that in the left transition case
we assume $b<1/2$, so $j\geq N$ and hence $j\neq N-1$. Similarly $j\neq N+1$
in the right transition case.

(c) Consider the left transition case. As $b<1/2$,  $i$ $\geq N$, and since
we have already done the case $i=N$ in (b) we can assume that $i\geq N+1$.
As $f_{i}$ is increasing on $[b,1)$, so is $f_{i-1}$ and thus Lemma \ref%
{calculus}(b) shows that $f_{i-1}-f_{i}$ is increasing on $[b,1)$. If $f_{i}$
left transitioned to $f_{i-1}$, then $f_{i}(b)=f_{i-1}(b)$ and thus the
increasing property would yield $f_{i-1}(b+\varepsilon )>f_{i}(b+\varepsilon
)$ contradicting the definition of left transitioning.

The right transition case is similar.
\end{proof}

\subsection{The Algorithm}

Case 1. Suppose $\alpha _{N}\neq \beta _{N}$. We have already shown that $%
M(1/2)$ $=f_{N}(1/2)>f_{j}(1/2)$ for all $j\neq N$, hence $\min_{p}M(p)\leq
f_{N}(1/2)$.

Sub-case 1a: $f_{N}$ is increasing at $1/2$. Then for $p>1/2$, $M(p)\geq
f_{N}(p)>f_{N}(1/2)$, so to minimize $M$ we must consider $p\leq 1/2$. By
continuity $f_{N}(p)$ will continue to be the unique maximum as we decrease $%
p$ until the largest $b<1/2$ where $f_{N}$ left transitions to $f_{j}$,
i.e., $M(p)=f_{N}(p)$ for all $p\in \lbrack b,1/2]$.

If $b\leq S_{N}$, then $M(S_{N})=f_{N}(S_{N})\leq f_{N}(p)\leq M(p)$ for all 
$p\geq S_{N}$. Further, $f_{N}$ is decreasing on $(0,b]$, which ensures that 
$M(p)\geq f_{N}(p)\geq f_{N}(b)\geq f_{N}(S_{N})$ for all $p\leq b$.
Therefore $\min_{p}M(p)=f_{N}(S_{N})$.

If, instead, $b>S_{N}$, then $f_{N}$ is increasing at $b$ and therefore $M(b)=f_{N}(b)\leq f_{N}(p)\leq M(p)$ for all $p>b$. 
Lemma \ref{transition}(c) tells us that $j=N+1$. If $f_{N+1}$ is decreasing at $b$, then 
$M(p)\geq f_{N+1}(p) \geq f_{N+1}(b)=f_{N}(b)=M(b)$ for all $p\leq b$ and thus $\min_{p}M(p)=f_{N}(b)$.

Otherwise, $f_{N+1}$ is increasing at $b$. Then $M(p)\geq
f_{N+1}(p)>f_{N+1}(b)=M(b)$ for all $p>b$, so to minimize $M$ we must
consider $p\leq b$ and we basically repeat the argument. We have that $%
f_{N+1}(p)$ is the unique maximum for $p\in (b_{1},b)$ where $b_{1}<$ $b$ is
maximal with the property that $f_{N+1}$ left transitions to some $f_{j}$ at 
$b_{1}$. If $b_{1}\leq S_{N+1}$, then $f_{N+1}$ is decreasing at $b_{1}$ and
hence $\min_{p}M(p)=f_{N+1}(S_{N+1})$. If $f_{N+1}$ is increasing at $b_{1}$
then Lemma \ref{transition} tells us that $j=N+2$. If $f_{N+2}$ is
decreasing at $b_{1}$, then similar reasoning to that given above shows that 
$\min_{p}M(p)=f_{N+1}(b_{1})$.

Otherwise, $f_{N+2}$ is increasing at $b_{1}$ and we repeat the argument.

This argument must terminate in a finite number of steps producing a choice
of $p$ which minimizes $M$. Furthermore, it is clear from the argument that
this is the unique choice of a (single) probability minimizing $M$.

Sub-case 1b: $f_{N}$ is decreasing at $1/2$. The argument is symmetric. We
have $M(p)\geq f_{N}(p)>f_{N}(1/2)$ for all $p<1/2$, so to minimize $M$ we
must consider $p\geq 1/2$. We find the smallest $b>1/2$ where $f_{N}$ right
transitions to $f_{j}$ at $b$. If $b\geq S_{N}$, then similar reasoning to
above shows that $M(S_{N})=f_{N}(S_{N})=\min_{p}M(p)$.

If $b<S_{N}$, then $f_{N}$ is decreasing at $b$ and $j=N-1$. If $f_{N-1}$ is
increasing at $b$, then $\min_{p}M(p)=f_{N}(b)$, while if $f_{N-1}$ is
decreasing at $b$ we repeat the argument. Again, it must terminate in a
finite number of steps with the choice of $p$ which minimizes $M$.

This completes the algorithm when $\alpha _{N}\neq \beta _{N}$.

Case 2. Now suppose $\alpha _{N}=\beta _{N}$, so that $M(1/2)=f_{N}(1/2)=f_{N+1}(1/2)>f_{j}(1/2)$ for all $j\neq N,N+1$.

If $f_{N}$ is decreasing at $p=1/2$, then $M(p)>f_{N}(1/2)$ for $p<1/2$, so
to minimize $M$ we must consider $p>1/2$. Since condition $N+1$ does not
hold for $p>1/2$, $f_{N}$ will be the unique maximum on some interval $(1/2,1/2+\varepsilon )$ 
and thus we simply repeat the argument of case 1b.

If $f_{N}$ is increasing at $p=1/2$, then we must consider $p<1/2$ since $%
M(p)>f_{N}(1/2)$ for $p>1/2$. If $f_{N+1}$ is decreasing at $1/2$, then $%
M(p)\geq f_{N+1}(p)>f_{N+1}(1/2)$ for $p<1/2$ so $M(1/2)$ is minimal.

Finally, suppose $f_{N+1}$ is increasing at $1/2$. 
For $p<1/2$, $\log p < \log (1-p)$ and as $T_{N}=T_{N+1}$, we have $f_{N+1}(p)>f_{N}(p)$. 
Thus  $f_{N+1}$ is the unique maximum on some interval $(1/2-\varepsilon ,1/2) $
and we basically repeat the case 1a argument, beginning by finding the
largest $b_{1}<1/2$ where $f_{N+1}$ transitions to $f_{j}$.

\begin{remark}
The two examples shown in Figure \ref{fig:Jfunctions} illustrate two different ways that $M(p)$ can occur,
with $p$ being at an intersection (or transition) point in the top example and $p$ being at a local minimum
of some $f_j$ in the bottom example.

\end{remark}

\subsection{The special case of two IFS}

Suppose there are two IFSs, each of which has two children, $(a^{\alpha
_{i}},a^{\beta _{i}})$ for $i=1,2$, with $\beta _{1}-\alpha _{1}>\beta
_{2}-\alpha _{2}$, and chosen with equal likelihood. Let $\theta _{0}$ be
the unique solution to 
\begin{equation*}
\theta =\frac{2\log (1+a^{(\beta _{2}-\alpha _{2})\theta })}{(\alpha
_{1}+\alpha _{2})\left\vert \log a\right\vert }.
\end{equation*}

\begin{proposition}
(i) If $\beta _{1}-\alpha _{1}\geq 0>\beta _{2}-\alpha _{2}$, then the a.s. minimum 
measure dimension is
\begin{equation*}
    \min_{p}M(p)=f_{1}(1/2)=\frac{2\log 2}{(\alpha _{1}+\beta _{2})|\log a|)}.
\end{equation*}

(ii) If $\beta _{2}-\alpha _{2}\geq 0$, then $\min_{p}M(p)=\theta _{0}$ and
occurs with the choice 
\[
   p=p_{0}=1/(1+a^{(\beta _{2}-\alpha_{2})\theta _{0}}) > 1/2.
\]

(iii) If $\beta _{2}=\alpha _{2}$, then the almost sure minimal measure
dimension is 
\[  
   \min_p M(p) = \frac{2\log 2}{(\alpha _{1}+\alpha _{2})|\log a|}.
\]

\end{proposition}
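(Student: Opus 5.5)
The key tool is Lemma \ref{lem:H-function}. For each $j\in\{0,1,2\}$, $f_j(p)$ equals $H(\chi)$ for a deterministic index rule $\chi$: choose the left child in the first $j$ IFSs and the right child in the rest. Hence
\[
M(p)\ \ge\ f_j(p)\qquad\text{for all } j \text{ and all valid } p .
\]
With $L=2$ and $\pi_1=\pi_2=1/2$ we have $S_0=0$, $S_1=1/2$, $S_2=1$ and
\[
T_0=\tfrac{\beta_1+\beta_2}{2},\qquad T_1=\tfrac{\alpha_1+\beta_2}{2},\qquad T_2=\tfrac{\alpha_1+\alpha_2}{2}.
\]
By Lemma \ref{calculus}(a), $f_0$ is increasing, $f_2$ is decreasing, and $f_1$ has its strict minimum at $p=1/2$. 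The plan is to obtain a lower bound for $M$ from this pointwise inequality, and an attaining upper bound from either the lemma giving $M(1/2)=f_N(1/2)$ or Theorem \ref{thm:G(t)=t}(ii).

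For (i) we have $N=1$. The lemma on $M(1/2)$ gives $M(1/2)=f_1(1/2)$; this holds also when $\alpha_1=\beta_1$, since then $M(1/2)=f_N(1/2)=f_{N+1}(1/2)$. Then $M(p)\ge f_1(p)\ge f_1(1/2)=M(1/2)$ for every $p$. Finally $f_1(1/2)=\log(1/2)/(T_1\log a)=2\log 2/((\alpha_1+\beta_2)|\log a|)$.

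For (iii) we have $N=2$ and $T_1=T_2$, so $f_1(1/2)=f_2(1/2)=M(1/2)$. For $p<1/2$, $M(p)\ge f_2(p)>f_2(1/2)$. For $p>1/2$, $M(p)\ge f_1(p)>f_1(1/2)$. Hence the minimum is $M(1/2)=2\log 2/((\alpha_1+\alpha_2)|\log a|)$.

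For (ii), assume $\beta_2-\alpha_2>0$; the boundary case is (iii). The equation defining $\theta_0$ has a unique solution, because its left side increases in $\theta$ while its right side is positive and non-increasing. Set $p_0=1/(1+a^{(\beta_2-\alpha_2)\theta_0})$, so $p_0>1/2$ and $\theta_0=\theta_2(p_0)$. Unwinding the definition gives $\theta_0=\log p_0/(T_2\log a)=f_2(p_0)$. At $\theta=\theta_0$, Condition $2$ holds for $p_0$ (it is the boundary case $\theta\ge\theta_2$), so $G_{p_0}(\theta_0)=f_2(p_0)=\theta_0$. Theorem \ref{thm:G(t)=t}(ii) then gives $M(p_0)=\theta_0$.

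For the lower bound, I first show $f_1(p_0)=\theta_0$. At $(\theta_0,p_0)$ the second IFS has a tie: its left and right children have equal ratio $a^{\theta_0}/p$. So the rule $\chi$ defining $f_1$ also attains $\max_j a(j,\lambda)^{\theta_0}/p(j)$ almost surely. Hence $\ee(\log p(\chi))-\theta_0\ee(\log a(\chi))$ equals the same expression with $m$ in place of $\chi$, and that vanishes because $G_{p_0}(\theta_0)=\theta_0$; this is the computation in the proof of Theorem \ref{thm:G(t)=t}(ii). Therefore $H(\chi)=f_1(p_0)=\theta_0$. Now, for $p\le p_0$, $M(p)\ge f_2(p)\ge f_2(p_0)=\theta_0$ because $f_2$ is decreasing. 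For $p\ge p_0>1/2$, $M(p)\ge f_1(p)\ge f_1(p_0)=\theta_0$ because $f_1$ is increasing on $[1/2,1)$. So $\min_pM(p)=\theta_0$, attained at $p_0$.

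The main obstacle is this tie argument giving $f_1(p_0)=\theta_0$, equivalently that the $f_1$/$f_2$ crossing is exactly $p_0$. If it proves delicate, I would fall back to a direct algebraic check that $f_1(p_0)=f_2(p_0)$ from the defining equation of $\theta_0$.
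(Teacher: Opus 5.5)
Your proof is correct. For (i) it matches the paper: $M(1/2)=f_1(1/2)$, and $f_1$ has its minimum at $S_1=1/2$. You also supply, via Lemma~\ref{lem:H-function}, the bound $M\ge f_1$ that the paper leaves implicit.

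For (ii) your route is genuinely different. The paper runs Case~1b of its algorithm:
\begin{itemize}
\item it locates the minimizer as the first point $p'>1/2$ where $M$ right transitions from $f_2$ to $f_1$ (Lemma~\ref{transition}, which relies on continuity of $M$);
\item it notes that $f_1=f_2$ has at most one root in $(1/2,1)$, since $(f_1-f_2)'>0$ there;
\item it checks by a ``routine calculation'' that $p_0$ is that root.
\end{itemize}
You instead sandwich the minimum directly. The upper bound $M(p_0)=\theta_0$ comes from the fixed-point characterization in Theorem~\ref{thm:G(t)=t}(ii), applied to $G_{p_0}(\theta_0)=f_2(p_0)=\theta_0$. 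The lower bound uses $M\ge f_2$ for $p\le p_0$, where $f_2$ decreases, and $M\ge f_1$ for $p\ge p_0$, where $f_1$ increases. You get $f_1(p_0)=\theta_0$ from the tie in the second IFS at $(\theta_0,p_0)$. That argument is sound, and the direct identity $\log p_0+\log(1-p_0)=\theta_0(\alpha_1+\beta_2)\log a$ confirms it.

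Your version avoids the transition machinery and the continuity of $M$ entirely. It gives uniqueness of the minimizer from strict monotonicity, and it explains \emph{why} $f_1$ and $f_2$ cross at $p_0$. The paper's version buys uniformity: it is one instance of the general algorithm, and that is what extends to $L>2$ IFSs.

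Treating (iii) directly is also the better choice. The paper's argument for (ii) goes through Case~1b, which presupposes $\alpha_N\neq\beta_N$. Moreover, when $\beta_2=\alpha_2$ one has $p_0=1/2$, not $p_0>1/2$ as stated. To close (ii) in that boundary case, just note that there $\theta_0=2\log 2/((\alpha_1+\alpha_2)|\log a|)$.

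One harmless slip: in (i) with $\alpha_1=\beta_1$, the tie at $1/2$ is $f_0(1/2)=f_1(1/2)$, because $T_0=T_1$. It is not $f_{N+1}(1/2)=f_2(1/2)$, as the paper's lemma on $M(1/2)$ is phrased. You only use $M(1/2)=f_N(1/2)$, so nothing is affected.
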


\begin{proof}
(i) Here we have $N=1$ so \ $M(1/2)=f_{1}(1/2)$. Furthermore, as $S_{1}=1/2$%
, $f_{1}$ changes from decreasing to increasing at $1/2$. Thus $%
f_{1}(1/2)\leq f_{1}(p)$ for all $p$ so that $\min_{p}M(p)=f_{1}(1/2)$.

(ii) In this case, $N=2$ and $S_{2}=1$. Thus $f_{2}$ is decreasing on $(0,1)$
and $f_{1}$ increases on $[1/2,1)$. Hence Case 1b of the algorithm shows $%
\min_{p}M(p)=f_{2}(p^{\prime })$ where $p^{\prime }>1/2$ is the minimal $p$
where $f_{2}$ right transitions to $f_{1}$ at $p$. Of course we must have $%
f_{1}(p)=f_{2}(p)$ at the transition point $p$. But there can only be one
such choice of $p>1/2$ since $(f_{1}-f_{2})^{\prime }>0$ on $(1/2,1)$ and it
is a routine calculation to verify that $f_{1}(p_{0})=f_{2}(p_{0})$. Thus 
\begin{eqnarray*}
\min_{p}M(p) &=&M(p_{0})=f_{2}(p_{0})=\frac{2\log p_{0}}{(\alpha _{1}+\alpha
_{2})\log a} \\
&=&\frac{2\log (1+a^{(\beta _{2}-\alpha _{2})\theta _{0}})}{(\alpha
_{1}+\alpha _{2})\left\vert \log a\right\vert }=\theta _{0}.
\end{eqnarray*}

(iii) is immediate from (ii).
\end{proof}

\begin{remark}
We made the special assumption that $(\beta _{i}-\alpha _{i})$ was strictly
decreasing. This was essentially a notational convenience for the algorithm.
There is clearly no loss of generality in assuming the sequence is decreasing. 
But if, for example, we have $\beta _{j+1}-\alpha _{j+1}=\beta _{j}-\alpha _{j}$,
then the interval 
\begin{equation*}
\left( \frac{1}{1+a^{(\beta _{j+1}-\alpha _{j+1})\theta }},\frac{1}{%
1+a^{(\beta _{j}-\alpha _{j})\theta }}\right]
\end{equation*}%
is empty and consequently the function $f_{j}(p)$ is not present in the
definition of $G$. In Lemma \ref{transition} we saw that $M$ will transition
from $f_{i}$ to one of the functions $f_{j}$ with an adjacent domain. In the
strictly decreasing case these are the functions $f_{j}$ with $j=i\pm 1$.
But if, say, $f_{i+1}$ is not present, then the functions with domain
adjacent to that of $f_{i}$ will be $f_{i-1}$ and $f_{i+2}$ (assuming these
are both present). This notational modification is the key change one needs
to make to the algorithm when we do not have the strictly increasing
property.
\end{remark}

\subsection*{Acknowledgments}

The authors would like to thank an anonymous referee from the previous paper \cite{HM2} whose feedback first led them on the path to investigate the topics contained in this paper.

\end{document}